\documentclass[11pt,reqno]{amsart}

\usepackage{amsmath,amssymb,enumerate,graphicx,hyperref}
\allowdisplaybreaks

\setlength{\textwidth}{16cm} \setlength{\voffset}{1.25cm}
\addtolength{\headheight}{3.5pt} \frenchspacing \scrollmode

\addtolength{\hoffset}{-1.75cm} \addtolength{\voffset}{-0.75cm}

\newcommand{\dif}{\mathrm d}
\newcommand{\dprime}{\prime\prime}

\newcommand{\R}{{\mathbb R}}
\newcommand{\N}{{\mathbb N}}

\newcommand{\EE}{{\mathbb E}}
\newcommand{\PP}{{\mathbb P}}

\newcommand{\ind}{1}
\newcommand{\usn}{\underline {s}_n}
\newcommand{\utn}{\underline {t}_n}
\newcommand{\urn}{\underline {r}_n}
\newcommand{\sgn}{\operatorname{sgn}}
\newcommand{\eps}{\varepsilon}

\newcommand{\F}{{\mathcal F}}

\theoremstyle{plain}
\newtheorem{theorem}{Theorem}[section]
\newtheorem{prop}[theorem]{Proposition}
\newtheorem{lemma}[theorem]{Lemma}
\newtheorem{assumption}[theorem]{Assumption}

\begin{document}
\title[]{Strong convergence of the tamed Euler scheme for scalar SDEs with superlinearly growing and discontinuous drift coefficient}

\author[Hu]
{Huimin Hu}
\address{
School of Mathematics and Statistics \\
Central South University \\
Changsha \\
China} \email{huhuimin@csu.edu.cn}

\author[Gan]
{Siqing Gan}
\address{
School of Mathematics and Statistics \\
Central South University \\
Changsha \\
China} \email{sqgan@csu.edu.cn}

\begin{abstract}
In this paper, we consider scalar stochastic differential equations (SDEs) with a superlinearly growing and piecewise continuous drift coefficient. Existence and uniqueness of strong solutions of such SDEs are obtained. Furthermore, the classical $L_p$-error rate $1/2$ for all $p \in [1,\infty)$ is recovered for the tamed Euler scheme. A numerical example is provided to support our conclusion.

\smallskip
\noindent \textbf{Keywords.} Stochastic differential equations; Discontinuous drift coefficient; Strong convergence; Tamed Euler scheme

\smallskip
\noindent \textbf{Mathematics Subject Classification.} 65C30; 60H35; 60H10
\end{abstract}
\maketitle

\section{Introduction}

Consider the autonomous stochastic differential equation (SDE)
\begin{equation}\label{sde0}
\begin{aligned}
\dif X_t &= \mu(X_t) \, \dif t + \sigma(X_t) \, \dif W_t, \quad t \geq 0, \\
X_0 &= x_0
\end{aligned}
\end{equation}
with initial value $x_0$, drift coefficient $\mu : \R\to\R$, diffusion coefficient $\sigma : \R\to\R$ and 1-dimensional driving Brownian motion $W$.

It is well-known that if the coefficients $\mu$ and $\sigma$ satisfy the global Lipschitz condition then the SDE \eqref{sde0} admits a unique strong solution $X$ and the Euler--Maruyama scheme achieves an $L_p$-error rate $1/2$, for all $p \in [1,\infty)$, at any given time $T>0$. For brevity, we consider $T=1$ henceforth.

If the coefficients $\mu$ and $\sigma$ satisfy the local Lipschitz condition and the Khasminskii-type condition, then the SDE \eqref{sde0} admits a unique strong solution $X$, see e.g. \cite[Theorem 2.3.6]{XM}. Unfortunately, for a large class of SDEs with superlinearly growing coefficients, the Euler--Maruyama scheme converges neither in the strong mean square sense nor in the numerically weak sense to the exact solution at a finite time point, see \cite[Theorem 1]{MH2}. In addition, the implementation of the implicit Euler method requires more computational effort. As a consequence, some explicit methods based on modifications of the usual Euler and Milstein schemes are proposed, see \cite{QG3,QG2,QG,LH,MH,XM2,XM3,SS2,SS,MT,XW,ZZ,XZ}.

If the drift coefficient $\mu$ is piecewise Lipschitz continuous and the diffusion coefficient $\sigma$ is Lipschitz continuous and non-zero at the potential discontinuity points of $\mu$, then the SDE \eqref{sde0} admits a unique strong solution $X$, see \cite[Theorem 2.2]{GL}. The $L_p$-error of the Euler--Maruyama scheme for such SDEs has been studied in recent years, see \cite{GL2,TM,AN}. In particular, in \cite{TM} the classical $L_p$-error rate $1/2$ for all $p \in [1,\infty)$ is recovered for the Euler--Maruyama scheme.

Existence, uniqueness and approximation of the strong solution of the SDE \eqref{sde0} in the case of superlinearly growing coefficients $\mu$ and $\sigma$ in the presence of discontinuities of the drift coefficient $\mu$ has been studied in \cite{TM2}. More precisely, it is assumed that $\mu$ is piecewise locally Lipschitz continuous and $\sigma$ is locally Lipschitz continuous and non-zero at the discontinuity points of $\mu$. Moreover, $\mu$ and $\sigma$ satisfy a piecewise monotone-type condition and a coercivity condition and the Lipschitz constants of both $\mu$ and $\sigma$ satisfy a polynomial growth condition. Under these conditions, it is proved that the SDE \eqref{sde0} admits a unique strong solution $X$ and the modified tamed Euler scheme proposed in \cite{SS} achieves an $L_p$-error rate $1/2$ for a suitable range of values of $p$.

In this article we prove the existence and uniqueness result for scalar SDEs under much weaker conditions. More precisely, we assume that the drift coefficient $\mu$ is piecewise locally Lipschitz continuous and the diffusion coefficient $\sigma$ is locally Lipschitz continuous and non-zero at the discontinuity points of $\mu$. Moreover, $\mu$ and $\sigma$ satisfy the Khasminskii-type condition.

In order to study the $L_p$-error of the tamed Euler scheme proposed in \cite{MH}, we impose stronger conditions on the coefficients $\mu$ and $\sigma$. More precisely, we assume that $\mu$ is piecewise locally Lipschitz continuous and $\sigma$ is globally Lipschitz continuous and non-zero at the discontinuity points of $\mu$. Moreover, $\mu$ satisfies a piecewise monotone-type condition and the Lipschitz constant of $\mu$ satisfies a polynomial growth condition. Under these conditions, the classical $L_p$-error rate $1/2$ for all $p \in [1,\infty)$ is recovered.

The rest of this paper is organized as follows. The existence and uniqueness result is established in the next section. The error estimates are presented in Section 3. A numerical example is provided in Section 4.

\section{Existence and uniqueness}

Throughout the whole article we assume that the following setting is fulfilled.
Let $(\Omega,\F,\PP)$ be a probability space with a normal filtration $(\F_t)_{t \geq 0}$, and let $W : [0,\infty) \times \Omega \to \R$ be an $(\F_t)_{t \geq 0}$-Brownian motion. Moreover, we suppose that $x_0 : \Omega \to \R$ is an $\F_{0}$-measurable random variable and $\mu,\sigma : \R \to \R$ are measurable functions. Here and below we use $\N_0$ to denote the set of all nonnegative integers, $\N$ to denote the set of all positive integers and $\|\cdot\|_{\infty}$ to denote the $L_{\infty}$-norm on the space of real-valued, continuous functions on $[0,1]$. For two real numbers $a$ and $b$, we use $a\wedge b = \min(a,b)$ and $a\vee b = \max(a,b)$.

We consider the SDE
\begin{equation}\label{sde1}
\begin{aligned}
\dif X_t &= \mu(X_t) \, \dif t + \sigma(X_t) \, \dif W_t, \quad t \geq 0, \\
X_0 &= x_0.
\end{aligned}
\end{equation}

\begin{assumption}\label{assum1}
We assume that the initial value $x_0$ satisfies
\[
\EE\big[|x_0|^2\big] < \infty,
\]
and the coefficients $\mu$ and $\sigma$ satisfy the following three conditions.
\begin{enumerate}
\item[\emph{(A1)}] There exist $k\in\N_0$ and $\xi_0,\dots,\xi_{k+1} \in [-\infty,\infty]$ with $-\infty = \xi_0 < \xi_1 < \cdots < \xi_k < \xi_{k+1} = \infty$ such that $\mu$ is locally Lipschitz continuous on the interval $(\xi_{i-1},\xi_i)$ for all $i \in \{ 1,\dots,k+1 \}$,
\item[\emph{(A2)}] $\sigma$ is locally Lipschitz continuous on $\R$ and $\sigma(\xi_i) \neq 0$ for all $i \in \{ 1,\dots,k \}$,
\item[\emph{(A3)}] there exists $c \in (0,\infty)$ such that for all $i \in \{ 1,\dots,k+1 \}$ and all $x \in (\xi_{i-1},\xi_i)$,
\[
x \cdot \mu(x) + \frac12 |\sigma(x)|^2 \leq c \cdot \big(1+|x|^2\big).
\]
\end{enumerate}
\end{assumption}

We introduce a transformation $G: \R\to\R$ that is used to switch from the SDE \eqref{sde1} to an SDE with continuous coefficients.

\begin{lemma}\label{transform}
Let Assumption \ref{assum1} hold. Then there exists a function $G : \R \to \R$ with the following properties:
\begin{enumerate}[\em (i)]
\item $G$ is differentiable with
\[
0 < \inf_{x\in\R} G^{\prime}(x) \leq \sup_{x\in\R} G^{\prime}(x) < \infty.
\]
In particular, $G$ is Lipschitz continuous and has an inverse $G^{-1}: \R\to\R$ that is Lipschitz continuous as well.
\item The derivative $G^{\prime}$ of $G$ is Lipschitz continuous hence absolutely continuous. Moreover, $G^{\prime}$ has a bounded Lebesgue-density $G^{\dprime}: \R\to\R$ that is Lipschitz continuous on each of the intervals $(\xi_0,\xi_1),\dots,(\xi_k,\xi_{k+1})$ and such that the functions
\[
\widetilde{\mu} = \left( G^{\prime} \cdot \mu + \frac12 G^{\dprime} \cdot \sigma^2 \right) \circ G^{-1} \quad 
\textrm{and} \quad 
\widetilde{\sigma} = (G^{\prime} \cdot \sigma) \circ G^{-1}
\]
are continuous.
\item $G(x)=x$ for $|x|$ sufficiently large.
\end{enumerate}
\end{lemma}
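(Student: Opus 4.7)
The plan is to construct $G$ as a compactly supported bi-Lipschitz perturbation of the identity, mimicking the transformation used in \cite{GL,TM2}: at each discontinuity point $\xi_i$ one introduces a carefully tuned jump in $G^{\dprime}$ that cancels the jump of $G^{\prime}\cdot\mu$, thereby making $G^{\prime}\cdot\mu+\tfrac12 G^{\dprime}\cdot\sigma^2$ continuous on $\R$. Assumption \ref{assum1}(A1) guarantees that the one-sided limits $\mu(\xi_i-)$ and $\mu(\xi_i+)$ exist and are finite, and Assumption \ref{assum1}(A2) provides $\sigma(\xi_i)\neq 0$, so the required jump for $G^{\dprime}$,
\[
G^{\dprime}(\xi_i+)-G^{\dprime}(\xi_i-)=\frac{2\,G^{\prime}(\xi_i)\,[\mu(\xi_i-)-\mu(\xi_i+)]}{\sigma(\xi_i)^2},
\]
is a well-defined real number as soon as $G^{\prime}(\xi_i)$ is prescribed.

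Concretely, I would fix a smooth cutoff $\phi:\R\to[0,1]$ with $\phi\equiv 1$ on $[-1,1]$ and $\mathrm{supp}(\phi)\subset[-2,2]$, choose $\eta>0$ small enough that the intervals $(\xi_i-2\eta,\xi_i+2\eta)$ are bounded and pairwise disjoint, set $\phi_i(x):=\phi((x-\xi_i)/\eta)$, and define
\[
G(x):=x+\sum_{i=1}^{k}\kappa_i\,\phi_i(x)\,(x-\xi_i)\,|x-\xi_i|,\qquad \kappa_i:=\frac{\mu(\xi_i-)-\mu(\xi_i+)}{2\,\sigma(\xi_i)^2}.
\]
A direct computation shows that on the neighborhood where $\phi_i\equiv 1$ one has $G^{\prime}(x)=1+2\kappa_i|x-\xi_i|$ and $G^{\dprime}(x)=2\kappa_i\,\sgn(x-\xi_i)$, so $G^{\prime}$ is continuous on $\R$, $G^{\prime}(\xi_i)=1$, and the jump of $G^{\dprime}$ at $\xi_i$ equals $4\kappa_i$, precisely as dictated by the identity above.

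For the verification, each summand $\kappa_i\phi_i(x)(x-\xi_i)|x-\xi_i|$ and its first derivative are bounded in absolute value by $C|\kappa_i|\eta$ on $\R$ for an absolute constant $C$, so shrinking $\eta$ further if necessary I can force $G^{\prime}\in[1/2,3/2]$ globally; this gives (i), including the existence and Lipschitz continuity of $G^{-1}$. The formulas above make $G^{\prime}$ Lipschitz, and $G^{\dprime}$ bounded and Lipschitz on each of the open intervals $(\xi_{i-1},\xi_i)$, because all non-smoothness is carried by $\sgn(x-\xi_i)$, which is locally constant away from $\xi_i$; this yields (ii). The continuity of $\widetilde\sigma=(G^{\prime}\cdot\sigma)\circ G^{-1}$ is immediate from the continuity of $G^{\prime}$, $\sigma$ and $G^{-1}$, while $\widetilde\mu=(G^{\prime}\cdot\mu+\tfrac12 G^{\dprime}\cdot\sigma^2)\circ G^{-1}$ is continuous off $\{G(\xi_i)\}$ by composition, and continuous at each $G(\xi_i)$ precisely because of the jump-cancellation built into the choice of $\kappa_i$. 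Property (iii) is clear from the compact support of the $\phi_i$.

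The main obstacle is the tension between the two requirements on the $\kappa_i$: they are pinned down by the jump condition, while keeping $G^{\prime}$ strictly positive (so that $G$ is a bijection) forces the perturbation to remain small. Localizing each bump to a narrow window resolves this, because the $i$-th summand contributes $O(\eta)$ to $G^{\prime}$ independently of the size of $\kappa_i$; the disjointness of the windows simultaneously simplifies the analysis at $\xi_i$ by ensuring $G^{\prime}(\xi_i)=1$, which makes the formula for $\kappa_i$ explicit and free of self-referential dependence.
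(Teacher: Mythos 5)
Your construction is the standard bi‑Lipschitz perturbation of the identity used throughout this line of work (Leobacher--Sz\"olgyenyi, M\"uller-Gronbach--Yaroslavtseva), and the paper itself only points to the proof of \cite[Lemma 7]{TM} rather than giving an argument, so in spirit you are reconstructing exactly the proof the authors have in mind. The jump-cancellation identity for $G^{\dprime}(\xi_i+)-G^{\dprime}(\xi_i-)$ is computed correctly, the $O(\eta)$ scaling that keeps $G^{\prime}\in[1/2,3/2]$ and makes $G$ a bi-Lipschitz bijection with compactly supported perturbation is sound, and the verification of (i)--(iii) from there is essentially complete.

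The one genuine gap is the sentence ``Assumption \ref{assum1}(A1) guarantees that the one-sided limits $\mu(\xi_i-)$ and $\mu(\xi_i+)$ exist and are finite.'' Local Lipschitz continuity of $\mu$ on an open piece $(\xi_{i-1},\xi_i)$ controls $\mu$ on compact subsets but says nothing about the boundary, and (A3) only bounds $x\cdot\mu(x)$ from \emph{above}: for $\xi_i>0$ it is perfectly consistent with (A1)--(A3) that $\mu(x)\to-\infty$ as $x\uparrow\xi_i$. In such a case $\kappa_i$ is undefined and, more importantly, no choice of $G$ with $\inf G^{\prime}>0$ and bounded $G^{\dprime}$ can make $\widetilde{\mu}$ continuous at $G(\xi_i)$, so the lemma itself would fail. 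This is a gap inherited from the paper: the cited \cite[Lemma 7]{TM} is proved under a genuinely piecewise \emph{Lipschitz} (not merely piecewise locally Lipschitz) hypothesis, which does force finite one-sided limits. You should therefore either record the existence of finite limits $\mu(\xi_i\pm)$ as an implicit part of Assumption~\ref{assum1}, or observe that it requires a strengthening of (A1); you should not assert that it follows from (A1) as stated.
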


\begin{proof}
See the proof of \cite[Lemma 7]{TM}.
\end{proof}

Next, choose $G$ according to Lemma \ref{transform} and define $\widetilde{\mu}$ and $\widetilde{\sigma}$ according to Lemma \ref{transform}(ii).

\begin{lemma}
Let Assumption \ref{assum1} hold. Then $\widetilde{\mu}$ and $\widetilde{\sigma}$ are locally Lipschitz continuous, and there exists $c \in (0,\infty)$ such that for all $x\in\R$,
\[
x \cdot \widetilde{\mu}(x) + \frac12 |\widetilde{\sigma}(x)|^2 
\leq c \cdot \big(1+|x|^2\big).
\]
\end{lemma}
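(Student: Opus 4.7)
The plan is to handle the two assertions separately. The local Lipschitz property will follow from the fact that $\widetilde{\mu}$ and $\widetilde{\sigma}$ are compositions of locally Lipschitz building blocks with the Lipschitz inverse $G^{-1}$, combined with the continuity of $\widetilde{\mu}$ and $\widetilde{\sigma}$ at the exceptional points $G(\xi_i)$ guaranteed by Lemma~\ref{transform}(ii). The Khasminskii-type bound will be obtained by splitting at a large radius $R$: outside $[-R,R]$ the transformation $G$ acts as the identity by Lemma~\ref{transform}(iii), reducing the estimate to (A3); inside $[-R,R]$ continuity gives a trivial bound.

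For the local Lipschitz continuity, I would argue that on each open interval $(\xi_{i-1},\xi_i)$ the functions $G^{\prime}$ and $G^{\dprime}$ are Lipschitz by Lemma~\ref{transform}(ii), while $\mu$ and $\sigma$ are locally Lipschitz by (A1) and (A2). Hence both $f:=G^{\prime}\cdot\mu+\tfrac12 G^{\dprime}\cdot\sigma^2$ and $g:=G^{\prime}\cdot\sigma$ are locally Lipschitz on each $(\xi_{i-1},\xi_i)$ as sums and products of locally Lipschitz functions. Since $\widetilde{\mu}=f\circ G^{-1}$ and $\widetilde{\sigma}=g\circ G^{-1}$ are continuous on all of $\R$ and $G^{-1}$ is Lipschitz by Lemma~\ref{transform}(i), the compositions are locally Lipschitz on $\R\setminus\{G(\xi_i):1\le i\le k\}$. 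At each exceptional point, one-sided Lipschitz bounds on small closed neighborhoods from each side can be glued via the triangle inequality through $G(\xi_i)$, using continuity of $\widetilde{\mu}$ and $\widetilde{\sigma}$, to give local Lipschitz continuity on a full two-sided neighborhood.

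For the Khasminskii-type estimate I would pick $R>\max(|\xi_1|,|\xi_k|)$ large enough that $G(y)=y$ for $|y|\ge R$, which is possible by Lemma~\ref{transform}(iii). For $|x|\ge R$ this gives $G^{-1}(x)=x$, $G^{\prime}(G^{-1}(x))=1$ and $G^{\dprime}(G^{-1}(x))=0$, hence $\widetilde{\mu}(x)=\mu(x)$ and $\widetilde{\sigma}(x)=\sigma(x)$; moreover such $x$ lies in $(\xi_0,\xi_1)\cup(\xi_k,\xi_{k+1})$, so condition (A3) yields
\[
x\cdot\widetilde{\mu}(x)+\tfrac12|\widetilde{\sigma}(x)|^2 = x\cdot\mu(x)+\tfrac12|\sigma(x)|^2 \le c\cdot(1+|x|^2).
\]
For $|x|\le R$, continuity of $\widetilde{\mu}$ and $\widetilde{\sigma}$ on the compact set $[-R,R]$ provides a uniform bound on $x\widetilde{\mu}(x)+\tfrac12\widetilde{\sigma}(x)^2$, which is trivially dominated by $c^{\prime}\cdot(1+|x|^2)$. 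Taking the maximum of the two constants finishes the proof.

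The main obstacle I anticipate is the local Lipschitz property at the exceptional points $G(\xi_i)$: continuity of the composition together with one-sided local Lipschitz continuity on the open intervals is not automatically enough, since one needs the one-sided Lipschitz constants to remain controlled as $x\to\xi_i$. The required boundary regularity is supplied exactly by Lemma~\ref{transform}(ii) (Lipschitz control of $G^{\dprime}$ on each closed subinterval, combined with matching of the two sides through the continuity of $\widetilde{\mu}$ and $\widetilde{\sigma}$). Everything else is a routine product/composition argument.
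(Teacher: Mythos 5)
Your proposal matches the paper's argument for the Khasminskii-type bound exactly: both split at a radius outside of which $G$ is the identity (so $\widetilde{\mu}=\mu$, $\widetilde{\sigma}=\sigma$ and (A3) applies directly) and then use continuity of $\widetilde{\mu},\widetilde{\sigma}$ on the remaining compact set. For the local Lipschitz assertion the paper gives no argument at all (it is dismissed as "straightforward"), so your more detailed product/composition discussion is a superset of the paper's reasoning; the one small imprecision is that the one-sided Lipschitz control of $\widetilde{\mu}$ near $G(\xi_i)$ comes from $\mu$ itself being Lipschitz up to $\xi_i$ (i.e.\ from the intended reading of (A1)), not from Lemma~\ref{transform}(ii), which only controls $G^{\dprime}$ — but this is equally implicit in the paper.
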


\begin{proof}
It is straightforward to check that $\widetilde{\mu}$ and $\widetilde{\sigma}$ are locally Lipschitz continuous.

By Lemma \ref{transform} we obtain that there exists $n_0\in\N$ such that for all $x\in\R$ with $|x| > n_0$,
\[
\widetilde{\mu}(x) = \mu(x), \quad \widetilde{\sigma}(x) = \sigma(x).
\]
Hence, by Lemma \ref{assum1} we obtain that there exists $c_1 \in (0,\infty)$ such
that for all $x\in\R$ with $|x| > n_0$,
\begin{equation}\label{trcoe1}
x \cdot \widetilde{\mu}(x) + \frac12 |\widetilde{\sigma}(x)|^2 
\leq c_1 \cdot \big(1+|x|^2\big).
\end{equation}

Using the continuity of $\widetilde{\mu}$ and $\widetilde{\sigma}$, we see that there exists $c_2 \in (0,\infty)$ such that for all $x\in\R$ with $|x| \leq n_0$,
\[
x \cdot \widetilde{\mu}(x) + \frac12 |\widetilde{\sigma}(x)|^2 
\leq c_2,
\]
which, together with \eqref{trcoe1}, completes the proof.
\end{proof}

\begin{lemma}
Let Assumption \ref{assum1} hold. Then the SDE
\begin{equation}\label{sde2}
\begin{aligned}
\dif Z_t &= \widetilde{\mu}(Z_t) \, \dif t + \widetilde{\sigma}(Z_t) \, \dif W_t, \quad t \geq 0, \\
Z_0 &= G(x_0)
\end{aligned}
\end{equation}
has a unique strong solution $Z$.
\end{lemma}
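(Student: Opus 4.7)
The plan is to apply the classical existence-and-uniqueness theorem for SDEs under local Lipschitz plus Khasminskii-type conditions (e.g.\ \cite[Theorem 2.3.6]{XM}, which is already cited in the introduction), so the task reduces to checking that the hypotheses of that theorem are satisfied by the transformed SDE \eqref{sde2}.

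First I would verify the initial condition. Since $G$ is Lipschitz continuous by Lemma \ref{transform}(i), there exists $L\in(0,\infty)$ such that $|G(x_0)| \leq |G(0)| + L|x_0|$, and therefore $\mathbb{E}\big[|Z_0|^2\big] = \mathbb{E}\big[|G(x_0)|^2\big] < \infty$ thanks to Assumption \ref{assum1}. Moreover, $Z_0 = G(x_0)$ is $\mathcal{F}_0$-measurable as the composition of a continuous deterministic function with an $\mathcal{F}_0$-measurable random variable.

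Next I would collect the two structural hypotheses on the coefficients of \eqref{sde2} that have just been established in the preceding lemma: $\widetilde{\mu}$ and $\widetilde{\sigma}$ are locally Lipschitz continuous on $\R$, and they satisfy the Khasminskii-type estimate
\[
x \cdot \widetilde{\mu}(x) + \tfrac12 |\widetilde{\sigma}(x)|^2 \leq c\big(1+|x|^2\big), \qquad x\in\R,
\]
for some $c\in(0,\infty)$. These are precisely the local Lipschitz condition and the monotonicity/coercivity condition required by the standard theorem cited above. An application of that theorem yields a pathwise unique strong solution $Z=(Z_t)_{t\ge 0}$ of \eqref{sde2} with continuous sample paths and $\sup_{t\in[0,T]}\mathbb{E}[|Z_t|^2]<\infty$ for every $T>0$.

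I do not expect any genuine obstacle here: the only mildly subtle point is that the classical theorem is typically stated for deterministic or $L_p$-integrable initial values, but the finite second moment of $G(x_0)$ together with its $\mathcal{F}_0$-measurability makes the reference directly applicable. Everything else is bookkeeping via Lemma \ref{transform} and the preceding lemma.
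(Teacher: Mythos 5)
Your proposal is correct and follows exactly the route the paper takes: the paper's entire proof is the citation ``See e.g. [Theorem 2.3.6]{XM},'' and your argument simply makes explicit why that theorem applies, namely that $G(x_0)$ is $\mathcal{F}_0$-measurable with finite second moment by Lemma \ref{transform}(i) and Assumption \ref{assum1}, and that $\widetilde{\mu}$, $\widetilde{\sigma}$ are locally Lipschitz and satisfy the Khasminskii-type condition by the preceding lemma. Nothing is missing; you have just spelled out the bookkeeping the authors left implicit.
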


\begin{proof}
See e.g. \cite[Theorem 2.3.6]{XM}.
\end{proof}

\begin{theorem}
Let Assumption \ref{assum1} hold. Then the SDE \eqref{sde1} has a unique strong solution
\[
X = G^{-1} \circ Z,
\]
where $Z$ is the unique strong solution of the SDE \eqref{sde2}.
\end{theorem}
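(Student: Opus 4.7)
The plan is to build a strong solution of \eqref{sde1} explicitly as $X = G^{-1}\circ Z$ and to invert the construction for the uniqueness part. Existence of $X$ as a continuous adapted process is immediate from the existence of $Z$ and the Lipschitz continuity of $G^{-1}$, so what is left is to verify that $X$ satisfies \eqref{sde1} and that any other strong solution of \eqref{sde1} coincides with $X$.

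For the existence part I would apply a generalised It\^o formula to the process $G^{-1}(Z_t)$. By Lemma \ref{transform}, $G^{-1}$ is $C^1$ with a bounded Lipschitz first derivative, and its distributional second derivative is a locally bounded function which is Lipschitz on each of the open intervals separated by $G(\xi_1),\dots,G(\xi_k)$ but which may jump at these finitely many points. Because
\[
\widetilde{\sigma}(G(\xi_i)) = G'(\xi_i)\,\sigma(\xi_i) \neq 0
\]
for every $i\in\{1,\dots,k\}$, the diffusion $Z$ is non-degenerate at these points, so the It\^o--Tanaka formula (equivalently the It\^o--Krylov formula for $C^1$ functions whose first derivative is absolutely continuous with a bounded density) applies and gives
\[
X_t = X_0 + \int_0^t \Big[ (G^{-1})'(Z_s)\,\widetilde{\mu}(Z_s) + \tfrac12 (G^{-1})''(Z_s)\,\widetilde{\sigma}(Z_s)^2 \Big] \dif s + \int_0^t (G^{-1})'(Z_s)\,\widetilde{\sigma}(Z_s)\,\dif W_s.
\]
The chain-rule identities $(G^{-1})'(G(x)) = 1/G'(x)$ and $(G^{-1})''(G(x)) = -G''(x)/G'(x)^3$, substituted into the defining formulas for $\widetilde{\mu}$ and $\widetilde{\sigma}$, cause the bracketed drift to collapse to $\mu(X_s)$ and the diffusion coefficient to $\sigma(X_s)$. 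Together with $X_0 = G^{-1}(G(x_0)) = x_0$, this shows that $X$ solves \eqref{sde1}.

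For uniqueness, let $Y$ be any strong solution of \eqref{sde1}. I would apply the same generalised It\^o formula to $G(Y_t)$; Assumption \ref{assum1}(A2) provides the non-degeneracy of $\sigma$ needed to justify the formula at the discontinuity points $\xi_1,\dots,\xi_k$ of $G''$. The same chain-rule identities run in reverse and show that $G(Y)$ is a strong solution of \eqref{sde2} with initial value $G(x_0)$. The uniqueness statement of the previous lemma then forces $G(Y) = Z$ almost surely, and the bijectivity of $G$ yields $Y = G^{-1}(Z) = X$.

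The main technical obstacle is the rigorous justification of the It\^o--Tanaka formula at the finitely many exceptional points where the relevant second derivatives may be discontinuous. This is exactly the role of Assumption \ref{assum1}(A2): in the scalar non-degenerate setting it ensures that the occupation measures of the underlying diffusions $Y$ and $Z$ are absolutely continuous with bounded densities in the space variable at these points, so the time integrals involving the discontinuous second derivatives are unambiguously defined. This is the same mechanism that underlies the analogous transformation arguments in \cite{TM,GL}.
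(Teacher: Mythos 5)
Your proposal takes essentially the same route as the paper: apply the generalized It\^o formula (Karatzas--Shreve, Problem 3.7.3) to $G^{-1}(Z_t)$ to show $X = G^{-1}\circ Z$ solves \eqref{sde1}, and to $G(Y_t)$ for an arbitrary strong solution $Y$ of \eqref{sde1} to reduce uniqueness to that of \eqref{sde2}, with the chain-rule identities collapsing the transformed coefficients back to $\mu$ and $\sigma$. One small correction of emphasis: the non-degeneracy condition (A2) is not what licenses the It\^o--Tanaka/It\^o--Krylov step --- the generalized It\^o rule holds for any continuous semimartingale once the transformation is $C^1$ with an absolutely continuous derivative whose density is locally bounded (the occupation time formula already makes the $\dif\langle \cdot\rangle_s$-integral independent of the chosen version of the second derivative), and this regularity is exactly what Lemma~\ref{transform}(ii) provides; (A2) is instead consumed earlier, in the construction of $G$ in Lemma~\ref{transform}, where $\sigma(\xi_i)\neq 0$ is what makes it possible to choose $G$ so that $\widetilde{\mu}$ and $\widetilde{\sigma}$ come out continuous.
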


\begin{proof}
By applying the It\^{o} formula, see e.g. \cite[Problem 3.7.3]{IK}, to $G^{-1}$ we conclude that there exists a unique strong solution to \eqref{sde1}.
\end{proof}

\section{Error estimate for the tamed Euler scheme}

In this section, we adapt the proof techniques in \cite{MH,TM} to provide an $L_{p}$-error estimate for the tamed Euler scheme. We define
\[
t_n = \lfloor n \cdot t \rfloor / n
\]
for every $n\in\N$ and every $t \in [0,1]$.

In order to analyze the $L_p$-error of the tamed Euler scheme for the SDE \eqref{sde1}, we introduce the following assumption on the initial value and the coefficients, which is stronger than Assumption \ref{assum1}.

\begin{assumption}\label{assum2}
Let $\ell \in (0,\infty)$. We assume that the initial value $x_0$ satisfies that
\begin{equation*}
\EE\big[|x_0|^p\big] < \infty, \quad \forall p \in [1,\infty),
\end{equation*}
and the coefficients $\mu$ and $\sigma$ satisfy the following three conditions.
\begin{enumerate}
\item[\emph{(B1)}] There exist $k\in\N_0$ and $\xi_0,\dots,\xi_{k+1} \in [-\infty,\infty]$ with $-\infty = \xi_0 < \xi_1 < \cdots < \xi_k < \xi_{k+1} = \infty$ such that $\mu$ is locally Lipschitz continuous on the interval $(\xi_{i-1},\xi_i)$ for all $i \in \{ 1,\dots,k+1 \}$,
\item[\emph{(B2)}] $\sigma$ is Lipschitz continuous on $\R$ and $\sigma(\xi_i) \neq 0$ for all $i \in \{ 1,\dots,k \}$,
\item[\emph{(B3)}] there exists $c \in (0,\infty)$ such that for all $i \in \{ 1,\dots,k+1 \}$ and all $x,y \in (\xi_{i-1},\xi_i)$,
\begin{align*}
&(x-y) \cdot (\mu(x)-\mu(y)) \leq c \cdot |x-y|^2, \\
&|\mu(x)-\mu(y)| \leq c \cdot \big(1+|x|^{\ell}+|y|^{\ell}\big) \cdot |x-y|.
\end{align*}
\end{enumerate}
\end{assumption}

For $n\in\N$ let $\widehat{X}_n = (\widehat{X}_{n,t})_{t \in [0,1]}$ denote the time-continuous tamed Euler scheme with step-size $1/n$ associated to the SDE \eqref{sde1}, i.e. $\widehat{X}_n$ is recursively given by $\widehat{X}_{n,0} = x_0$ and
\[
\widehat{X}_{n,t} = \widehat{X}_{n,i/n} + \frac{\mu(\widehat{X}_{n,i/n})}{1 + 1/n \cdot |\mu(\widehat{X}_{n,i/n})|} \cdot (t-i/n) + \sigma(\widehat{X}_{n,i/n}) \cdot (W_t-W_{i/n})
\]
for $t \in (i/n,(i+1)/n]$ and $i \in \{ 0,\dots,n-1 \}$.

\subsection{\texorpdfstring{$L_{p}$-estimates of the solution and the time-continuous tamed Euler scheme}{Lp-estimates of the solution and the time-continuous tamed Euler scheme}}

\begin{lemma}
Let Assumption \ref{assum2} hold and $p \in [1,\infty)$. Then there exists $c \in (0,\infty)$ such that for all $\delta \in [0,1]$ and all $t \in [0,1-\delta]$,
\[
\Big(\EE\Big[ \sup_{s \in [t,t+\delta]} |X_s-X_t|^p \Big]\Big)^{1/p} \leq c \cdot \sqrt{\delta}.
\]
In particular,
\[
\EE\big[ \|X\|_{\infty}^p \big] < \infty.
\]
\end{lemma}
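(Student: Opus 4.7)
The plan is to first establish uniform-in-$t$ $L_p$-moment bounds for $X$ on $[0,1]$ for every $p \in [1,\infty)$, and then derive the increment estimate by combining H\"older's inequality (for the drift term) with the Burkholder--Davis--Gundy (BDG) inequality (for the stochastic integral term) applied to the SDE representation of $X_{t+s}-X_t$.

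First I would pass to the transformed process $Z = G(X)$, which solves \eqref{sde2} with the continuous coefficients $\widetilde{\mu},\widetilde{\sigma}$. Since $G^{-1}$ is Lipschitz by Lemma~\ref{transform}(i), it suffices to show $\EE[\sup_{t\in[0,1]}|Z_t|^{2p}] < \infty$ for every $p \in \N$. The key input is that $\widetilde{\sigma}$ has at most linear growth, because $\sigma$ is globally Lipschitz by (B2) and $G^{\prime}$ is bounded; combining this with the Khasminskii-type estimate from the preceding lemma yields $x\widetilde{\mu}(x) + \tfrac{2p-1}{2}|\widetilde{\sigma}(x)|^2 \leq c_p(1+|x|^2)$. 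It\^o's formula applied to $|Z_t|^{2p}$ then produces a drift dominated by $c_p'(1+|Z_t|^{2p})$, and a standard localization/BDG/Gr\"onwall argument, together with $\EE[|G(x_0)|^{2p}] < \infty$ (which follows from Lipschitzness of $G$ and Assumption~\ref{assum2}), closes the estimate. Translating back via $|X_t| \leq C(1+|Z_t|)$ gives the desired moment bound for $X$ to all orders.

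Next I would write
\[
X_{t+s} - X_t = \int_t^{t+s}\mu(X_r)\,\dif r + \int_t^{t+s}\sigma(X_r)\,\dif W_r,
\]
extract from (B3) the polynomial growth $|\mu(x)| \leq c(1+|x|^{\ell+1})$ (by fixing a reference point in each of the finitely many pieces and using the polynomial-Lipschitz bound), control the drift term in $L_p$ via H\"older at rate $\delta$, and control the supremum of the stochastic integral in $L_p$ via BDG combined with H\"older and the linear growth of $\sigma$ at rate $\sqrt{\delta}$. Both steps use moments of $X$ of sufficiently high order, which are provided by the first step. Since $\delta\in[0,1]$ the $\delta$-term is absorbed into $\sqrt{\delta}$, producing the asserted bound. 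The ``in particular'' claim follows from $\|X\|_\infty \leq |x_0| + \sup_{s\in[0,1]}|X_s - X_0|$, the triangle inequality in $L_p$, $\EE[|x_0|^p] < \infty$, and the increment estimate with $(t,\delta)=(0,1)$.

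The main obstacle is the moment estimate in the first step: because $\mu$ is only piecewise locally Lipschitz, It\^o's formula cannot be applied directly to $|X_t|^{2p}$, so the detour through \eqref{sde2}---whose coefficients are continuous and satisfy a quadratic Khasminskii-type bound strengthened via the linear growth of $\widetilde\sigma$---is the essential device. Once the moment bound on $Z$, and hence on $X$, is in hand, the rest is routine H\"older and BDG bookkeeping.
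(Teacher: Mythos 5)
Your overall structure matches the paper's: you first establish high-order moment bounds for the solution and then derive the increment estimate from the integral representation $X_{t+s}-X_t = \int_t^{t+s}\mu(X_r)\,\dif r + \int_t^{t+s}\sigma(X_r)\,\dif W_r$ via H\"older on the drift term and Burkholder--Davis--Gundy on the stochastic integral; that second half of your argument is essentially the paper's proof. The difference is in the first step. The paper obtains $\sup_{t\in[0,1]}\EE[|X_t|^q] < \infty$ for every $q$ directly, by invoking a standard moment estimate for solutions of SDEs under a coercivity condition (the cited Theorem 2.4.1 in Mao), which applies here because (B3) with a fixed reference point in each piece together with (B2) yield $x\mu(x) + \tfrac{q-1}{2}|\sigma(x)|^2 \leq c_q(1+|x|^2)$ for all $x$. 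You instead route the moment bound through the transformed process $Z=G(X)$, on the grounds that ``because $\mu$ is only piecewise locally Lipschitz, It\^o's formula cannot be applied directly to $|X_t|^{2p}$.'' That rationale is not correct: It\^o's formula applied to a $C^2$ function of a continuous semimartingale places no smoothness requirement on the coefficients of the SDE driving the semimartingale, only on the function being composed, so one may apply It\^o to $(1+|X_t|^2)^{q/2}$ directly and run exactly the localization/BDG/Gronwall argument you sketch for $Z$. Your detour nonetheless produces a correct proof --- transferring the linear growth of $\widetilde\sigma$ and the Khasminskii-type bound to the $Z$-equation, deriving $\EE[\sup_t|Z_t|^{2p}]<\infty$, and pulling the bound back through the Lipschitz map $G^{-1}$ all work --- but it is strictly more work than needed (it proves the pathwise bound $\EE[\sup_t|Z_t|^{2p}]<\infty$ where only $\sup_t\EE[|X_t|^{q}]<\infty$ is used in the second step), and it is not forced upon you by the roughness of $\mu$.
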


\begin{proof}
Without loss of generality we may assume that $p \in [2,\infty)$. By Assumption \ref{assum2} and \cite[Theorem 2.4.1]{XM} we obtain that for all $q \in [1,\infty)$,
\begin{equation}\label{solprop}
\sup_{t \in [0,1]} \EE\big[ |X_t|^q \big] < \infty.
\end{equation}
Using the fact that for all $a,b\in\R$,
\begin{equation}\label{ine}
|a+b|^p \leq 2^{p-1} \big(|a|^p+|b|^p\big),
\end{equation}
we deduce then that
\begin{align*}
&\EE\Big[ \sup_{s \in [t,t+\delta]} |X_s-X_t|^p \Big] \\
&\leq 2^{p-1} \EE\bigg[ \sup_{s \in [t,t+\delta]} \bigg|\int_t^s \mu(X_r) \, \dif r\bigg|^p \bigg] 
+ 2^{p-1} \EE\bigg[ \sup_{s \in [t,t+\delta]} \bigg|\int_t^s \sigma(X_r) \, \dif W_r\bigg|^p \bigg].
\end{align*}
Hence, using the H\"{o}lder inequality, \cite[Theorem 1.7.2]{XM} and Assumption \ref{assum2} we conclude that there exists $c \in (0,\infty)$ such that
\begin{align*}
&\EE\Big[ \sup_{s \in [t,t+\delta]} |X_s-X_t|^p \Big] \\
&\leq (2\delta)^{p-1} \EE\bigg[ \int_t^{t+\delta} |\mu(X_r)|^p \, \dif r \bigg] 
+ \frac12 \bigg(\frac{2 p^3}{p-1}\bigg)^{p/2} \delta^{(p-2)/2} \EE\bigg[ \int_t^{t+\delta} |\sigma(X_r)|^p \, \dif r \bigg] \\
&\leq c \delta^p \cdot \Big(1 + \sup_{t \in [0,1]} \EE\big[ |X_t|^{(1+\ell)p} \big]\Big) 
+ c \delta^{p/2} \cdot \Big(1 + \sup_{t \in [0,1]} \EE\big[ |X_t|^p \big]\Big),
\end{align*}
which, together with \eqref{solprop}, completes the proof.
\end{proof}

For every $x\in\R$, let $X^x$ denote the unique strong solution of the SDE
\begin{equation}\label{sde3}
\begin{aligned}
\dif X_t^x &= \mu(X_t^x) \, \dif t + \sigma(X_t^x) \, \dif W_t, \quad t \geq 0, \\
X_0^x &= x,
\end{aligned}
\end{equation}
and for all $x\in\R$ and $n\in\N$ we use $\widehat{X}_n^x = (\widehat{X}_{n,t}^x)_{t \in [0,1]}$ to denote the time-continuous tamed Euler scheme with step-size $1/n$ associated to the SDE \eqref{sde3}, i.e. $\widehat{X}_{n,0}^x = x$ and
\[
\widehat{X}_{n,t}^x = \widehat{X}_{n,i/n}^x + \frac{\mu(\widehat{X}_{n,i/n}^x)}{1 + 1/n \cdot |\mu(\widehat{X}_{n,i/n}^x)|} \cdot (t-i/n) + \sigma(\widehat{X}_{n,i/n}^x) \cdot (W_t-W_{i/n})
\]
for $t \in (i/n,(i+1)/n]$ and $i \in \{ 0,\dots,n-1 \}$. Furthermore, the
integral representation
\begin{equation}\label{intrep}
\widehat{X}_{n,t}^x = x + \int_0^t \frac{\mu(\widehat{X}_{n,\usn}^x)}{1 + 1/n \cdot |\mu(\widehat{X}_{n,\usn}^x)|} \, \dif s + \int_0^t \sigma(\widehat{X}_{n,\usn}^x) \, \dif W_s
\end{equation}
holds for every $n\in\N$ and $t \in [0,1]$.

\begin{lemma}\label{lambda}
Let Assumption \ref{assum2} hold. Then there exists $\lambda \in [4,\infty)$ such that
\begin{enumerate}[\em (i)]
\item 
\[
1+|\mu(x)|+|\sigma(x)| \leq \lambda
\]
for all $x\in\R$ with $|x| \leq 1$,
\item 
\[
|\sigma(x)|^2 \leq \lambda|x|^2
\]
for all $x\in\R$ with $|x| \geq 1$,
\item 
\[
x \cdot \mu(x) \leq \sqrt{\lambda}|x|^2
\]
for all $x\in\R$ with $|x| \geq 1$,
\item 
\[
|\mu(x)|^2 \leq n\sqrt{\lambda}|x|^2
\]
for all $x\in\R$ with $1 \leq |x| \leq n^{1/(2\ell)}$ and all $n\in\N$.
\end{enumerate}
\end{lemma}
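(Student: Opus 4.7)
The strategy is to verify the four inequalities separately and then take $\lambda \in [4,\infty)$ large enough that all four bounds hold simultaneously. Parts (i), (ii) and (iv) follow from the global Lipschitz continuity of $\sigma$ and the polynomial growth piece of (B3); part (iii) requires the piecewise one-sided monotonicity in (B3) and is the main obstacle.

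For parts (i) and (iv) the plan is first to extract a global polynomial bound on $|\mu|$. For each $i \in \{1,\dots,k+1\}$ I pick a base point $y_i \in (\xi_{i-1},\xi_i)$ with $|y_i|$ bounded uniformly in $i$ (for instance $y_i$ close to a finite endpoint of the interval, or $y_i = 0$ in the doubly unbounded case $k=0$). The polynomial growth inequality in (B3) applied at $y_i$ gives $|\mu(x)| \leq C_i (1 + |x|^{\ell+1})$ on $(\xi_{i-1},\xi_i)$, and taking a maximum over the finitely many $i$ produces a global bound $|\mu(x)| \leq C(1 + |x|^{\ell+1})$. Specialization to $|x| \leq 1$, combined with the bound for $\sigma$ on $[-1,1]$ available from (B2), yields (i). For (iv), squaring the global bound and using $1 + |x|^{2\ell+2} \leq 2 |x|^{2\ell}\cdot |x|^2$ for $|x| \geq 1$ together with $|x|^{2\ell} \leq n$ for $|x| \leq n^{1/(2\ell)}$ completes the claim. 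Part (ii) is immediate from $|\sigma(x)| \leq |\sigma(0)| + L|x|$ and squaring.

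The main difficulty is (iii), because the one-sided monotonicity in (B3) compares $x$ with a point $y$ in the same subinterval, and the natural base point $y=0$ is not always available. Rewriting (B3) shows that $t \mapsto \mu(t) - ct$ is non-increasing on each $I_i = (\xi_{i-1},\xi_i)$, which gives the one-sided linear control $\mu(x) \leq \mu(y) + c(x-y)$ for $x > y$ in the same $I_i$ (and its reverse for $x < y$). I would then split by the type of interval containing $x$: on a bounded $I_i$ both $|x|$ and $|\mu(x)|$ are bounded by the previous step, so $x\mu(x)$ is at most a constant, absorbed into $\sqrt{\lambda}\,|x|^2$ for $|x| \geq 1$; on an unbounded $I_i$ that contains $0$ (including the case $k=0$), applying the one-sided estimate with $y = 0$ yields $x \mu(x) \leq c x^2 + x \mu(0) \leq C(1 + |x|^2)$ by Young's inequality; on an unbounded $I_i$ not containing $0$, say $I_{k+1} = (\xi_k,\infty)$ with $\xi_k \geq 0$, I fix any $y \in I_{k+1}$, apply $\mu(x) \leq \mu(y) + c(x-y)$ for $x > y$, multiply by $x > 0$, and use $|x| \leq x^2$ for $|x| \geq 1$ to get $x \mu(x) \leq C|x|^2$; the bounded leftover piece $\xi_k < x \leq y$ is handled as in the bounded case, while the interval $(-\infty,\xi_1)$ with $\xi_1 \leq 0$ is treated by the mirror argument. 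The main bookkeeping task is consolidating the case-dependent constants into a single uniform $\sqrt{\lambda}$, which works because there are only finitely many intervals and all base points can be chosen with $|y_i|$ uniformly bounded. Finally, enlarging the maximum of the constants from (i)--(iv) to exceed $4$ yields the desired $\lambda$.
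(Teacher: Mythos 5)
Your proposal is correct and follows essentially the same route as the paper: choose base points in each piece, use the piecewise one-sided Lipschitz and polynomial-growth conditions from (B3) to control the unbounded tails, handle the compact middle by boundedness, and then enlarge $\lambda$ to cover all four parts. The only difference is organizational --- you first derive one global polynomial bound $|\mu(x)| \le C(1+|x|^{\ell+1})$ and reuse it for (i) and (iv), whereas the paper repeats the growth estimate separately on each unbounded tail --- a cosmetic streamlining rather than a different argument.
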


\begin{proof}
It is straightforward to check part (i) and part (ii).

We next prove part (iii). Let $\eta_1 \in (-\infty,\xi_1\wedge -1)$. Using the one-sided Lipschitz continuity of $\mu$ on the interval $(-\infty,\xi_1)$, we see that there exists $c_1 \in (0,\infty)$ such that for all $x \in (-\infty,\eta_1)$,
\[
\mu(x)-\mu(\eta_1) \geq c_1 \cdot (x-\eta_1) > c x,
\]
and therefore,
\begin{equation}\label{lambda1}
x \cdot \mu(x) \leq \big(c_1 + |\mu(\eta_1)|\big) \cdot |x|^2.
\end{equation}
Let $\eta_2 \in (\xi_k\vee 1,\infty)$. Similarly to \eqref{lambda1}, we obtain that there exists $c_2 \in (0,\infty)$ such that for all $x \in (\eta_2,\infty)$,
\begin{equation}\label{lambda2}
x \cdot \mu(x) \leq \big(c_2 + |\mu(\eta_2)|\big) \cdot |x|^2.
\end{equation}
Using the piecewise continuity of $\mu$, we see that there exists $c_3 \in (0,\infty)$ such that for all $x \in [\eta_1,\eta_2]$ with $|x| \geq 1$,
\begin{equation}\label{lambda3}
x \cdot \mu(x) \leq c_3 \cdot |x|^2.
\end{equation}
Combining \eqref{lambda1}, \eqref{lambda2} and \eqref{lambda3} completes the proof of part (iii).

We are now in a position to prove part (iv). Let $\eta_1 \in (-\infty,\xi_1\wedge -1)$. By Lemma \ref{assum2} it follows that there exists $c_1 \in (0,\infty)$ such that for all $x \in (-\infty,\eta_1)$ with $1 \leq |x| \leq n^{1/(2\ell)}$ and all $n\in\N$,
\begin{align*}
|\mu(x)-\mu(\eta_1)| 
&\leq c_1 \cdot \big(1+|x|^{\ell}+|\eta_1|^{\ell}\big) \cdot |x-\eta_1| \\
&\leq c_1 \cdot \big(1+2\sqrt{n}\big) \cdot |x|,
\end{align*}
and therefore,
\begin{align}\label{lambda4}
|\mu(x)| 
&\leq c_1 \cdot \big(1+2\sqrt{n}\big) \cdot |x| + |\mu(\eta_1)| \notag \\
&\leq 3c_1 \cdot \sqrt{n}|x| + |\mu(\eta_1)| \cdot \sqrt{n}|x| \notag \\
&= \big(3c_1 + |\mu(\eta_1)|\big) \cdot \sqrt{n}|x|.
\end{align}
Let $\eta_2 \in (\xi_k\vee 1,\infty)$. Similarly to \eqref{lambda4}, we may show that there exists $c_2 \in (0,\infty)$ such that for all $x \in (\eta_2,\infty)$ with $1 \leq |x| \leq n^{1/(2\ell)}$ and all $n\in\N$,
\begin{equation}\label{lambda5}
|\mu(x)| \leq \big(3c_2 + |\mu(\eta_2)|\big) \cdot \sqrt{n}|x|.
\end{equation}
Using the piecewise continuity of $\mu$, we see that there exists $c_3 \in (0,\infty)$ such that for all $x \in [\eta_1,\eta_2]$ with $1 \leq |x| \leq n^{1/(2\ell)}$ and all $n\in\N$,
\begin{equation}\label{lambda6}
|\mu(x)| \leq c_3 \cdot \sqrt{n}|x|.
\end{equation}
Combining \eqref{lambda4}, \eqref{lambda5} and \eqref{lambda6} completes the proof of part (iv).
\end{proof}

Next, choose $\lambda$ according to Lemma \ref{lambda}. Let
\begin{align*}
&\Delta W_{n,i} := W_{(i+1)/n} - W_{i/n}, \\
&\alpha_{n,i} := \ind_{\{ |\widehat{X}_{n,i/n}| \geq 1 \}} \frac{\sigma(\widehat{X}_{n,i/n})}{\widehat{X}_{n,i/n}} \Delta W_{n,i}
\end{align*}
for all $i \in \{ 0,1,\dots,n-1 \}$ and all $n\in\N$, let
\begin{align*}
&D_{n,i} := \big( \lambda + |x_0| \big) \cdot \exp \bigg( \lambda + \sup_{u \in \{ 0,1,\dots,i \}} \sum_{j=u}^{i-1} \big[ \lambda|\Delta W_{n,j}|^2 + \alpha_{n,j} \big] \bigg), \\
&\Omega_{n,i} := \Big\{ \omega \in \Omega : \sup_{j \in \{ 0,1,\dots,i-1 \}} D_{n,j} (\omega) \leq n^{1/(2\ell)}, 
\sup_{j \in \{ 0,1,\dots,i-1 \}} |\Delta W_{n,j}(\omega)| \leq 1 \Big\}
\end{align*}
for all $i \in \{ 0,1,\dots,n \}$ and all $n\in\N$, let
\[
\alpha_{n,i}^x := \ind_{\{ |\widehat{X}_{n,i/n}^x| \geq 1 \}} \frac{\sigma(\widehat{X}_{n,i/n}^x)}{\widehat{X}_{n,i/n}^x} \Delta W_{n,i}
\]
for all $x\in\R$, all $i \in \{ 0,1,\dots,n-1 \}$ and all $n\in\N$, and let
\begin{align*}
&D_{n,i}^x := \big( \lambda + |x| \big) \cdot \exp \bigg( \lambda + \sup_{u \in \{ 0,1,\dots,i \}} \sum_{j=u}^{i-1} \big[ \lambda|\Delta W_{n,j}|^2 + \alpha_{n,j}^x \big] \bigg), \\
&\Omega_{n,i}^x := \Big\{ \omega \in \Omega : \sup_{j \in \{ 0,1,\dots,i-1 \}} D_{n,j}^x (\omega) \leq n^{1/(2\ell)}, 
\sup_{j \in \{ 0,1,\dots,i-1 \}} |\Delta W_{n,j}(\omega)| \leq 1 \Big\}
\end{align*}
for all $x\in\R$, all $i \in \{ 0,1,\dots,n \}$ and all $n\in\N$.

\begin{lemma}
Let Assumption \ref{assum2} hold. Then we have that for all $i \in \{ 0,1,\dots,n \}$ and all $n\in\N$,
\begin{equation}\label{domin}
\ind_{\Omega_{n,i}} |\widehat{X}_{n,i/n}| \leq D_{n,i}.
\end{equation}
Furthermore, we have that for all $x\in\R$, all $i \in \{ 0,1,\dots,n \}$ and all $n\in\N$,
\[
\ind_{\Omega_{n,i}^x} |\widehat{X}_{n,i/n}^x| \leq D_{n,i}^x.
\]
\end{lemma}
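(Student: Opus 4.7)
Both inequalities have structurally identical proofs (the second is obtained by substituting $x_0 \mapsto x$ throughout), so I focus on the first. My plan is to proceed by induction on $i$, deriving two single-step estimates and iterating them from a suitably chosen starting index. Fixing $i$ and working on $\Omega_{n,i}$, the inductive hypothesis gives $|\widehat{X}_{n,j/n}| \leq D_{n,j} \leq n^{1/(2\ell)}$ for $j \leq i-1$, and the definition of $\Omega_{n,i}$ gives $|\Delta W_{n,j}| \leq 1$; these are exactly the bounds needed to apply parts (i)--(iv) of Lemma \ref{lambda}.

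When $|\widehat{X}_{n,j/n}| < 1$, Lemma \ref{lambda}(i) gives $|\mu|, |\sigma| \leq \lambda$ at $\widehat{X}_{n,j/n}$, so the tamed Euler recursion directly yields $|\widehat{X}_{n,(j+1)/n}| \leq 1 + \lambda/n + \lambda|\Delta W_{n,j}| \leq 1+2\lambda$. When $|\widehat{X}_{n,j/n}| \geq 1$, I would write $x := \widehat{X}_{n,j/n}$, square the recursion, and apply parts (iii), (iv), and (ii) of Lemma \ref{lambda} respectively to bound $2x\mu(x)/(n+|\mu(x)|) \leq 2\sqrt{\lambda}\,x^2/n$, the squared drift by $\sqrt{\lambda}\,x^2/n$, and the squared diffusion by $\lambda x^2|\Delta W_{n,j}|^2$; together with the identity $2x\sigma(x)\Delta W_{n,j} = 2x^2\alpha_{n,j}$ (valid when $|x| \geq 1$), this gives
\[
|\widehat{X}_{n,(j+1)/n}|^2 \leq |x|^2 \bigl(1 + 4\sqrt{\lambda}/n + 2\lambda|\Delta W_{n,j}|^2 + 2\alpha_{n,j}\bigr),
\]
whence $|\widehat{X}_{n,(j+1)/n}| \leq |\widehat{X}_{n,j/n}| \exp\bigl(2\sqrt{\lambda}/n + \lambda|\Delta W_{n,j}|^2 + \alpha_{n,j}\bigr)$ by $1+y \leq e^y$ and taking square roots.

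To close the induction, let $u^* \in \{0, \ldots, i\}$ be the smallest index with $|\widehat{X}_{n,r/n}| \geq 1$ for all $u^* \leq r \leq i-1$ (so $u^* = i$ is allowed vacuously). If $u^* \geq 1$, then $|\widehat{X}_{n,(u^*-1)/n}| < 1$ and one Case 1 step gives $|\widehat{X}_{n,u^*/n}| \leq 1 + 2\lambda$; if $u^* = 0$, then $|\widehat{X}_{n,u^*/n}| = |x_0|$. In either case, iterating the multiplicative step bound from $u^*$ to $i-1$ gives
\[
|\widehat{X}_{n,i/n}| \leq \max\{|x_0|, 1+2\lambda\} \cdot \exp\Bigl(2\sqrt{\lambda}(i-u^*)/n + \sum_{j=u^*}^{i-1}\bigl[\lambda|\Delta W_{n,j}|^2 + \alpha_{n,j}\bigr]\Bigr).
\]
Using $2\sqrt{\lambda}(i-u^*)/n \leq 2\sqrt{\lambda} \leq \lambda$ (valid since $\lambda \geq 4$), I take $u = u^*$ in the sup defining $D_{n,i}$ to conclude. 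The main obstacle is the final constant-tracking: the $e^\lambda$ prefactor of $D_{n,i}$ must simultaneously absorb both the accumulated drift factor $e^{2\sqrt{\lambda}}$ and the Case-1 reset prefactor $(1+2\lambda)/(\lambda+|x_0|)$, which becomes automatic once $\lambda$ is chosen sufficiently large (Lemma \ref{lambda} permits any $\lambda \in [4,\infty)$ with the stated properties, so this merely requires inflating $\lambda$ as needed).
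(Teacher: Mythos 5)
Your proof follows the same strategy as the paper: a multiplicative step estimate on $\{1\leq|\widehat{X}_{n,j/n}|\leq n^{1/(2\ell)}\}$ obtained by squaring the recursion and applying Lemma~\ref{lambda}(ii)--(iv), a constant reset bound on $\{|\widehat{X}_{n,j/n}|<1\}$ via Lemma~\ref{lambda}(i), and an iteration starting from the last index below $1$ (your $u^*$ is exactly $\tau_{n,i}+1$ in the paper's notation). The only deviation is that your Case-1 reset gives $1+2\lambda$ rather than the paper's $\lambda$; the paper gets the sharper constant by using the combined bound $|\widehat{X}_{n,j/n}|+|\mu(\widehat{X}_{n,j/n})|+|\sigma(\widehat{X}_{n,j/n})|\leq 1+|\mu|+|\sigma|\leq\lambda$ directly from Lemma~\ref{lambda}(i), so no inflation of $\lambda$ is needed. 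Your observation that one may instead enlarge $\lambda$ to absorb the slack is valid, since Lemma~\ref{lambda} only asserts existence of some $\lambda\in[4,\infty)$ and all downstream statements tolerate a larger choice.
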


\begin{proof}
We only prove \eqref{domin}. First of all, we have that
\[
|\Delta W_{n,i}| \leq 1
\]
on $\Omega_{n,i+1}$ for all $i \in \{ 0,1,\dots,n-1 \}$ and all $n\in\N$. Hence, by Lemma \ref{lambda}(i) we obtain that
\begin{align}\label{domin1}
|\widehat{X}_{n,(i+1)/n}| &\leq |\widehat{X}_{n,i/n}| + \frac1n |\mu(\widehat{X}_{n,i/n})| + |\sigma(\widehat{X}_{n,i/n})| |\Delta W_{n,i}| \notag \\
&\leq |\widehat{X}_{n,i/n}| + |\mu(\widehat{X}_{n,i/n})| + |\sigma(\widehat{X}_{n,i/n})| \notag \\
&\leq \lambda
\end{align}
on $\Omega_{n,i+1} \cap \{ \omega\in\Omega : |\widehat{X}_{n,i/n}(\omega)| \leq 1 \}$ for all $i \in \{ 0,1,\dots,n-1 \}$ and all $n\in\N$.

Moreover, using Lemma \ref{lambda} and the fact that for all $a,b\in\R$,
\begin{equation}\label{ine2}
ab \leq \frac{a^2+b^2}{2},
\end{equation}
we obtain that
\begin{align}\label{domin2}
|\widehat{X}_{n,(i+1)/n}|^2 &= \bigg| \widehat{X}_{n,i/n} + \frac{1/n \cdot \mu(\widehat{X}_{n,i/n})}{1 + 1/n \cdot |\mu(\widehat{X}_{n,i/n})|} + \sigma(\widehat{X}_{n,i/n}) \Delta W_{n,i} \bigg|^2 \notag \\
&= |\widehat{X}_{n,i/n}|^2 + \frac{|1/n \cdot \mu(\widehat{X}_{n,i/n})|^2}{(1 + 1/n \cdot |\mu(\widehat{X}_{n,i/n})|)^2} + |\sigma(\widehat{X}_{n,i/n}) \Delta W_{n,i}|^2 \notag \\
&\quad + \frac{2 \widehat{X}_{n,i/n} \cdot 1/n \cdot \mu(\widehat{X}_{n,i/n})}{1 + 1/n \cdot |\mu(\widehat{X}_{n,i/n})|} + 2 \widehat{X}_{n,i/n} \cdot \sigma(\widehat{X}_{n,i/n}) \Delta W_{n,i} \notag \\
&\quad + \frac{2/n \cdot \mu(\widehat{X}_{n,i/n}) \cdot \sigma(\widehat{X}_{n,i/n}) \Delta W_{n,i}}{1 + 1/n \cdot |\mu(\widehat{X}_{n,i/n})|} \notag \\
&\leq |\widehat{X}_{n,i/n}|^2 + \frac{1}{n^2} |\mu(\widehat{X}_{n,i/n})|^2 + |\sigma(\widehat{X}_{n,i/n})|^2 |\Delta W_{n,i}|^2 \notag \\
&\quad + \frac{2/n \cdot \widehat{X}_{n,i/n} \cdot \mu(\widehat{X}_{n,i/n})}{1 + 1/n \cdot |\mu(\widehat{X}_{n,i/n})|} + 2 \widehat{X}_{n,i/n} \cdot \sigma(\widehat{X}_{n,i/n}) \Delta W_{n,i} \notag \\
&\quad + \frac{2}{n} |\mu(\widehat{X}_{n,i/n})| |\sigma(\widehat{X}_{n,i/n})| |\Delta W_{n,i}| \notag \\
&\leq |\widehat{X}_{n,i/n}|^2 + \frac{2}{n^2} |\mu(\widehat{X}_{n,i/n})|^2 + 2 |\sigma(\widehat{X}_{n,i/n})|^2 |\Delta W_{n,i}|^2 \notag \\
&\quad + \frac{2/n \cdot \widehat{X}_{n,i/n} \cdot \mu(\widehat{X}_{n,i/n})}{1 + 1/n \cdot |\mu(\widehat{X}_{n,i/n})|} + 2 \widehat{X}_{n,i/n} \cdot \sigma(\widehat{X}_{n,i/n}) \Delta W_{n,i} \notag \\
&\leq |\widehat{X}_{n,i/n}|^2 + \frac{2\sqrt{\lambda}}{n} |\widehat{X}_{n,i/n}|^2 + 2 \lambda |\widehat{X}_{n,i/n}|^2 |\Delta W_{n,i}|^2 \notag \\
&\quad + \frac{2\sqrt{\lambda}}{n} |\widehat{X}_{n,i/n}|^2 + 2\widehat{X}_{n,i/n} \cdot \sigma(\widehat{X}_{n,i/n}) \Delta W_{n,i} \notag \\
&= |\widehat{X}_{n,i/n}|^2 \bigg(1 + \frac{4\sqrt{\lambda}}{n} + 2\lambda |\Delta W_{n,i}|^2 + 2\alpha_{n,i}\bigg) \notag \\
&\leq |\widehat{X}_{n,i/n}|^2 \exp\bigg(\frac{2\lambda}{n} + 2\lambda |\Delta W_{n,i}|^2 + 2\alpha_{n,i}\bigg)
\end{align}
on $\{ \omega\in\Omega : 1 \leq |\widehat{X}_{n,i/n}(\omega)| \leq n^{1/(2\ell)} \}$ for all $i \in \{ 0,1,\dots,n-1 \}$ and all $n\in\N$.

Additionally, we use the mappings $\tau_{n,l} : \Omega \to \{ -1,0,1,\dots,l-1 \}, l \in \{ 0,1,\dots,n \}, n\in\N$, given by
\[
\tau_{n,l}(\omega) := \max(\{ -1 \} \cup \{ i \in \{ 0,1,\dots,l-1 \} : |\widehat{X}_{n,i/n}(\omega)| \leq 1 \})
\]
for all $\omega\in\Omega$, all $l \in \{ 0,1,\dots,n \}$ and all $n\in\N$.

We prove by induction on $i \in \{ 0,1,\dots,n \}$ where $n\in\N$ is fixed. The base case $i = 0$ is trivial. Now let $l \in \{ 0,1,\dots,n-1 \}$ be arbitrary and assume that \eqref{domin} holds for all $i \in \{ 0,1,\dots,l \}$. We then show \eqref{domin} for $i = l+1$. Let $\omega\in\Omega_{n,l+1}$ be arbitrary. From the induction hypothesis and from $\omega\in\Omega_{n,l+1}\subset\Omega_{n,i+1}$ it follows that
\[
|\widehat{X}_{n,i/n}(\omega)| \leq D_{n,i}(\omega) \leq n^{1/(2\ell)}
\]
for all $i \in \{ 0,1,\dots,l \}$. We therefore obtain that
\[
1 \leq |\widehat{X}_{n,i/n}(\omega)| \leq n^{1/(2\ell)}
\]
for all $i \in \{ \tau_{n,l+1}(\omega)+1,\tau_{n,l+1}(\omega)+2,\dots,l \}$. Estimate \eqref{domin2} thus gives that
\begin{equation}\label{domin3}
|\widehat{X}_{n,(i+1)/n}(\omega)| \leq |\widehat{X}_{n,i/n}(\omega)| \cdot \exp\bigg(\frac{\lambda}{n} + \lambda |\Delta W_{n,i}(\omega)|^2 + \alpha_{n,i}(\omega)\bigg)
\end{equation}
for all $i \in \{ \tau_{n,l+1}(\omega)+1,\tau_{n,l+1}(\omega)+2,\dots,l \}$. Iterating \eqref{domin3} hence yields that
\begin{align*}
&|\widehat{X}_{n,(l+1)/n}(\omega)| \\
&\leq |\widehat{X}_{n,l/n}(\omega)| \cdot \exp\bigg(\frac{\lambda}{n} + \lambda |\Delta W_{n,l}(\omega)|^2 + \alpha_{n,l}(\omega)\bigg) \\
&\leq \cdots \leq |\widehat{X}_{n,(\tau_{n,l+1}(\omega)+1)/n}(\omega)| \cdot \exp\bigg(\sum_{i=\tau_{n,l+1}(\omega)+1}^l \bigg(\frac{\lambda}{n} + \lambda |\Delta W_{n,i}(\omega)|^2 + \alpha_{n,i}(\omega)\bigg)\bigg) \\
&\leq |\widehat{X}_{n,(\tau_{n,l+1}(\omega)+1)/n}(\omega)| \cdot \exp\bigg(\lambda + \sup_{u \in \{ 0,1,\dots,l+1 \}} \sum_{i=u}^l \big(\lambda |\Delta W_{n,i}(\omega)|^2 + \alpha_{n,i}(\omega)\big)\bigg).
\end{align*}
Estimate \eqref{domin1} therefore shows that
\begin{align*}
|\widehat{X}_{n,(l+1)/n}(\omega)| 
&\leq \big(\lambda + |x_0(\omega)|\big) \cdot \exp\bigg(\lambda + \sup_{u \in \{ 0,1,\dots,l+1 \}} \sum_{i=u}^l \big(\lambda |\Delta W_{n,i}(\omega)|^2 + \alpha_{n,i}(\omega)\big)\bigg) \\
&\leq D_{n,l+1}(\omega).
\end{align*}
This finishes the induction step and the proof.
\end{proof}

\begin{lemma}
Let $p \in [1,\infty)$. Then we have that
\[
\sup_{\substack{n\in\N, \\ n \geq 4 \lambda p}} \EE\bigg[ \exp\bigg( p\lambda \sum_{j=0}^{n-1} |\Delta W_{n,j}|^2 \bigg) \bigg] < \infty.
\]
\end{lemma}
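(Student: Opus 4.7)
The plan is to exploit the independence and Gaussianity of the Brownian increments to reduce everything to the moment generating function of a chi-squared random variable, and then bound the resulting expression uniformly in $n$.

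First I would observe that $\Delta W_{n,0}, \Delta W_{n,1}, \dots, \Delta W_{n,n-1}$ are i.i.d., each distributed as $N(0,1/n)$. By independence the expectation factors:
\[
\EE\bigg[ \exp\bigg( p\lambda \sum_{j=0}^{n-1} |\Delta W_{n,j}|^2 \bigg) \bigg] = \Big( \EE\big[ \exp\big(p\lambda|\Delta W_{n,0}|^2\big)\big]\Big)^n.
\]
Writing $|\Delta W_{n,0}|^2 = Z^2/n$ with $Z \sim N(0,1)$ and recalling the standard identity $\EE[\exp(aZ^2)] = (1-2a)^{-1/2}$ valid for all $a \in (-\infty,1/2)$, the right-hand side becomes $(1 - 2p\lambda/n)^{-n/2}$, provided $n > 2p\lambda$ (which is ensured by the assumption $n \geq 4p\lambda$).

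Next I would show that this quantity stays bounded as $n$ varies over $\{n \in \N : n \geq 4p\lambda\}$. For $n \geq 4p\lambda$ we have $x := 2p\lambda/n \in [0,1/2]$, and on this interval the elementary inequality $-\log(1-x) \leq 2x$ holds (both sides vanish at $0$ and the derivatives satisfy $1/(1-x) \leq 2$). Thus
\[
-\frac{n}{2} \log\!\Big(1-\frac{2p\lambda}{n}\Big) \leq -\frac{n}{2}\cdot\Big(-2\cdot\frac{2p\lambda}{n}\Big) = 2p\lambda,
\]
so $(1-2p\lambda/n)^{-n/2} \leq e^{2p\lambda}$. Taking the supremum over $n \geq 4p\lambda$ then gives the claimed finite bound.

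There is no real obstacle here since everything reduces to a direct computation once independence is used; the only mild subtlety is ensuring the logarithmic inequality so that the bound is uniform in $n$ rather than growing with $n$. An equivalent route would be to note that $(1-2p\lambda/n)^{-n/2} \to e^{p\lambda}$ as $n \to \infty$, so that the sequence is bounded, but the explicit inequality above has the advantage of giving a concrete constant.
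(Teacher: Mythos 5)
Your proof is correct, and it takes the standard route — factorizing by independence, invoking the explicit moment generating function $\EE[e^{aZ^2}]=(1-2a)^{-1/2}$ of $Z^2$ for $Z\sim N(0,1)$, and bounding $(1-2p\lambda/n)^{-n/2}$ uniformly via $-\log(1-x)\le 2x$ on $[0,1/2]$ — which is essentially the same argument as the proof of \cite[Lemma 3.3]{MH} that the paper refers to.
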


\begin{proof}
See the proof of \cite[Lemma 3.3]{MH}.
\end{proof}

\begin{lemma}
Let Assumption \ref{assum2} hold and $p \in [1,\infty)$. Then we have that
\[
\sup_{z \in \{-1,1\}} \sup_{n\in\N} \EE\bigg[ \sup_{i \in \{ 0,1,\dots,n \}} \exp\bigg( pz \sum_{j=0}^{i-1} \alpha_{n,j} \bigg) \bigg] < \infty.
\]
Furthermore, there exists $c \in (0,\infty)$ such that for all $x\in\R$,
\[
\sup_{z \in \{-1,1\}} \sup_{n\in\N} \EE\bigg[ \sup_{i \in \{ 0,1,\dots,n \}} \exp\bigg( pz \sum_{j=0}^{i-1} \alpha_{n,j}^x \bigg) \bigg] \leq c.
\]
\end{lemma}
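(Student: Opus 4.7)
The plan is to recognise that, for each fixed $n$ and $z \in \{-1,1\}$, the partial sums $S_i := \sum_{j=0}^{i-1} \alpha_{n,j}$ form a martingale transform whose predictable multiplier is uniformly bounded, so the standard exponential-martingale device combined with Doob's $L^2$ maximal inequality yields a bound on $\EE[\sup_i \exp(pz S_i)]$ independent of $n$.

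First I would write $\alpha_{n,j} = \beta_{n,j} \Delta W_{n,j}$ with $\beta_{n,j} := \ind_{\{|\widehat X_{n,j/n}| \geq 1\}} \sigma(\widehat X_{n,j/n})/\widehat X_{n,j/n}$. Since $\widehat X_{n,j/n}$ is $\F_{j/n}$-measurable, so is $\beta_{n,j}$; and Lemma \ref{lambda}(ii) gives $|\beta_{n,j}| \leq \sqrt{\lambda}$ everywhere (with $\beta_{n,j}=0$ off the indicator set). Setting $\theta := pz$, I would then introduce the discrete exponential martingale
\[
N_i := \exp\bigg( \theta S_i - \frac{\theta^2}{2n} \sum_{j=0}^{i-1} \beta_{n,j}^2 \bigg), \qquad i \in \{0,1,\dots,n\}.
\]
Using that $\Delta W_{n,j}$ is independent of $\F_{j/n}$ with $N(0,1/n)$ law, the Gaussian moment-generating function gives $\EE[N_{i+1} \mid \F_{i/n}] = N_i$, so $(N_i)$ is a nonnegative $(\F_{i/n})$-martingale with $\EE[N_0] = 1$.

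Because $\beta_{n,j}^2 \leq \lambda$, we have $\exp(\theta S_i) \leq \exp(p^2 \lambda / 2)\, N_i$ for every $i$. Doob's $L^2$ maximal inequality then gives
\[
\EE\Big[ \sup_i N_i \Big] \leq \big(\EE[\sup_i N_i^2]\big)^{1/2} \leq 2 \big(\EE[N_n^2]\big)^{1/2}.
\]
To estimate $\EE[N_n^2]$ I would rewrite $N_n^2$ as the exponential martingale at parameter $2\theta$ times a deterministic correction:
\[
N_n^2 = \exp\bigg( 2\theta S_n - \frac{(2\theta)^2}{2n} \sum_{j=0}^{n-1} \beta_{n,j}^2 \bigg) \cdot \exp\bigg( \frac{\theta^2}{n} \sum_{j=0}^{n-1} \beta_{n,j}^2 \bigg).
\]
The first factor has expectation $1$ (same argument as above but with $2\theta$) and the second is bounded by $\exp(p^2 \lambda)$, so $\EE[N_n^2] \leq \exp(p^2 \lambda)$. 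Combining yields
\[
\EE\Big[ \sup_i \exp(pz S_i) \Big] \leq 2 \exp(p^2 \lambda),
\]
which is independent of $n$ and of $z \in \{-1,1\}$.

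The second statement (with $\alpha_{n,j}^x$ and the uniform-in-$x$ constant $c$) follows by the identical argument applied to $\widehat X_{n,j/n}^x$: the bound $|\beta_{n,j}^x| \leq \sqrt{\lambda}$ from Lemma \ref{lambda}(ii) depends only on $\lambda$, not on $x$, so one may take $c := 2 \exp(p^2 \lambda)$. There is no genuine obstacle here; the only point that must be verified carefully is the one-step martingale identity for $N_i$, which rests on the measurability of $\beta_{n,j}$ with respect to $\F_{j/n}$ together with the independence and Gaussian law of $\Delta W_{n,j}$.
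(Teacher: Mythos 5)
Your argument is correct and is essentially the same exponential-martingale-plus-Doob argument that underlies Lemma~3.4 of Hutzenthaler--Jentzen--Kloeden, which is the reference the paper cites for this lemma; the only stylistic difference is that the cited proof works with the bound $\beta_{n,j}^2 \le \lambda$ directly to produce a supermartingale rather than first building the exact Doob--Meyer martingale $N_i$, but the conditioning on $\F_{j/n}$, the Gaussian moment generating function, and Doob's $L^2$ maximal inequality are deployed in the same way, and the uniformity in $x$ follows for the same reason you give, namely that $|\beta_{n,j}^x|\le\sqrt\lambda$ does not depend on $x$.
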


\begin{proof}
See the proof of \cite[Lemma 3.4]{MH}.
\end{proof}

\begin{lemma}
Let Assumption \ref{assum2} hold and $p \in [1,\infty)$. Then we have that
\[
\sup_{\substack{n\in\N, \\ n \geq 8 \lambda p}} \EE\Big[ \sup_{n \in \{ 0,1,\dots,n \}} |D_{n,i}|^p \Big] < \infty.
\]
Furthermore, there exists $c \in (0,\infty)$ such that for all $x\in\R$,
\[
\sup_{\substack{n\in\N, \\ n \geq 8 \lambda p}} \Big(\EE\Big[ \sup_{n \in \{ 0,1,\dots,n \}} |D_{n,i}^x|^p \Big]\Big)^{1/p} \leq c \cdot \big( 1+|x| \big).
\]
\end{lemma}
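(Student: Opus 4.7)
The plan is to decompose $D_{n,i}$ into three factors that can be controlled independently by the three preceding lemmas, using Hölder's inequality with carefully chosen exponents. Writing
\[
D_{n,i}^{p}=(\lambda+|x_{0}|)^{p}\,e^{p\lambda}\exp\!\Bigl(p\sup_{u\le i}\sum_{j=u}^{i-1}\bigl[\lambda|\Delta W_{n,j}|^{2}+\alpha_{n,j}\bigr]\Bigr)
\]
and using that $|\Delta W_{n,j}|^{2}\ge 0$ (so the Brownian contribution inside the $\sup_{u}$ is dominated, uniformly in $i,u$, by $\lambda\sum_{j=0}^{n-1}|\Delta W_{n,j}|^{2}$), I take $\sup_{i}$ and apply Hölder with three exponents $(4,2,4)$. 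This gives
\[
\EE\!\Bigl[\sup_{i}|D_{n,i}|^{p}\Bigr]\le e^{p\lambda}\bigl(\EE[(\lambda+|x_{0}|)^{4p}]\bigr)^{1/4}\bigl(\EE[\exp(2p\lambda{\textstyle\sum_{j=0}^{n-1}}|\Delta W_{n,j}|^{2})]\bigr)^{1/2}\bigl(\EE[Y_{n}]\bigr)^{1/4},
\]
where $Y_{n}:=\sup_{i}\exp\!\bigl(4p\sup_{u\le i}\sum_{j=u}^{i-1}\alpha_{n,j}\bigr)$. The first factor is finite by Assumption \ref{assum2}, and the second is bounded uniformly in $n$ as soon as $n\ge 4\lambda\cdot 2p=8\lambda p$, by the Brownian lemma; this is exactly how the hypothesis $n\ge 8\lambda p$ is matched.

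The technical heart of the proof is the treatment of $Y_{n}$, which I would handle via the telescoping identity
\[
\sup_{u\le i}\sum_{j=u}^{i-1}\alpha_{n,j}=\sum_{j=0}^{i-1}\alpha_{n,j}-\inf_{u\le i}\sum_{j=0}^{u-1}\alpha_{n,j}.
\]
This converts the awkward ``inner'' supremum into a product of two quantities that are each monotone in $i$, yielding the pointwise majorization
\[
Y_{n}\le \sup_{i}\exp\!\Bigl(4p\sum_{j=0}^{i-1}\alpha_{n,j}\Bigr)\cdot\sup_{u}\exp\!\Bigl(-4p\sum_{j=0}^{u-1}\alpha_{n,j}\Bigr).
\]
A single Cauchy--Schwarz then reduces $\EE[Y_{n}]$ to the two cases $z\in\{-1,1\}$ of the preceding $\alpha$-lemma applied with exponent $8p$, both of which are finite and uniform in $n$. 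Combining the three uniform bounds yields the first assertion.

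For the second assertion the proof is identical except that $(\lambda+|x|)^{p}$ is deterministic, so it factors out without any Hölder inequality; only a Cauchy--Schwarz splitting of the Brownian and $\alpha^{x}$-factors is needed, and the uniform-in-$x$ bound from the $\alpha$-lemma applied to $(\alpha_{n,j}^{x})_{j}$ supplies a constant independent of $x$. This gives $\EE[\sup_{i}|D_{n,i}^{x}|^{p}]\le C^{p}(\lambda+|x|)^{p}$ with $C$ independent of $x$ and of $n\ge 8\lambda p$; taking $p$-th roots yields the claimed bound $c\,(1+|x|)$. The only real obstacle in the argument is the telescoping identity above, which is what allows the two preceding expontential moment lemmas, stated for partial sums $\sum_{j=0}^{i-1}\alpha_{n,j}$, to be leveraged against the supremum over the starting index $u$ that appears in the definition of $D_{n,i}$.
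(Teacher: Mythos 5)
Your argument is correct and is essentially the argument behind the proof that the paper invokes (the paper itself only refers to the proof of \cite[Lemma 3.5]{MH}): a H\"older splitting of $\sup_i|D_{n,i}|^p$ into the initial-value factor, the Brownian factor $\exp(2p\lambda\sum_j|\Delta W_{n,j}|^2)$ (which is exactly where the restriction $n\geq 8\lambda p$ enters), and the $\alpha$-factor, with the running maximum $\sup_{u\leq i}\sum_{j=u}^{i-1}\alpha_{n,j}$ handled by the telescoping decomposition into $\sum_{j=0}^{i-1}\alpha_{n,j}$ minus a running minimum, so that the two cases $z=\pm 1$ of the preceding exponential-moment lemma apply. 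The same remarks apply to your treatment of $D_{n,i}^x$, where the deterministic prefactor $(\lambda+|x|)$ factors out and the uniform-in-$x$ bound of the $\alpha^x$-lemma gives the claimed $c\cdot(1+|x|)$.
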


\begin{proof}
See the proof of \cite[Lemma 3.5]{MH}.
\end{proof}

\begin{lemma}
Let Assumption \ref{assum2} hold and $p \in [1,\infty)$. Then we have that
\[
\sup_{n\in\N} \big( n^p \cdot \PP [(\Omega_{n,n})^{c}] \big) < \infty.
\]
Furthermore, there exists $c \in (0,\infty)$ such that for all $x\in\R$,
\[
\sup_{n\in\N} \big( n^p \cdot \PP [(\Omega_{n,n}^x)^{c}] \big)^{1/p} \leq c \cdot \big( 1 + |x|^{4(1+\ell)} \big).
\]
\end{lemma}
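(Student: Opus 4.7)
The plan is to dominate $(\Omega_{n,n})^c$ by the union of two simpler events and then bound each by a Markov-type argument using the moment estimates from the preceding lemma. Concretely, write
\[
(\Omega_{n,n})^c \subseteq A_n \cup B_n,
\]
where $A_n := \big\{ \sup_{j \in \{0,\dots,n-1\}} D_{n,j} > n^{1/(2\ell)} \big\}$ records failure of the growth threshold on $D_{n,j}$ and $B_n := \big\{ \sup_{j \in \{0,\dots,n-1\}} |\Delta W_{n,j}| > 1 \big\}$ records a large Brownian increment. It then suffices to bound $n^p \PP(A_n)$ and $n^p \PP(B_n)$ separately, uniformly in $n$.

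For $A_n$, I would fix the exponent $q := (2\ell p) \vee 1$ and apply Markov's inequality in the form
\[
\PP(A_n) \leq n^{-q/(2\ell)} \, \EE\Big[ \sup_{j \in \{0,\dots,n-1\}} |D_{n,j}|^q \Big].
\]
The previous lemma on moments of $D_{n,j}$ guarantees that this expectation is bounded uniformly for $n \geq 8\lambda q$, and since $q/(2\ell) \geq p$ this gives $n^p \PP(A_n) \leq C$ for all such $n$; for the finitely many smaller $n$, the trivial bound $n^p \PP(A_n) \leq (8\lambda q)^p$ suffices. For $B_n$, a union bound followed by the Gaussian tail estimate applied to $\Delta W_{n,0} \sim N(0,1/n)$ yields
\[
\PP(B_n) \leq n \, \PP(|\Delta W_{n,0}| > 1) \leq 2 n \exp(-n/2),
\]
which decays faster than any polynomial in $1/n$, hence $\sup_n n^p \PP(B_n) < \infty$. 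Combining the two bounds gives the first assertion.

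For the second assertion, the same decomposition $(\Omega_{n,n}^x)^c \subseteq A_n^x \cup B_n$ applies, with $A_n^x$ defined analogously using $D_{n,j}^x$ in place of $D_{n,j}$; the bound on $B_n$ is $x$-independent and is harmless. Choosing $q := (2\ell p) \vee 1$ again, the parametrized moment bound from the previous lemma gives $\EE\big[ \sup_j |D_{n,j}^x|^q \big] \leq c^q (1+|x|)^q$ uniformly in $n \geq 8\lambda q$, so Markov produces $n^p \PP(A_n^x) \leq c^q (1+|x|)^q$. Extracting the $p$-th root yields an estimate of order $(1+|x|)^{q/p} \leq C(1 + |x|^{q/p})$; since $q/p = (2\ell) \vee (1/p) \leq 4(1+\ell)$, this is dominated by $c(1+|x|^{4(1+\ell)})$, as required.

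No single step is genuinely hard here: the heavy probabilistic lifting has already been carried out in the preceding lemma's $L^p$-bounds on $D_{n,j}$ and $D_{n,j}^x$. The main bookkeeping challenge is simply choosing the Markov exponent $q$ so that $n^{-q/(2\ell)}$ absorbs the factor $n^p$ while the resulting power of $(1+|x|)$ still fits under $1+|x|^{4(1+\ell)}$.
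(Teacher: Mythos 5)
Your proposal is correct and takes essentially the same route as the cited proof of Lemma 3.6 in \cite{MH}: decompose $(\Omega_{n,n})^c$ (respectively $(\Omega_{n,n}^x)^c$) into the failure event for the $D_{n,j}$-threshold and the failure event for the Brownian-increment threshold, control the former by Markov's inequality with a power $q$ high enough that $n^{-q/(2\ell)}$ absorbs $n^p$ (using the uniform $L^q$-bound on $\sup_i D_{n,i}$ or $\sup_i D_{n,i}^x$ from the preceding lemma), and control the latter by a union bound plus the Gaussian tail estimate $\PP(|\Delta W_{n,0}|>1)\le 2e^{-n/2}$. The bookkeeping $q/p = (2\ell)\vee(1/p) \le 4(1+\ell)$, together with $(a+b)^{1/p}\le a^{1/p}+b^{1/p}$ for $p\ge 1$ and the elementary estimate $(1+|x|)^r \le 2^{r-1}(1+|x|^r)$, does indeed land under the stated polynomial bound $c(1+|x|^{4(1+\ell)})$.
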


\begin{proof}
See the proof of \cite[Lemma 3.6]{MH}.
\end{proof}

\begin{lemma}\label{tamprop0}
Let Assumption \ref{assum2} hold and $p \in [1,\infty)$. Then we have that
\[
\sup_{n\in\N} \sup_{i \in \{ 0,1,\dots,n \}} \EE\big[ |\widehat{X}_{n,i/n}|^p \big] < \infty.
\]
Furthermore, there exists $c \in (0,\infty)$ such that for all $x\in\R$,
\[
\sup_{n\in\N} \sup_{i \in \{ 0,1,\dots,n \}} \big(\EE\big[ |\widehat{X}_{n,i/n}^x|^p \big]\big)^{1/p} \leq c \cdot \big(1+|x|^{5(1+\ell)}\big).
\]
\end{lemma}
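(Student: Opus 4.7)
The plan is a good-event/bad-event decomposition that exploits the machinery already in place. Fix $p \geq 1$; for $n \geq 8\lambda p$ and $i \in \{0,\dots,n\}$ I split
\[
\EE\big[|\widehat{X}_{n,i/n}|^p\big] = \EE\big[\ind_{\Omega_{n,i}}|\widehat{X}_{n,i/n}|^p\big] + \EE\big[\ind_{\Omega_{n,i}^c}|\widehat{X}_{n,i/n}|^p\big].
\]
On the good event, the domination bound \eqref{domin} together with the uniform $L_p$-estimate on $\sup_i D_{n,i}$ established in the preceding lemma (valid since $n \geq 8\lambda p$) yields a bound independent of $n$ and $i$. On the complement I use the inclusion $\Omega_{n,i}^c \subset \Omega_{n,n}^c$, H\"older's inequality with conjugates $(2,2)$, and two ingredients: (a) the decay $\PP[\Omega_{n,n}^c] \leq C_{p_0}/n^{p_0}$ available for arbitrary $p_0 \geq 1$; and (b) a crude a priori moment bound $\EE[|\widehat{X}_{n,i/n}|^{2p}] \leq C(1+n^{2p})$ obtained by applying BDG and Gronwall to the integral representation \eqref{intrep}, using the hard bound $|\mu(y)/(1+(1/n)|\mu(y)|)| \leq n$ and the linear growth of $\sigma$ (from (B2)). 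Choosing $p_0 \geq 2p$ makes $(1+n^p)/n^{p_0/2}$ bounded, closing the estimate. For the finitely many remaining indices $n < 8\lambda p$, the scheme $\widehat{X}_{n,i/n}$ is a Borel function of $x_0$ and finitely many Gaussian increments through compositions whose one-step dependence is at most affine-linear in the increment; since $x_0$ has all moments, the maximum over this finite range of $n$ is finite.

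For the second assertion the same three-step strategy is replayed with the $x$-parameterized objects $D_{n,i}^x,\Omega_{n,i}^x,\alpha_{n,i}^x$. The good-event term is bounded by $(\EE[\sup_i|D_{n,i}^x|^p])^{1/p} \leq c(1+|x|)$. For the bad-event term, the $x$-version of the probability lemma gives $\PP[(\Omega_{n,n}^x)^c] \leq c^{p_0}(1+|x|^{4(1+\ell)})^{p_0}/n^{p_0}$ for every $p_0 \geq 1$, while the same BDG--Gronwall reasoning applied to $\widehat{X}_n^x$ produces $\EE[|\widehat{X}_{n,i/n}^x|^{2p}] \leq C(1+|x|)^{2p}(1+n^{2p})$. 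Choosing $p_0 = 2p$ in the H\"older estimate yields
\[
\EE\big[\ind_{(\Omega_{n,i}^x)^c}|\widehat{X}_{n,i/n}^x|^p\big] \leq C(1+|x|)^p(1+|x|^{4(1+\ell)})^p \leq C'\big(1+|x|^{5(1+\ell)}\big)^p,
\]
where the last inequality follows from the elementary estimate $(1+|x|)(1+|x|^{4(1+\ell)}) \leq 4(1+|x|^{5(1+\ell)})$ (check separately on $\{|x|\leq 1\}$ and $\{|x|\geq 1\}$, using $5+4\ell \leq 5(1+\ell)$ on the latter). Taking $p$-th roots gives the claimed estimate with exponent $5(1+\ell)$.

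The main technical obstacle is obtaining a polynomial-in-$n$ crude moment estimate. A direct recursive iteration of the tamed Euler recursion produces only an exponential factor of the form $\exp(c\sqrt{n})$ arising from the accumulation of $n$ Gaussian increments, which no polynomial decay of $\PP[(\Omega_{n,n}^x)^c]$ can absorb. The saving comes from working in continuous time via \eqref{intrep} and exploiting the uniform bound $n$ on the tamed drift; with this choice, Gronwall delivers the clean polynomial-in-$n$ bound that the arbitrarily fast polynomial decay of $\PP[(\Omega_{n,n}^x)^c]$ comfortably dominates once $p_0$ is taken large enough.
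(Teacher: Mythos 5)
Your proof is correct and follows the same good-event/bad-event strategy as the proof of Lemma 3.9 in the reference [MH], which is precisely what the paper cites for this statement: on the good event $\Omega_{n,i}$ one bounds $|\widehat{X}_{n,i/n}|$ by the dominating process $D_{n,i}$ and invokes the uniform $L_p$-bound on $\sup_i D_{n,i}$, while on the bad event one uses H\"older together with the arbitrarily fast polynomial decay of $\PP[(\Omega_{n,n}^x)^c]$ and a crude polynomial-in-$n$ moment bound, then treats the finitely many $n < 8\lambda p$ separately. The BDG--Gronwall derivation of the crude bound $\EE[|\widehat{X}_{n,t}^x|^{2p}] \leq C(1+|x|)^{2p}(1+n^{2p})$ from the integral representation \eqref{intrep} is sound (the tamed drift is pointwise bounded by $n$ and $\sigma$ is globally Lipschitz under (B2)), and the exponent bookkeeping, namely $(1+|x|^{4(1+\ell)})(1+|x|) \lesssim 1+|x|^{5(1+\ell)}$, correctly reproduces the stated exponent $5(1+\ell)$.
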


\begin{proof}
See the proof of \cite[Lemma 3.9]{MH}.
\end{proof}

\begin{lemma}\label{tamprop1}
Let Assumption \ref{assum2} hold and $p \in [1,\infty)$. Then there exists $c \in (0,\infty)$ such that for all $n\in\N$, all $\delta \in [0,1]$ and all $t \in [0,1-\delta]$,
\[
\Big(\EE\Big[ \sup_{s \in [t,t+\delta]} |\widehat{X}_{n,s}-\widehat{X}_{n,t}|^p \Big]\Big)^{1/p} \leq c \cdot \sqrt{\delta}.
\]
In particular,
\[
\sup_{n\in\N} \EE\big[ \|\widehat{X}_n\|_{\infty}^p \big] < \infty.
\]
\end{lemma}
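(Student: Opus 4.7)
The plan is to mirror the modulus estimate for the exact solution $X$ given earlier, replacing \eqref{solprop} with the uniform moment bound of Lemma \ref{tamprop0}. Without loss of generality assume $p\in[2,\infty)$. Starting from the integral representation \eqref{intrep} and the inequality $|a+b|^p\le 2^{p-1}(|a|^p+|b|^p)$, the task reduces to controlling the drift and diffusion contributions
\[
I_1 := \EE\Big[\sup_{s\in[t,t+\delta]}\Big|\int_t^s\frac{\mu(\widehat{X}_{n,\urn})}{1+(1/n)|\mu(\widehat{X}_{n,\urn})|}\,\dif r\Big|^p\Big], \quad I_2 := \EE\Big[\sup_{s\in[t,t+\delta]}\Big|\int_t^s\sigma(\widehat{X}_{n,\urn})\,\dif W_r\Big|^p\Big]
\]
separately.

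To handle $I_1$, I would use the elementary bound $|\mu(x)|/(1+(1/n)|\mu(x)|)\le |\mu(x)|$ together with the polynomial growth $|\mu(x)|\le c(1+|x|^{1+\ell})$ that follows from Assumption \ref{assum2}(B1) and (B3). Then H\"{o}lder in time and Lemma \ref{tamprop0} with exponent $(1+\ell)p$ yield $I_1\le c\,\delta^p\cdot(1+\sup_{n,i}\EE[|\widehat{X}_{n,i/n}|^{(1+\ell)p}])\le c\,\delta^p$. The taming factor plays no role here beyond this pointwise bound; the same step would go through for any explicit scheme that has the required moments.

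For $I_2$ the integrand $\sigma(\widehat{X}_{n,\urn})$ is adapted to $(\F_r)$ since $\urn\le r$ and the grid values are $\F_{\urn}$-measurable, so Burkholder--Davis--Gundy applies and, combined with H\"{o}lder in time and the linear growth of $\sigma$ provided by the global Lipschitz property in Assumption \ref{assum2}(B2), it yields $I_2\le c\,\delta^{p/2}\cdot(1+\sup_{n,i}\EE[|\widehat{X}_{n,i/n}|^p])\le c\,\delta^{p/2}$, again by Lemma \ref{tamprop0}. Since $\delta\in[0,1]$ we have $\delta^p\le\delta^{p/2}$, and combining the two bounds gives the desired $c\sqrt{\delta}$ estimate.

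For the ``in particular'' claim, I would specialize the main bound to $t=0$ and $\delta=1$ to obtain $\EE[\sup_{s\in[0,1]}|\widehat{X}_{n,s}-x_0|^p]\le c$ uniformly in $n$, then use $\|\widehat{X}_n\|_{\infty}\le|x_0|+\sup_{s\in[0,1]}|\widehat{X}_{n,s}-x_0|$ together with $\EE[|x_0|^p]<\infty$ from Assumption \ref{assum2}. The only real obstacle is the availability of the high-order moment bound on the grid values that feeds into $I_1$; this is precisely why Lemma \ref{tamprop0} is stated for every $p\in[1,\infty)$ rather than only for the target exponent.
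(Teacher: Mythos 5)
Your argument is correct and mirrors the paper's proof essentially step for step: split drift and diffusion via the elementary inequality, bound the tamed drift increment by $|\mu(x)|$ and polynomial growth, apply H\"older and a BDG-type moment inequality, and invoke Lemma \ref{tamprop0}. The explicit handling of the ``in particular'' claim via the triangle inequality and $\EE[|x_0|^p]<\infty$ is a minor detail the paper leaves implicit; otherwise the two proofs coincide.
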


\begin{proof}
Without loss of generality we may assume that $p \in [2,\infty)$. It follows from \eqref{ine} that
\begin{align*}
&\EE\Big[ \sup_{s \in [t,t+\delta]} |\widehat{X}_{n,s}-\widehat{X}_{n,t}|^p \Big] \\
&\leq 2^{p-1} \EE\bigg[ \sup_{s \in [t,t+\delta]} \bigg|\int_t^s \frac{\mu(\widehat{X}_{n,\urn})}{1 + 1/n \cdot |\mu(\widehat{X}_{n,\urn})|} \, \dif r\bigg|^p \bigg] 
+ 2^{p-1} \EE\bigg[ \sup_{s \in [t,t+\delta]} \bigg|\int_t^s \sigma(\widehat{X}_{n,\urn}) \, \dif W_r\bigg|^p \bigg].
\end{align*}
Hence, using the H\"{o}lder inequality, \cite[Theorem 1.7.2]{XM} and Assumption \ref{assum2} we obtain that there exists $c \in (0,\infty)$ such that
\begin{align*}
&\EE\Big[ \sup_{s \in [t,t+\delta]} |\widehat{X}_{n,s}-\widehat{X}_{n,t}|^p \Big] \\
&\leq (2\delta)^{p-1} \EE\bigg[ \int_t^{t+\delta} |\mu(\widehat{X}_{n,\urn})|^p \, \dif r \bigg] 
+ \frac12 \bigg(\frac{2 p^3}{p-1}\bigg)^{p/2} \delta^{(p-2)/2} \EE\bigg[ \int_t^{t+\delta} |\sigma(\widehat{X}_{n,\urn})|^p \, \dif r \bigg] \\
&\leq c \delta^p \cdot \Big(1 + \sup_{t \in [0,1]} \EE\big[ |\widehat{X}_{n,\utn}|^{(1+\ell)p} \big]\Big) 
+ c \delta^{p/2} \cdot \Big(1 + \sup_{t \in [0,1]} \EE\big[ |\widehat{X}_{n,\utn}|^p \big]\Big),
\end{align*}
which, together with Lemma \ref{tamprop0}, completes the proof.
\end{proof}

\begin{lemma}\label{tamprop2}
Let Assumption \ref{assum2} hold and $p \in [1,\infty)$. Then there exists $c \in (0,\infty)$ such that for all $x\in\R$, all $n\in\N$, all $\delta \in [0,1]$ and all $t \in [0,1-\delta]$,
\[
\Big(\EE\Big[ \sup_{s \in [t,t+\delta]} |\widehat{X}_{n,s}^x-\widehat{X}_{n,t}^x|^p \Big]\Big)^{1/p} \leq c \cdot \big(1+|x|^{5(1+\ell)^2}\big) \cdot \sqrt{\delta}.
\]
In particular,
\[
\sup_{n\in\N} \big(\EE\big[ \|\widehat{X}_n^x\|_{\infty}^p \big]\big)^{1/p} \leq c \cdot \big(1+|x|^{5(1+\ell)^2}\big).
\]
\end{lemma}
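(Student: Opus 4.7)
The plan is to mirror the argument of Lemma \ref{tamprop1}, but to retain an explicit polynomial bookkeeping in $|x|$ via Lemma \ref{tamprop0}. As there, it suffices to treat $p \in [2,\infty)$.

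First, I would use the integral representation \eqref{intrep} together with the elementary inequality \eqref{ine} to decompose the running supremum of $|\widehat{X}_{n,s}^x - \widehat{X}_{n,t}^x|^p$ into a tamed drift integral and an It\^{o} integral. To the former I would apply H\"older's inequality to convert the supremum of the integral into an $L^p$ integral on $[t,t+\delta]$, and to the latter I would apply the BDG-type estimate of \cite[Theorem 1.7.2]{XM}. This step is a verbatim analogue of the first stage of the proof of Lemma \ref{tamprop1} and introduces no $x$-dependence.

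Next, I would use (B3) to obtain polynomial growth $|\mu(y)| \leq c(1+|y|^{1+\ell})$ inside each piece $(\xi_{i-1},\xi_i)$ (by comparing with a fixed reference value in that piece), and the global Lipschitz continuity of $\sigma$ from (B2) to obtain $|\sigma(y)| \leq c(1+|y|)$. Using that the tamed drift satisfies $|\mu(y)|/(1 + |\mu(y)|/n) \leq |\mu(y)|$, the remaining task reduces to bounding $\EE[|\widehat{X}_{n,\urn}^x|^{(1+\ell)p}]$ and $\EE[|\widehat{X}_{n,\urn}^x|^p]$. Both are controlled by Lemma \ref{tamprop0}, which yields $(\EE[|\widehat{X}_{n,i/n}^x|^q])^{1/q} \leq c(1+|x|^{5(1+\ell)})$ uniformly in $n$ and $i$ for every $q \in [1,\infty)$.

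The principal piece of bookkeeping---and essentially the only nontrivial point---is that taking $q=(1+\ell)p$ in the $\mu$-term produces an $|x|$-exponent of $5(1+\ell)\cdot(1+\ell)=5(1+\ell)^2$, whereas the $\sigma$-term only yields $5(1+\ell)$. Since $\delta \in [0,1]$ gives $\delta^{p} \leq \delta^{p/2}$ for $p \geq 2$, the drift contribution of order $\delta \cdot (1+|x|^{5(1+\ell)^2})$ is absorbed by $\sqrt{\delta}\cdot(1+|x|^{5(1+\ell)^2})$, so the two contributions combine into the stated bound $c\sqrt{\delta}\,(1+|x|^{5(1+\ell)^2})$. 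The \emph{In particular} statement then follows by taking $t=0$ and $\delta=1$, noting $\|\widehat{X}_n^x\|_{\infty} \leq |x| + \sup_{s\in[0,1]}|\widehat{X}_{n,s}^x - x|$, with the $|x|$ term absorbed into $1+|x|^{5(1+\ell)^2}$.
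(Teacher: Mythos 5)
Your proposal is correct and follows exactly the approach the paper has in mind: the paper's proof of this lemma is simply a pointer to the proof of Lemma \ref{tamprop1}, which means replaying that decomposition (H\"older plus BDG on \eqref{intrep}) while keeping track of the $x$-dependence via Lemma \ref{tamprop0}. Your bookkeeping is accurate---the $\mu$-term requires the $(1+\ell)p$-th moment, and raising the base bound $1+|x|^{5(1+\ell)}$ to the power $1+\ell$ yields the stated exponent $5(1+\ell)^2$, with the weaker $\sigma$-term and the $\delta^p\leq\delta^{p/2}$ absorption handled as you describe.
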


\begin{proof}
See the proof of Lemma \ref{tamprop1}.
\end{proof}

\subsection{A Markov property and occupation time estimates for the time-continuous tamed Euler scheme}

The following lemma provides a Markov property of the time-continuous tamed Euler scheme $\widehat{X}_n$ relative to the gridpoints $1/n,2/n,\dots,1$.

\begin{lemma}\label{markov}
For all $n\in\N$, all $j \in \{ 0,\dots,n-1 \}$ and $\PP^{\widehat{X}_{n,j/n}}$-almost all $x\in\R$ we have
\[
\PP^{(\widehat{X}_{n,t})_{t \in [j/n,1]} | \F_{j/n}} = \PP^{(\widehat{X}_{n,t})_{t \in [j/n,1]} | \widehat{X}_{n,j/n}}
\]
as well as
\[
\PP^{(\widehat{X}_{n,t})_{t \in [j/n,1]} | \widehat{X}_{n,j/n} = x} = \PP^{(\widehat{X}_{n,t}^x)_{t \in [0,1-j/n]}}.
\]
\end{lemma}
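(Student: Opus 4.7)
The plan is to prove this by reducing both equalities to a single functional representation lemma for the tamed Euler scheme. Since the recursion defining $\widehat{X}_n$ is deterministic given the initial value and the Brownian path, there should exist a measurable map $\Phi_n : \R \times C([0,1];\R) \to C([0,1];\R)$ such that, almost surely,
\[
(\widehat{X}_{n,t})_{t \in [j/n,1]} = \Phi_n\bigl(\widehat{X}_{n,j/n},\, (W_{s+j/n}-W_{j/n})_{s\in[0,1-j/n]}\bigr).
\]
To construct $\Phi_n$, I would unwind the recursion: starting from an input $y \in \R$ and a continuous path $w$ with $w(0)=0$, I would define the scheme values at the gridpoints $j/n, (j+1)/n, \dots, 1$ inductively via the tamed increment formula with Brownian increments replaced by increments of $w$, and then linearly interpolate the drift term together with the exact path $w$ for the diffusion term on each subinterval. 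Measurability of $\Phi_n$ is immediate since it is built from compositions and sums of continuous and measurable functions.

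Once the representation is in place, the two equalities become standard consequences of independence and the time-homogeneity of the driving noise. First, the shifted Brownian motion $\widetilde W := (W_{s+j/n}-W_{j/n})_{s \geq 0}$ is independent of $\F_{j/n}$ (by the defining property of Brownian motion and the normal filtration) and has the same law as $W$. Since $\widehat{X}_{n,j/n}$ is $\F_{j/n}$-measurable and $\widetilde W$ is independent of $\F_{j/n}$, the factorisation lemma for regular conditional distributions (see, e.g., Kallenberg, Foundations of Modern Probability, Proposition 6.13) yields
\[
\PP^{(\widehat{X}_{n,t})_{t\in[j/n,1]}\,|\,\F_{j/n}} \;=\; \bigl(y \mapsto \PP^{\Phi_n(y,\widetilde W)}\bigr)\big|_{y=\widehat{X}_{n,j/n}},
\]
and the right-hand side depends on $\F_{j/n}$ only through $\widehat{X}_{n,j/n}$, which is precisely the first assertion (and identifies the conditional law as $\PP^{\Phi_n(x,\widetilde W)}$ on the event $\widehat{X}_{n,j/n}=x$, for $\PP^{\widehat{X}_{n,j/n}}$-almost every $x$).

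For the second equality I would apply the very same representation to $\widehat{X}_n^x$: the recursion defining $\widehat{X}_n^x$ is identical to that of $\widehat{X}_n$ except that the initial value is the constant $x$ and the full Brownian motion $W$ is used on the interval $[0,1-j/n]$, so almost surely
\[
(\widehat{X}_{n,t}^x)_{t\in[0,1-j/n]} \;=\; \Phi_n\bigl(x,\, (W_s)_{s\in[0,1-j/n]}\bigr).
\]
Combining this with the identity $\PP^{\widetilde W} = \PP^{W}$ on $C([0,1-j/n];\R)$ gives $\PP^{\Phi_n(x,\widetilde W)} = \PP^{(\widehat{X}_{n,t}^x)_{t\in[0,1-j/n]}}$, which is the second assertion.

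The only delicate point is the rigorous construction of the functional representation $\Phi_n$ together with the verification that the almost-sure identities actually hold on suitable nullsets independent of $x$; I expect this to be the main obstacle, but it is essentially bookkeeping since the tamed Euler scheme is, by its very definition, given explicitly as a measurable functional of its initial value and the driving path. All other steps are standard consequences of the independence of Brownian increments and the Doob--Dynkin lemma.
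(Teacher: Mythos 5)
Your proposal is correct and follows essentially the same approach as the proof the paper delegates to (Lemma 3 of the reference \cite{TM}): namely, expressing the time-continuous tamed Euler scheme from a gridpoint onward as a measurable functional $\Phi_n$ of its value at that gridpoint and the shifted increment process of the driving Brownian motion, and then invoking independence of Brownian increments from $\F_{j/n}$, stationarity of those increments, and the standard factorization of regular conditional distributions. The only cosmetic point worth noting is that the gridpoints on $[j/n,1]$ shift to the gridpoints on $[0,1-j/n]$ under $t\mapsto t-j/n$, which is exactly what makes the same functional $\Phi_n$ serve for both $(\widehat{X}_{n,t})_{t\in[j/n,1]}$ and $(\widehat{X}_{n,t}^x)_{t\in[0,1-j/n]}$; you implicitly use this, and it holds because both schemes restart their recursion at each multiple of $1/n$.
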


\begin{proof}
See the proof of \cite[Lemma 3]{TM}.
\end{proof}

Next, we provide an estimate for the expected occupation time of a neighborhood of a non-zero of $\sigma$ by the time-continuous tamed Euler scheme $\widehat{X}_n^x$.

\begin{lemma}\label{occup}
Let Assumption \ref{assum2} hold. Let $\xi\in\R$ satisfy $\sigma(\xi) \neq 0$. Then there exists $c \in (0,\infty)$ such that for all $x\in\R$, all $n\in\N$ and all $\eps \in (0,\infty)$,
\[
\int_0^1 \PP\big(\big\{ |\widehat{X}_{n,t}^x-\xi| \leq \eps \big\}\big) \, \dif t 
\leq c \cdot \big(1+|x|^{10(1+\ell)^2}\big) \cdot \bigg(\eps+\frac{1}{\sqrt{n}}\bigg).
\]
\end{lemma}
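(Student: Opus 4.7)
My plan is to exploit the non-degeneracy of $\sigma$ near $\xi$ so that on each grid sub-interval $[i/n, (i+1)/n]$ the tamed Euler scheme has, conditionally on $\F_{i/n}$, a Gaussian distribution whose variance is bounded below when $\widehat{X}_{n,i/n}^x$ lies close to $\xi$. First, the global Lipschitz continuity of $\sigma$ from assumption (B2) together with $\sigma(\xi) \neq 0$ provides constants $\delta_0, c_0 \in (0,\infty)$ with $|\sigma(y)| \geq c_0$ whenever $|y - \xi| \leq \delta_0$. One may assume $\eps \leq \delta_0 \wedge 1$, since otherwise the trivial bound $\int_0^1 1 \, \dif t = 1$ already gives the claim after enlarging the constant.

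I would then decompose the integral as $\sum_{i=0}^{n-1} \int_{i/n}^{(i+1)/n} \PP(|\widehat{X}_{n,t}^x - \xi| \leq \eps) \, \dif t$ and, on each sub-interval, condition on $\F_{i/n}$: by the explicit form of the scheme, $\widehat{X}_{n,t}^x$ given $\F_{i/n}$ is Gaussian with mean $\widehat{X}_{n,i/n}^x + \widetilde{\mu}_i (t - i/n)$, where $\widetilde{\mu}_i := \mu(\widehat{X}_{n,i/n}^x)/(1 + |\mu(\widehat{X}_{n,i/n}^x)|/n)$, and with variance $\sigma^2(\widehat{X}_{n,i/n}^x)(t - i/n)$. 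Introducing the good event $A_{n,i}^x := \{|\widehat{X}_{n,i/n}^x - \xi| \leq \delta_0\}$, on which $|\sigma(\widehat{X}_{n,i/n}^x)| \geq c_0$, the standard Gaussian density bound yields
\[
\ind_{A_{n,i}^x} \cdot \PP(|\widehat{X}_{n,t}^x - \xi| \leq \eps \mid \F_{i/n}) \leq \ind_{A_{n,i}^x} \cdot \min\bigg(1, \frac{2\eps}{c_0 \sqrt{2\pi(t - i/n)}}\bigg),
\]
whose integral in $t \in [i/n, (i+1)/n]$ is of order $\eps^2/c_0^2 + \eps/(c_0 \sqrt{n})$. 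On the complement $(A_{n,i}^x)^c$, any $t$ with $|\widehat{X}_{n,t}^x - \xi| \leq \eps$ forces the one-step excursion $|\widehat{X}_{n,t}^x - \widehat{X}_{n,i/n}^x|$ to exceed $\delta_0/2$; this is controlled via Chebyshev together with the moment bound of Lemma \ref{tamprop2}, giving a higher-order $1/n$-contribution. The polynomial factor $1 + |x|^{10(1+\ell)^2}$ appears from Cauchy--Schwarz used to separate these indicator events from the Gaussian bounds, combined with the $L^{2p}$ moment estimate of Lemma \ref{tamprop2}; its square produces the stated exponent.

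Summing the contributions over $i$, together with the trivial boundary term for $i=0$ bounded by $1/n \leq 1/\sqrt{n}$, would then deliver the claimed $\eps + 1/\sqrt{n}$. The main obstacle lies in controlling $\sum_i \PP(A_{n,i}^x)$: bounded naively by $n$, it produces a spurious factor $\sqrt{n}$ when multiplied by the per-sub-interval bound $\eps/(c_0\sqrt{n})$. Obtaining the sharp $\eps$-contribution therefore requires a density-type estimate of order $\sqrt{n/i}$ for $\widehat{X}_{n,i/n}^x$ at grid points, which can be established by an induction leveraging the one-step Gaussian transition, or alternatively by a self-referential recursion on the occupation-time quantity itself; this is where the bulk of the technical work resides.
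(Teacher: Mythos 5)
There is a genuine gap, and you have in fact located it yourself in your final paragraph: the per-interval conditional Gaussian bound of order $\eps/(c_0\sqrt{n})$ summed over the $n$ sub-intervals yields $\eps\sqrt{n}$, not $\eps$, so the whole argument hinges on a density-type estimate of order $\sqrt{n/i}$ for the law of $\widehat{X}_{n,i/n}^x$ near $\xi$ (equivalently, on controlling $\tfrac1n\sum_i \PP(A_{n,i}^x)$, which is itself an occupation-time estimate of exactly the kind the lemma asserts). You defer this to ``an induction leveraging the one-step Gaussian transition, or alternatively a self-referential recursion,'' but neither is carried out, and neither is routine: the drift of the tamed scheme is superlinearly growing and discontinuous, the one-step transitions are Gaussian only conditionally with state-dependent variance that may degenerate away from $\xi$, and propagating a pointwise density bound through $i$ steps would amount to proving heat-kernel-type bounds for the scheme -- a substantially harder task than the lemma itself. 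The self-referential recursion is circular as stated, since the quantity you would feed back in is (up to the non-degeneracy cut-off) the quantity you are trying to bound. So the core of the proof is missing.

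For comparison, the paper avoids grid-point density estimates entirely by working pathwise with semimartingale local time: from the integral representation \eqref{intrep}, $\widehat{X}_n^x$ is a continuous semimartingale with quadratic variation $\int_0^t \sigma^2(\widehat{X}_{n,\usn}^x)\,\dif s$; Tanaka's formula plus the Burkholder--Davis--Gundy inequality and the moment bounds of Lemmas \ref{tamprop0} and \ref{tamprop2} give $\EE[L_1^a(\widehat{X}_n^x)] \le c\,(1+|x|^{5(1+\ell)^2})$ uniformly in $a$, and the occupation time formula then yields $\EE\big[\int_0^1 \ind_{[\xi-\eps,\xi+\eps]}(\widehat{X}_{n,t}^x)\,\sigma^2(\widehat{X}_{n,\utn}^x)\,\dif t\big] \le c\,(1+|x|^{5(1+\ell)^2})\,\eps$ directly, with the $1/\sqrt{n}$ term coming from replacing $\sigma^2(\widehat{X}_{n,\utn}^x)$ by $\sigma^2(\widehat{X}_{n,t}^x)$ and the non-degeneracy of $\sigma$ at $\xi$ only used at the very end to divide by $\kappa$. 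If you want to salvage your conditional-Gaussian approach, you would need to supply the missing grid-point density (or discrete occupation) estimate in full; otherwise the local-time route is the one that actually closes the argument.
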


\begin{proof}
Let $x\in\R$ and $n\in\N$. By \eqref{intrep}, Assumption \ref{assum2} and Lemma \ref{tamprop2} we see that $\widehat{X}_n^x$ is a continuous semi-martingale with quadratic variation
\begin{equation}\label{qv}
\langle \widehat{X}_n^x \rangle_t = \int_0^t \sigma^2(\widehat{X}_{n,\usn}^x) \, \dif s, \quad t \in [0,1].
\end{equation}
For $a\in\R$, let $L^a(\widehat{X}_n^x) = (L_t^a(\widehat{X}_n^x))_{t \in [0,1]}$ denote the local time of $\widehat{X}_n^x$ at the point $a$. Thus, for all $a\in\R$ and all $t \in [0,1]$,
\begin{align*}
|\widehat{X}_{n,t}^x-a| &= |x-a| + \int_0^t \sgn(\widehat{X}_{n,s}^x-a) \cdot \frac{\mu(\widehat{X}_{n,\usn}^x)}{1 + 1/n \cdot | \mu(\widehat{X}_{n,\usn}^x) |} \, \dif s \\
&\quad + \int_0^t \sgn(\widehat{X}_{n,s}^x-a) \cdot \sigma(\widehat{X}_{n,\usn}^x) \, \dif W_s + L_t^a(\widehat{X}_n^x),
\end{align*}
where $\sgn(z) = \ind_{(0,\infty)}(z) - \ind_{(-\infty,0]}(z)$ for $z\in\R$, see, e.g. \cite[Chap. VI]{DR}. Hence, for all $a\in\R$ and all $t \in [0,1]$,
\[
L_t^a(\widehat{X}_n^x) \leq |\widehat{X}_{n,t}^x-x| + \int_0^t |\mu(\widehat{X}_{n,\usn}^x)| \, \dif s + \bigg| \int_0^t \sgn(\widehat{X}_{n,s}^x-a) \cdot \sigma(\widehat{X}_{n,\usn}^x) \, \dif W_s \bigg|.
\]
  
Using the H\"{o}lder inequality, the Burkholder--Davis--Gundy inequality, Assumption \ref{assum2}, Lemma \ref{tamprop0} and \ref{tamprop2} we conclude that there exists $c \in (0,\infty)$ such that for all $x\in\R$, all $n\in\N$, all $a\in\R$ and all $t \in [0,1]$,
\begin{equation}\label{local1}
\EE\big[L_t^a(\widehat{X}_n^x)\big] \leq c \cdot \big(1+|x|^{5(1+\ell)^2}\big).
\end{equation}
Using \eqref{qv} and \eqref{local1} we obtain by the occupation time formula that there exists $c \in (0,\infty)$ such that for all $x\in\R$, all $n\in\N$ and all $\eps \in (0,\infty)$,
\begin{equation}\label{local2}
\EE\bigg[ \int_0^1 \ind_{[\xi-\eps,\xi+\eps]}(\widehat{X}_{n,t}^x) \cdot \sigma^2(\widehat{X}_{n,\utn}^x) \, \dif t \bigg]
= \int_{\R} \ind_{[\xi-\eps,\xi+\eps]}(a) \cdot \EE\big[L_1^a(\widehat{X}_n^x)\big] \, \dif a
\leq c \cdot \big(1+|x|^{5(1+\ell)^2}\big) \cdot \eps.
\end{equation}
  
Using the Lipschitz continuity of $\sigma$ and Lemma \ref{tamprop2} we obtain that there exist $c_1,c_2 \in (0,\infty)$ such that for all $x\in\R$ and all $n\in\N$,
\begin{align}\label{local3}
\EE\bigg[ \int_0^1 |\sigma^2(\widehat{X}_{n,t}^x)-\sigma^2(\widehat{X}_{n,\utn}^x)| \, \dif t \bigg]
&\leq c_1 \cdot \int_0^1 \EE\big[ |\widehat{X}_{n,t}^x-\widehat{X}_{n,\utn}^x| \cdot \big(1+\|\widehat{X}_n^x\|_{\infty}\big) \big] \, \dif t \notag \\
&\leq c_2 \cdot \big(1+|x|^{10(1+\ell)^2}\big) \cdot \frac{1}{\sqrt{n}}.
\end{align}
  
Since $\sigma$ is continuous and $\sigma(\xi) \neq 0$, there exist $\kappa,\eps_0 \in (0,\infty)$ such that
\[
\inf_{|z-\xi| < \eps_0} \sigma^2(z) \geq \kappa.
\]
Observing \eqref{local2} and \eqref{local3}, we conclude that there exists $c \in (0,\infty)$ such that for all $x\in\R$, all $n\in\N$ and all $\eps \in (0,\eps_0]$,
\begin{align*}
&\int_0^1 \PP\big(\big\{ |\widehat{X}_{n,t}^x-\xi| \leq \eps \big\}\big) \, \dif t \\
&= \frac{1}{\kappa} \cdot \EE\bigg[ \int_0^1 \kappa \cdot \ind_{[\xi-\eps,\xi+\eps]}(\widehat{X}_{n,t}^x) \, \dif t \bigg] \\
&\leq \frac{1}{\kappa} \cdot \EE\bigg[ \int_0^1 \ind_{[\xi-\eps,\xi+\eps]}(\widehat{X}_{n,t}^x) \cdot \sigma^2(\widehat{X}_{n,t}^x) \, \dif t \bigg] \\
&\leq \frac{1}{\kappa} \cdot \EE\bigg[ \int_0^1 \big(\ind_{[\xi-\eps,\xi+\eps]}(\widehat{X}_{n,t}^x) \cdot \sigma^2(\widehat{X}_{n,\utn}^x) + |\sigma^2(\widehat{X}_{n,t}^x) - \sigma^2(\widehat{X}_{n,\utn}^x)|\big) \, \dif t \bigg] \\
&\leq \frac{c}{\kappa} \cdot \big(1+|x|^{5(1+\ell)^2}+|x|^{10(1+\ell)^2}\big) \cdot \bigg(\eps + \frac{1}{\sqrt{n}}\bigg),
\end{align*}
which completes the proof.
\end{proof}

\begin{lemma}\label{central00}
Let Assumption \ref{assum2} hold. Then there exists $c \in (0,\infty)$ such that for all $x\in\R$ and all $n\in\N$,
\[
\frac{|\mu(x)|}{1 + 1/n \cdot |\mu(x)|} \leq c \cdot n^{1-\gamma} \cdot \big(1+|x|\big),
\]
where $\gamma = \frac{1}{1+\ell} \wedge \frac12$.
\end{lemma}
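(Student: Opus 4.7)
The plan is to combine two ingredients. First, I would establish the global polynomial growth bound
\[
|\mu(x)| \leq K(1+|x|^{1+\ell}), \qquad x \in \R,
\]
extracted from (B3). Second, I would use the elementary inequalities $\frac{a}{1+a/n} \leq \min(a,n) \leq a^{\gamma} n^{1-\gamma}$, valid for $a \geq 0$ and $\gamma \in [0,1]$ (the min-bound is obvious from considering the two sub-cases $a \leq n$ and $a > n$; the interpolation follows by writing $\min(a,n)^{1-\gamma} \leq n^{1-\gamma}$ and $\min(a,n)^{\gamma} \leq a^{\gamma}$ and multiplying).

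Granted these, the lemma would follow directly: with $\gamma = \tfrac{1}{1+\ell} \wedge \tfrac{1}{2} \leq \tfrac{1}{1+\ell}$, subadditivity of $t \mapsto t^{\gamma}$ together with $(1+\ell)\gamma \leq 1$ (so that $|x|^{(1+\ell)\gamma} \leq 1+|x|$) gives
\[
\frac{|\mu(x)|}{1+|\mu(x)|/n} \leq |\mu(x)|^{\gamma} \cdot n^{1-\gamma} \leq K^{\gamma}\bigl(1+|x|^{(1+\ell)\gamma}\bigr) \cdot n^{1-\gamma} \leq 2K^{\gamma} (1+|x|) \cdot n^{1-\gamma},
\]
which is the desired bound with $c := 2K^{\gamma}$.

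For the polynomial growth step, I would argue in the same spirit as the proof of Lemma \ref{lambda}. On each outer (unbounded) piece, say $(\xi_{k},\infty)$, fix a reference point $\eta_{2} \in (\xi_{k},\infty)$ and apply the polynomial Lipschitz bound from (B3):
\[
|\mu(x)| \leq |\mu(\eta_{2})| + c\bigl(1+|x|^{\ell}+|\eta_{2}|^{\ell}\bigr) \cdot |x-\eta_{2}| \leq C(1+|x|^{1+\ell}),
\]
and symmetrically on $(-\infty,\xi_{1})$. On each of the (finitely many) bounded intermediate pieces $(\xi_{i-1},\xi_{i})$ with $1 \leq i \leq k$, the same argument with an interior reference point gives a uniform bound on $|\mu|$, since $|x|$ itself stays bounded. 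Finally, the finitely many values $\mu(\xi_{1}),\dots,\mu(\xi_{k})$ at the discontinuity points can be absorbed into the constant $K$.

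The main obstacle, though modest, is precisely this piecewise handling of $\mu$: because (B3) controls the Lipschitz behaviour only within each open piece, reference points must be chosen piece by piece, and $\mu$ at the discontinuity points $\xi_{i}$ themselves is only known to be measurable. As a side remark, the truncation $\wedge \tfrac{1}{2}$ in the definition of $\gamma$ plays no role in the above argument---any $\gamma \in [0,\tfrac{1}{1+\ell}]$ works---and is presumably imposed for the sake of the subsequent $L_{p}$-error analysis, which will need $\gamma \leq \tfrac{1}{2}$.
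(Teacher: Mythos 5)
Your proposal is correct and follows essentially the same strategy as the paper: extract the polynomial growth bound $|\mu(x)|\leq K\,(1+|x|)^{1+\ell}$ from (B3) (which the paper invokes without detail, via the same piecewise argument used in its Lemma \ref{lambda}), and then trade the taming against the power of $|x|$ through a $\gamma$-interpolation with $(1+\ell)\gamma\leq 1$. The only difference is cosmetic: the paper applies the weighted AM--GM inequality $a+b\geq \gamma^{-\gamma}(1-\gamma)^{-(1-\gamma)}a^{\gamma}b^{1-\gamma}$ to the tamed denominator, while you use the slightly more elementary bound $\frac{a}{1+a/n}\leq\min(a,n)\leq a^{\gamma}n^{1-\gamma}$; your closing remark that only $\gamma\leq\frac{1}{1+\ell}$ is needed here is also accurate.
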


\begin{proof}
Using Assumption \ref{assum2} and the fact that for all $a, b \in (0,\infty)$,
\[
a+b \geq \frac{a^{\gamma}b^{1-\gamma}}{\gamma^{\gamma}(1-\gamma)^{1-\gamma}},
\]
we conclude that there exist $c_1,c_2 \in (0,\infty)$ such that for all $x\in\R$ and all $n\in\N$,
\begin{align*}
\frac{|\mu(x)|}{1 + 1/n \cdot |\mu(x)|} 
&\leq \frac{c_1 \cdot \big(1+|x|\big)^{1+\ell}}{1 + 1/n \cdot c_1 \cdot \big(1+|x|\big)^{1+\ell}} \\
&= \frac{c_1 \cdot n \cdot \big(1+|x|\big)}{n \cdot \big(1+|x|\big)^{-\ell} + c_1 \cdot \big(1+|x|\big)} \\
&\leq \frac{c_2 \cdot n \cdot \big(1+|x|\big)}{n^{\gamma} \cdot \big(1+|x|\big)^{-\ell\gamma} \cdot \big(1+|x|\big)^{1-\gamma}} \\
&= c_2 \cdot n^{1-\gamma} \cdot \big(1+|x|\big)^{(1+\ell)\gamma} \\
&\leq c_2 \cdot n^{1-\gamma} \cdot \big(1+|x|\big),
\end{align*}
which completes the proof.
\end{proof}

The following result shows how to transfer the condition of a sign change of $\widehat{X}_n-\xi$ at time $t$ relative to its sign at the grid point $\utn$ to a condition on the distance of $\widehat{X}_n$ and $\xi$ at the time $\utn-(t-\utn)$.

\begin{lemma}\label{central}
Let Assumption \ref{assum2} hold. Let $\xi\in\R$. Then there exists $c \in (0,\infty)$ such that for all $n\in\N$, all $0 \leq s \leq t \leq 1$ with $\utn - s \geq 1/n$ and all $A\in\F_s$,
\begin{align}\label{central01}
&\PP\big(A \cap \big\{ (\widehat{X}_{n,t}-\xi) \cdot (\widehat{X}_{n,\utn}-\xi) \leq 0 \big\}\big) \notag \\
&\leq \frac{c}{n} \cdot \PP(A)
+ c \cdot \int_{\R} \PP\bigg(A \cap \bigg\{ |\widehat{X}_{n,\utn-(t-\utn)}-\xi| \leq \frac{c}{\sqrt{n}} \big(1+|z|\big) \bigg\}\bigg) \cdot e^{-\frac{z^2}{2}} \, \dif z.
\end{align}
\end{lemma}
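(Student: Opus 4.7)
Write $h := t - \utn \in [0, 1/n)$ and abbreviate $t' := \utn - h = \utn - (t-\utn)$. The case $h = 0$ is trivial, since the sign-change event then reduces to $\{\widehat{X}_{n,\utn} = \xi\}$ on which $\widehat{X}_{n,t'} = \xi$ so every $z$ in the integral picks up the entire left-hand side; assume $h > 0$. Then $t' \in [\utn - 1/n, \utn)$ and $s \le \utn - 1/n \le t'$, so $A \in \F_s \subseteq \F_{t'} \subseteq \F_\utn$. Two independent standard Gaussians will drive the integral on the right-hand side: set
\[
Z_1 := (W_t - W_\utn)/\sqrt{h}, \qquad Z_2 := (W_\utn - W_{t'})/\sqrt{h},
\]
where $Z_1$ is independent of $\F_\utn$ and $Z_2$ is independent of $\F_{t'}$; in particular, each $Z_j$ is independent of the pair $(A, \widehat{X}_{n,t'})$.

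The crucial step is to show that, outside an exceptional set of probability at most $c\,\PP(A)/n$, the sign-change event forces $|\widehat{X}_{n,\utn}|$ and $|\widehat{X}_{n,\utn - 1/n}|$ to be bounded by an absolute constant $M = M(\xi)$. From the Euler recursion
\[
\widehat{X}_{n,t} - \widehat{X}_{n,\utn} = \frac{\mu(\widehat{X}_{n,\utn})}{1 + |\mu(\widehat{X}_{n,\utn})|/n}\,h + \sigma(\widehat{X}_{n,\utn})\sqrt{h}\,Z_1
\]
and the sign-change inequality $|\widehat{X}_{n,\utn} - \xi| \le |\widehat{X}_{n,t} - \widehat{X}_{n,\utn}|$, Lemma \ref{central00} and $|\sigma(x)| \le c(1+|x|)$ give $|\widehat{X}_{n,\utn} - \xi| \le c(1+|\widehat{X}_{n,\utn}|)(n^{-\gamma} + |Z_1|/\sqrt{n})$. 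On the event $G_1 := \{c(n^{-\gamma}+|Z_1|/\sqrt{n}) \le 1/2\}$, combining this with $|\widehat{X}_{n,\utn}| \le |\xi| + |\widehat{X}_{n,\utn}-\xi|$ and bootstrapping yields $|\widehat{X}_{n,\utn}| \le 2|\xi|+2$; by Gaussian tail estimates $\PP(G_1^c) \le c/n$, and since $Z_1 \perp A$ the intersection contributes at most $c\,\PP(A)/n$. A parallel argument on the Euler increment over $[\utn - 1/n, \utn]$, driven by the auxiliary Gaussian $Z_3 := \sqrt{n}(W_\utn - W_{\utn - 1/n})$ independent of $\F_{\utn - 1/n} \ni A$, enforces $|\widehat{X}_{n, \utn - 1/n}| \le M'$ off a second bad set of probability at most $c\,\PP(A)/n$.

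On the resulting good set, $\mu$ and $\sigma$ evaluated at $\widehat{X}_{n,\utn}$ and $\widehat{X}_{n,\utn - 1/n}$ are bounded by a constant, so the tame-drift-per-step contribution collapses to $O(1/n)$ and the Euler recursions give
\[
|\widehat{X}_{n,\utn} - \xi| \le \tfrac{c}{\sqrt{n}}(1+|Z_1|), \qquad |\widehat{X}_{n,\utn} - \widehat{X}_{n,t'}| \le \tfrac{c}{\sqrt{n}}(1+|Z_2|).
\]
The triangle inequality then gives $|\widehat{X}_{n,t'} - \xi| \le (c/\sqrt{n})(1+|Z_1|+|Z_2|)$, which is contained in the union over $j \in \{1,2\}$ of $\{|\widehat{X}_{n,t'}-\xi| \le (2c/\sqrt{n})(1+|Z_j|)\}$. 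Since each $Z_j$ is independent of $(A, \widehat{X}_{n,t'})$, Fubini converts each of the two probabilities into a Gaussian integral of exactly the form appearing on the right-hand side of \eqref{central01}; summing these two contributions together with the $c\,\PP(A)/n$ bad-event estimates finishes the proof.

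The main obstacle is the bootstrap. Lemma \ref{central00} alone bounds the tame-drift-per-step only by $n^{-\gamma}$, which is strictly coarser than the desired $1/\sqrt{n}$ whenever $\ell > 1$; the sign-change hypothesis is what permits one to force $\widehat{X}_{n,\utn}$ into a bounded range, after which the drift contribution becomes genuinely $1/n$ and the diffusion contribution becomes the clean $(c/\sqrt{n})|Z_1|$ term. Tracking the independence structure of three distinct Brownian increments (on $[\utn - 1/n, t']$, $[t', \utn]$, and $[\utn, t]$) and their interactions with $A$, $\widehat{X}_{n,\utn - 1/n}$, $\widehat{X}_{n,\utn}$, and $\widehat{X}_{n, t'}$ requires careful bookkeeping to ensure that the final Fubini step is legitimate.
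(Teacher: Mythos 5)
Your proposal follows essentially the same route as the paper's proof: isolate a high-probability good set, use the sign-change hypothesis together with Lemma \ref{central00} and a bootstrap to confine $\widehat{X}_{n,\utn}$ and $\widehat{X}_{n,\utn-1/n}$ to a bounded range on that good set, then upgrade the $n^{-\gamma}$ drift contribution to a genuine $O(1/n)$ and the diffusion to a clean $(c/\sqrt{n})(1+|Z_j|)$, and finally Fubini out the Gaussian increments using the independence from $\F_{t'}$. The bootstrap correctly exploits the sign-change inequality $|\widehat{X}_{n,\utn}-\xi|\le|\widehat{X}_{n,t}-\widehat{X}_{n,\utn}|$ and the hypothesis $\utn-s\ge 1/n$ is used exactly where needed (so that $A\in\F_s\subseteq\F_{\utn-1/n}$ makes $A$ independent of the per-cell Gaussians). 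Two cosmetic differences from the paper: (1) the paper uses a uniform threshold $\sqrt{2\ln n}$ for all three normalized increments and splits the increment over $[\utn-1/n,\utn]$ into two pieces at $t'$, while you define $Z_3$ as the full-cell increment $\sqrt{n}(W_{\utn}-W_{\utn-1/n})$ and let the good-set cutoff scale like $\sqrt{n}$ — both work, though note your later mention of ``three distinct Brownian increments on $[\utn-1/n,t']$, $[t',\utn]$, $[\utn,t]$'' doesn't quite match your own earlier definition of $Z_3$; (2) at the end the paper converts the 2D Gaussian integral in $(Z_1,Z_2)$ to a 1D one via $(Z_1+Z_2)/\sqrt 2\sim\mathcal N(0,1)$, whereas you use the union bound $\{1+|Z_1|+|Z_2|\le a\}\subset\bigcup_j\{2(1+|Z_j|)\le 2a\}$, which is somewhat cleaner and reaches the same integral form. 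You should add the small remark (which the paper makes explicitly) that small $n$ is handled trivially by absorbing the whole left-hand side into $\frac{c}{n}\PP(A)$, since otherwise your good sets could be empty; with that noted, the argument is sound.
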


\begin{proof}
Let $\gamma = \frac{1}{1+\ell} \wedge \frac12$. According to Assumption \ref{assum2} and Lemma \ref{central00}, choose $K \in (0,\infty)$ such that for all $x\in\R$ and all $n\in\N$,
\begin{equation}\label{central02}
n^{\gamma-1} \cdot \frac{|\mu(x)|}{1 + 1/n \cdot |\mu(x)|} + |\sigma(x)| \leq K \cdot \big(1+|x|\big),
\end{equation}
and for all $x\in\R$ with $|x-\xi| \leq 3$,
\begin{equation}\label{central03}
|\mu(x)| + |\sigma(x)| \leq K.
\end{equation}
Choose $n_0 \in \N \setminus \{ 1 \}$ such that for all $n \geq n_0$,
\[
2K \cdot \big(1+|\xi|\big) \cdot \frac{1 + \sqrt{2\ln(n)}}{n^{\gamma}} \leq \frac12.
\]
Without loss of generality we may assume that $n \geq n_0$. Let $0 \leq s \leq t \leq 1$ with $\utn-s \geq 1/n$ and let $A\in\F_s$. If $t = \utn$, then for all $c \in (0,\infty)$ and all $z\in\R$, we have
\[
\big\{ (\widehat{X}_{n,t}-\xi) \cdot (\widehat{X}_{n,\utn}-\xi) \leq 0 \big\}
= \{ \widehat{X}_{n,\utn}-\xi = 0 \}
\subset \bigg\{ |\widehat{X}_{n,\utn-(t-\utn)}-\xi| \leq \frac{c}{\sqrt{n}} \big(1+|z|\big) \bigg\},
\]
which implies that \eqref{central01} holds for all $c \geq 1/\sqrt{2\pi}$.
  
If $t > \utn$, then put
\[
Z_1 = \frac{W_{t}-W_{\utn}}{\sqrt{t-\utn}}, \quad
Z_2 = \frac{W_{\utn}-W_{\utn-(t-\utn)}}{\sqrt{t-\utn}}, \quad
Z_3 = \frac{W_{\utn-(t-\utn)}-W_{\utn-1/n}}{\sqrt{1/n-(t-\utn)}}.
\]
We will show below
\begin{align}\label{central04}
&\big\{ (\widehat{X}_{n,t}-\xi) \cdot (\widehat{X}_{n,\utn}-\xi) \leq 0 \big\} \cap \Big\{ \max_{i \in \{ 1,2,3 \}} |Z_i| \leq \sqrt{2\ln(n)} \Big\} \notag \\
&\subset \big\{ |\widehat{X}_{n,\utn-(t-\utn)}-\xi| \leq K \cdot \big(2+|Z_1|+|Z_2|\big) / \sqrt{n} \big\}.
\end{align}
Note that $Z_1,Z_2,Z_3$ are independent and identically distributed standard normal random variables. Moreover, $(Z_1,Z_2,Z_3)$ is independent of $\F_s$ since $s \leq \utn-1/n$, $(Z_1,Z_2)$ is independent of $\F_{\utn-(t-\utn)}$ and $\widehat{X}_{n,\utn-(t-\utn)}$ is $\F_{\utn-(t-\utn)}$-measurable. Using the latter facts, together with \eqref{central04} and a standard estimate of standard normal tail probabilities, we obtain that
\begin{align*}
&\PP\big(A \cap \big\{ (\widehat{X}_{n,t}-\xi) \cdot (\widehat{X}_{n,\utn}-\xi) \leq 0 \big\}\big) \\
&\leq \PP\big(A \cap \big\{ |\widehat{X}_{n,\utn-(t-\utn)}-\xi| \leq K \cdot \big(2+|Z_1|+|Z_2|\big) / \sqrt{n} \big\}\big) \\
&\quad + \PP\Big( A \cap \Big\{ \max_{i \in \{ 1,2,3 \}} |Z_i| > \sqrt{2\ln(n)} \Big\}\Big) \\
&\leq \frac{2}{\pi} \int_{[0,\infty)^2} \PP\bigg( A \cap \bigg\{ |\widehat{X}_{n,\utn-(t-\utn)}-\xi| \leq K \cdot \frac{2+z_1+z_2}{\sqrt{n}} \bigg\}\bigg) \cdot e^{-\frac{z_1^2+z_2^2}{2}} \, \dif (z_1,z_2) \\
&\quad + 6\PP(A) \cdot \PP(\{ Z_1 > \sqrt{2\ln(n)} \}) \\
&\leq \frac{2}{\pi} \int_{\R^{2}} \PP\bigg( A \cap \bigg\{ |\widehat{X}_{n,\utn-(t-\utn)}-\xi| \leq 2K \cdot \frac{1+|\frac{z_1+z_2}{\sqrt{2}}|}{\sqrt{n}} \bigg\}\bigg) \cdot e^{-\frac{z_1^2+z_2^2}{2}} \, \dif (z_1,z_2) \\
&\quad + \frac{6\PP(A)}{\sqrt{2\pi \cdot 2\ln(n)} \cdot n} \\
&= \frac{4}{\sqrt{2\pi}} \int_{\R} \PP\bigg( A \cap \bigg\{ |\widehat{X}_{n,\utn-(t-\utn)}-\xi| \leq 2K \cdot \frac{1+|z|}{\sqrt{n}} \bigg\}\bigg) \cdot e^{-\frac{z^2}{2}} \, \dif z
+ \frac{3\PP(A)}{\sqrt{\pi\ln(n)} \cdot n},
\end{align*}
which yields \eqref{central01}.
  
It remains to prove the inclusion \eqref{central04}. To this end let $\omega \in \Omega$ and assume that
\[
(\widehat{X}_{n,t}(\omega)-\xi) \cdot (\widehat{X}_{n,\utn}(\omega)-\xi) \leq 0 \quad \textrm{and} \quad \max_{i \in \{ 1,2,3 \}} |Z_i(\omega)| \leq \sqrt{2\ln(n)}.
\]
Using \eqref{central02} and the fact that for all $a,b\in\R$,
\begin{equation}\label{central05}
1+|a| \leq \big(1+|a-b|\big) \cdot \big(1+|b|\big),
\end{equation}
we obtain
\begin{align}\label{central06}
|\widehat{X}_{n,\utn}(\omega)-\xi|
&\leq |(\widehat{X}_{n,\utn}(\omega)-\xi) - (\widehat{X}_{n,t}(\omega)-\xi)| \notag \\
&= \bigg| \frac{\mu(\widehat{X}_{n,\utn}(\omega))}{1 + 1/n \cdot |\mu(\widehat{X}_{n,\utn}(\omega))|} \cdot (t-\utn) + \sigma(\widehat{X}_{n,\utn}(\omega)) \cdot \sqrt{t-\utn} \cdot Z_1(\omega) \bigg| \notag \\
&\leq K \cdot \big(1+|\widehat{X}_{n,\utn}(\omega)|\big) \cdot \bigg( \frac{1}{n^{\gamma}} + \frac{1}{\sqrt{n}} \cdot |Z_1(\omega)| \bigg) \notag \\
&\leq \big(1+|\widehat{X}_{n,\utn}(\omega)-\xi|\big) \cdot K \cdot \big(1+|\xi|\big) \cdot \frac{1}{n^{\gamma}} \cdot \big(1+|Z_1(\omega)|\big).
\end{align}
Since $n \geq n_0$ we have
\[
K \cdot \big(1+|\xi|\big) \cdot \frac{1}{n^{\gamma}} \cdot \big(1+|Z_1(\omega)|\big) \leq K \cdot \big(1+|\xi|\big) \cdot \frac{1+\sqrt{2\ln(n)}}{n^{\gamma}} \leq \frac12,
\]
and therefore,
\begin{equation}\label{central07}
|\widehat{X}_{n,\utn}(\omega)-\xi| \leq 1.
\end{equation}
Similarly to \eqref{central06}, we obtain by \eqref{central02} and \eqref{central05} that
\begin{equation}\label{central08}
|\widehat{X}_{n,\utn}(\omega)-\widehat{X}_{n,\utn-(t-\utn)}(\omega)|
\leq \big(1 + |\widehat{X}_{n,\utn-1/n}(\omega)-\xi|\big) \cdot K \cdot \big(1+|\xi|\big) \cdot \frac{1}{n^{\gamma}} \cdot \big(1+|Z_2(\omega)|\big)
\end{equation}
and
\begin{equation}\label{central09}
|\widehat{X}_{n,\utn-(t-\utn)}(\omega) - \widehat{X}_{n,\utn-1/n}(\omega)|
\leq \big(1 + |\widehat{X}_{n,\utn-1/n}(\omega)-\xi|\big) \cdot K \cdot \big(1+|\xi|\big) \cdot \frac{1}{n^{\gamma}} \cdot \big(1+|Z_3(\omega)|\big).
\end{equation}
Using \eqref{central08} and \eqref{central09} we obtain
\[
|\widehat{X}_{n,\utn}(\omega) - \widehat{X}_{n,\utn-1/n}(\omega)|
\leq \big(1 + |\widehat{X}_{n,\utn-1/n}(\omega)-\xi|\big) \cdot K \cdot \big(1+|\xi|\big) \cdot \frac{1}{n^{\gamma}} \cdot \big(2+|Z_2(\omega)|+|Z_3(\omega)|\big).
\]
Since $n \geq n_0$ we have
\[
K \cdot \big(1+|\xi|\big) \cdot \frac{1}{n^{\gamma}} \cdot \big(2+|Z_2(\omega)|+|Z_3(\omega)|\big) \leq 2K \cdot \big(1+|\xi|\big) \cdot \frac{1 + \sqrt{2\ln(n)}}{n^{\gamma}} \leq \frac12,
\]
and therefore,
\begin{equation}\label{central10}
|\widehat{X}_{n,\utn}(\omega) - \widehat{X}_{n,\utn-1/n}(\omega)| 
\leq \big(1 + |\widehat{X}_{n,\utn-1/n}(\omega)-\xi|\big) / 2.
\end{equation}
Using \eqref{central07} and \eqref{central10} we obtain
\begin{align*}
|\widehat{X}_{n,\utn-1/n}(\omega) - \xi|
&\leq |\widehat{X}_{n,\utn}(\omega) - \widehat{X}_{n,\utn-1/n}(\omega)| + |\widehat{X}_{n,\utn}(\omega) - \xi| \\
&\leq \big(1 + |\widehat{X}_{n,\utn-1/n}(\omega) - \xi|\big) / 2 + 1,
\end{align*}
and therefore,
\begin{equation}\label{central11}
|\widehat{X}_{n,\utn-1/n}(\omega) - \xi| \leq 3.
\end{equation}
Using \eqref{central03} and \eqref{central07} we obtain
\begin{align}\label{central12}
|\widehat{X}_{n,\utn}(\omega) - \xi|
&\leq |(\widehat{X}_{n,\utn}(\omega)-\xi) - (\widehat{X}_{n,t}(\omega)-\xi)| \notag \\
&= \bigg| \frac{\mu(\widehat{X}_{n,\utn}(\omega))}{1 + 1/n \cdot |\mu(\widehat{X}_{n,\utn}(\omega))|} \cdot (t-\utn) + \sigma(\widehat{X}_{n,\utn}(\omega)) \cdot \sqrt{t-\utn} \cdot Z_1(\omega) \bigg| \notag \\
&\leq K \cdot \bigg( \frac{1}{n} + \frac{1}{\sqrt{n}} \cdot |Z_1(\omega)| \bigg) \notag \\
&\leq K \cdot \frac{1}{\sqrt{n}} \cdot \big(1 + |Z_1(\omega)|\big).
\end{align}
Similarly to \eqref{central12}, we obtain by \eqref{central03} and \eqref{central11} that
\begin{equation}\label{central13}
|\widehat{X}_{n,\utn}(\omega) - \widehat{X}_{n,\utn-(t-\utn)}(\omega)|
\leq K \cdot \frac{1}{\sqrt{n}} \cdot \big(1 + |Z_2(\omega)|\big).
\end{equation}
Using \eqref{central12} and \eqref{central13} we obtain
\begin{align*}
|\widehat{X}_{n,\utn-(t-\utn)}(\omega) - \xi|
&\leq |\widehat{X}_{n,\utn}(\omega) - \widehat{X}_{n,\utn-(t-\utn)}(\omega)| + |\widehat{X}_{n,\utn}(\omega) - \xi| \\
&\leq K \cdot \frac{1}{\sqrt{n}} \cdot \big(2 + |Z_1(\omega)| + |Z_2(\omega)|\big).
\end{align*}
This finishes the proof of \eqref{central04}.
\end{proof}

Using Lemmas \ref{markov}, \ref{occup} and \ref{central} we can now establish the following two estimates on the probability of sign changes of $\widehat{X}_n-\xi$ relative to its sign at the gridpoints $0,1/n,\dots,1$.

\begin{lemma} \label{key}
Let Assumption \ref{assum2} hold. Let $\xi\in\R$ satisfy $\sigma(\xi) \neq 0$, $\alpha \in [2,\infty)$ and
\[
A_{n,t} = \big\{ (\widehat{X}_{n,t}-\xi) \cdot (\widehat{X}_{n,\utn}-\xi) \leq 0 \big\}
\]
for all $n\in\N$ and all $t \in [0,1]$. Then the following two statements hold.
\begin{enumerate}[\em (i)]
\item There exists $c \in (0,\infty)$ such that for all $n\in\N$, all $s \in [0,1)$ and all $A\in\F_s$,
\[
\int_s^1 \PP(A \cap A_{n,t}) \, \dif t
\leq \frac{c}{\sqrt{n}} \cdot \big( \PP(A) + \EE \big[ \ind_A \cdot |\widehat{X}_{n,\usn+1/n} - \xi|^{10(1+\ell)^2} \big] \big).
\]
\item There exists $c \in (0,\infty)$ such that for all $n\in\N$, all $s \in [0,1)$ and all $A\in\F_s$,
\[
\int_s^1 \EE \big[ \ind_{A \cap A_{n,t}} \cdot |\widehat{X}_{n,\utn+1/n} - \xi|^{\alpha} \big] \, \dif t
\leq \frac{c}{n} \cdot \big( \PP(A) + \EE \big[ \ind_A \cdot |\widehat{X}_{n,\usn+1/n} - \xi|^{5 (1+\ell)^2 \alpha} \big] \big).
\]
\end{enumerate}
\end{lemma}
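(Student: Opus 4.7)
Both parts follow the same three-step pipeline. Fix $s':=(\usn+2/n)\wedge 1$ so that $\utn-s\ge 1/n$ holds for every $t\in[s',1]$ (note $s'-s\le 2/n$). (a) On the transient $[s,s']$, bound the integrand pointwise using Lemma \ref{markov} and Lemma \ref{tamprop2} at exponent $\alpha$; since $s'-s\le 2/n$ this contributes $\le(c/n)(\PP(A)+\EE[\ind_A|\widehat X_{n,\usn+1/n}-\xi|^{5(1+\ell)^2\alpha}])$. (b) On $[s',1]$, invoke Lemma \ref{central} to dominate $A_{n,t}$ by the concentration event $\{|\widehat X_{n,\utn-(t-\utn)}-\xi|\le c(1+|z|)/\sqrt n\}$ tempered by a Gaussian weight. (c) Integrate in $t$ via Fubini and the piecewise change of variable $r=\utn-(t-\utn)$ (with $dr=-dt$ on each grid interval), use the Markov property (Lemma \ref{markov}) to pass to $\widehat X_n^x$ started at $x=\widehat X_{n,s'}$, then apply the occupation-time estimate (Lemma \ref{occup}, using $\sigma(\xi)\ne 0$) together with Lemma \ref{tamprop2}.

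\textbf{Part (i).} The $(c/n)\PP(A)$-term from Lemma \ref{central} integrates to $(c/\sqrt n)\PP(A)$. Pipeline (c) converts the Gaussian-weighted main term into
\[
\frac{c}{\sqrt n}\,\EE\bigl[\ind_A(1+|\widehat X_{n,s'}|^{10(1+\ell)^2})\bigr],
\]
and we finish using $(1+|x|)^p\le c(1+|x-\xi|^p)$.

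\textbf{Part (ii).} Set $F_t:=|\widehat X_{n,\utn+1/n}-\xi|^\alpha$, and split $A_{n,t}=(A_{n,t}\cap G_t)\cup(A_{n,t}\cap\bar G_t)$, where $G_t$ is the good event from Lemma \ref{central}'s proof with the threshold raised from $\sqrt{2\ln n}$ to $\sqrt{4\ln n}$; all inclusions of that proof remain valid (only the constant $n_0$ grows) while $\PP(\bar G_t)\le c/n^2$. On $A_{n,t}\cap G_t$, the proof of Lemma \ref{central} gives $|\widehat X_{n,\utn}-\xi|\le K(1+|Z_1|)/\sqrt n$, whence $|\widehat X_{n,\utn}|\le 1+|\xi|$ and a one-step tamed Euler estimate yields $|\widehat X_{n,\utn+1/n}-\xi|\le K'(1+|Z_0|+|Z_1|)/\sqrt n$ with $Z_0:=\sqrt n(W_{\utn+1/n}-W_\utn)$. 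Substituting, integrating out $Z_0$ (conditionally on $Z_1$ this is polynomial in $|Z_1|$ of degree $\alpha$), and running pipeline (c) produces
\[
\int_{s'}^1\EE\bigl[\ind_{A\cap A_{n,t}\cap G_t}F_t\bigr]\,dt\le\frac{c}{n^{(\alpha+1)/2}}\bigl(\PP(A)+\EE\bigl[\ind_A|\widehat X_{n,s'}-\xi|^{10(1+\ell)^2}\bigr]\bigr),
\]
which is $\le(c/n)\bigl(\PP(A)+\EE[\ind_A|\widehat X_{n,s'}-\xi|^{5(1+\ell)^2\alpha}]\bigr)$ because $\alpha\ge 2$ and $10(1+\ell)^2\le 5(1+\ell)^2\alpha$. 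On $\bar G_t$, condition on $\F_{s'}$ and apply Lemma \ref{markov} to obtain $\EE[\ind_{\bar G_t}F_t\mid\F_{s'}]=h_t(\widehat X_{n,s'})$, where $h_t(x)=\EE[\ind_{\bar G_t^x}|\widehat X_{n,(\utn+1/n)-s'}^x-\xi|^\alpha]$; Cauchy--Schwarz and Lemma \ref{tamprop2} at exponent $2\alpha$ yield
\[
h_t(x)\le\sqrt{\PP(\bar G_t^x)}\cdot c\bigl(1+|x|^{5(1+\ell)^2\alpha}\bigr)\le\frac{c}{n}\bigl(1+|x|^{5(1+\ell)^2\alpha}\bigr),
\]
and taking $\EE[\ind_A\,\cdot\,]$ closes part (ii).

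\textbf{Main obstacle.} The crux is to extract from Lemma \ref{central}'s proof a further bound $|\widehat X_{n,\utn+1/n}-\xi|\le K'(1+|Z_0|+|Z_1|)/\sqrt n$ on the good event (not directly in the statement of Lemma \ref{central}), so that on $A_{n,t}\cap G_t$ one gains a factor $n^{-\alpha/2}$ from $F_t$ on top of the usual $n^{-1/2}$ from the concentration inclusion; combined, these yield $n^{-(\alpha+1)/2}\le n^{-1}$ exactly when $\alpha\ge 2$. A parallel bookkeeping issue is matching the prescribed exponent $5(1+\ell)^2\alpha$: on the good event this follows from $10(1+\ell)^2\le 5(1+\ell)^2\alpha$, whereas on the bad event one invokes Lemma \ref{tamprop2} at exponent $2\alpha$, which produces $|x|^{5(1+\ell)^2\alpha}$ after the Cauchy--Schwarz square root.
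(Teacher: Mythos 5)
Your part (i) is essentially the paper's argument (transient strip of length $\le 2/n$, Lemma \ref{central}, piecewise change of variables, Markov property, occupation-time estimate from Lemma \ref{occup}), but there is a concrete mismatch in the conditioning point: you pass to $\widehat X_n^x$ started at $x=\widehat X_{n,s'}$ with $s'=\usn+2/n$, so your pipeline delivers a bound featuring $\EE[\ind_A\,|\widehat X_{n,s'}-\xi|^{10(1+\ell)^2}]$, whereas the lemma asserts the bound with $\widehat X_{n,\usn+1/n}$. These are different random variables, and converting one to the other via Lemma \ref{markov} and Lemma \ref{tamprop0} inflates the exponent (to order $50(1+\ell)^3$), so as written you prove a variant of the statement, not the statement. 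The repair is to condition at the gridpoint $\usn+1/n$, which is legitimate because $A\in\F_s\subseteq\F_{\usn+1/n}$ and, after your change of variables $r=\utn-(t-\utn)$, the time integral runs over $[\usn+1/n,1-1/n]$, i.e.\ starts exactly there (note also that with your choice $s'$ the transformed integral begins \emph{before} $s'$, so conditioning at $s'$ does not even cover the whole range). This is how the paper does it, and the same remark applies to the endpoint appearing in your part (ii).

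Part (ii) is where you genuinely diverge from the paper, and your route is much heavier than necessary. The paper does not touch Lemma \ref{central}, the occupation-time estimate, or the hypothesis $\sigma(\xi)\neq0$ for this part: on $A_{n,t}$ the sign change gives $|\widehat X_{n,t}-\xi|\le|\widehat X_{n,t}-\widehat X_{n,\utn}|$, hence $|\widehat X_{n,\utn+1/n}-\xi|\le|\widehat X_{n,\utn+1/n}-\widehat X_{n,t}|+|\widehat X_{n,t}-\widehat X_{n,\utn}|$, a sum of two increments over time intervals of length at most $1/n$; one then simply drops $\ind_{A_{n,t}}$, conditions on $\widehat X_{n,\usn+1/n}$ via Lemma \ref{markov}, and applies Lemma \ref{tamprop2} to get $c\,(1+|x-\xi|^{5(1+\ell)^2\alpha})\,n^{-\alpha/2}\le c\,(1+|x-\xi|^{5(1+\ell)^2\alpha})/n$ since $\alpha\ge2$, with the interval $[s,\usn+1/n)$ handled trivially because there $\utn=\usn$. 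Your alternative--reopening the proof of Lemma \ref{central} with the threshold $\sqrt{4\ln n}$, extracting the unstated intermediate bound $|\widehat X_{n,\utn}-\xi|\le K(1+|Z_1|)/\sqrt n$, adding a one-step estimate, and running the occupation-time pipeline with the extra weight $(1+|Z_0|+|Z_1|)^\alpha$--can plausibly be completed, but it leans on internals of another proof rather than its statement and needs additional bookkeeping you have not supplied: $Z_0$ and $Z_1$ are not independent (their Brownian increments overlap), so the ``integrate out $Z_0$'' step must split off $W_{\utn+1/n}-W_t$; and for $t$ with $\utn\le\usn+2/n$ the variable $Z_3$ involves increments before $s'$, so the identity $\EE[\ind_{\bar G_t}F_t\mid\F_{s'}]=h_t(\widehat X_{n,s'})$ fails there and those $t$ need separate treatment. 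In short: part (i) is the paper's proof modulo the fixable conditioning slip; part (ii) is a workable but unnecessarily elaborate detour, and you should be aware of the elementary argument that makes it a few lines.
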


\begin{proof}
Let $n\in\N$, $s \in [0,1)$ and $A\in\F_s$. In the following we use $c_1,c_2, \dots \in (0,\infty)$ to denote unspecified positive constants, which neither depend on $n$ nor on $s$ nor on $A$.
  
We first prove part (i). Clearly we may assume that $s < 1-1/n$. Then $\usn \leq 1-2/n$ and we have
\[
\int_s^1 \PP(A \cap A_{n,t}) \, \dif t \leq \frac{2}{n} \cdot \PP(A) + \int_{\usn + 2/n}^{1} \PP(A \cap A_{n,t}) \, \dif t.
\]
If $t \in [\usn+2/n,1]$ then $\utn \geq \usn+2/n$, which implies $\utn-1/n \geq \usn+1/n \geq s$. We may thus apply Lemma \ref{central} to conclude that there exists $c_1 \in (0,\infty)$ such that
\begin{align*}
&\int_s^1 \PP(A \cap A_{n,t}) \, \dif t \\
&\leq \frac{c_1}{n} \cdot \PP(A) + c_1 \cdot \int_{\R} \int_{\usn+2/n}^{1} \PP \bigg( A \cap \bigg\{ |\widehat{X}_{n,\utn-(t-\utn)}-\xi| \leq \frac{c_1}{\sqrt{n}} \big(1+|z|\big) \bigg\} \bigg) \cdot e^{-\frac{z^2}{2}} \, \dif t \, \dif z.
\end{align*}
By the change-of-variable formula, we have for all $i \in \{ 1, \dots, n-1 \}$ and all $\kappa\in\R$,
\[
\int_{i/n}^{(i+1)/n} \PP\big(A \cap \big\{ |\widehat{X}_{n,\utn-(t-\utn)}-\xi| \leq \kappa \big\}\big) \, \dif t
= \int_{(i-1)/n}^{i/n} \PP\big(A \cap \big\{ |\widehat{X}_{n,t} - \xi| \leq \kappa \big\}\big) \, \dif t.
\]
Thus,
\begin{align}\label{key1}
&\int_s^1 \PP(A \cap A_{n,t}) \, \dif t \notag \\
&\leq \frac{c_1}{n} \cdot \PP(A) + c_1 \cdot \int_{\R} \int_{\usn+1/n}^{1-1/n} \PP \bigg( A \cap \bigg\{ |\widehat{X}_{n,t} - \xi| \leq \frac{c_1}{\sqrt{n}} \big(1+|z|\big) \bigg\} \bigg) \cdot e^{-\frac{z^2}{2}} \, \dif t \, \dif z.
\end{align}
By the fact that $A \in \F_{\usn+1/n}$ and by Lemma \ref{markov} we see that for all $z\in\R$,
\begin{align}\label{key2}
&\int_{\usn+1/n}^{1-1/n} \PP \bigg( A \cap \bigg\{ |\widehat{X}_{n,t} - \xi| \leq \frac{c_1}{\sqrt{n}} \big(1+|z|\big) \bigg\} \bigg) \, \dif t \notag \\
&= \EE \bigg[ \ind_A \cdot \EE \bigg[ \int_{\usn+1/n}^{1-1/n} \ind_{\{ |\widehat{X}_{n,t} - \xi| \leq \frac{c_1}{\sqrt{n}} (1+|z|) \}} \, \dif t \Big| \widehat{X}_{n,\usn+1/n} \bigg] \bigg].
\end{align}
Moreover, by Lemmas \ref{markov} and \ref{occup} we obtain that there exists $c_2 \in (0,\infty)$ such that for all $z\in\R$ and $\PP^{\widehat{X}_{n,\usn+1/n}}$-almost all $x\in\R$,
\begin{align}\label{key3}
&\EE \bigg[ \int_{\usn+1/n}^{1-1/n} \ind_{\{ |\widehat{X}_{n,t} - \xi| \leq \frac{c_1}{\sqrt{n}} (1+|z|) \}} \, \dif t \Big| \widehat{X}_{n,\usn+1/n} = x \bigg] \notag \\
&= \EE \bigg[ \int_{0}^{1-2/n-\usn} \ind_{\{ |\widehat{X}_{n,t}^x - \xi| \leq \frac{c_1}{\sqrt{n}} (1+|z|) \}} \, \dif t \bigg]
\leq c_2 \cdot \big(1+|x|^{10(1+\ell)^2}\big) \cdot \bigg( \frac{c_1}{\sqrt{n}} \big(1+|z|\big) + \frac{1}{\sqrt{n}} \bigg).
\end{align}
Combining \eqref{key2} and \eqref{key3} we conclude that for all $z\in\R$,
\begin{align}\label{key4}
&\int_{\usn+1/n}^{1-1/n} \PP \bigg( A \cap \bigg\{ |\widehat{X}_{n,t} - \xi| \leq \frac{c_1}{\sqrt{n}} \big(1+|z|\big) \bigg\} \bigg) \, \dif t \notag \\
&\leq \frac{c_2(1 + c_1)}{\sqrt{n}} \cdot \big(1+|z|\big) \cdot \EE \big[ \ind_A \cdot \big(1 + |\widehat{X}_{n,\usn+1/n}|^{10(1+\ell)^2}\big) \big] \notag \\
&\leq \frac{c_3}{\sqrt{n}} \cdot \big(1 + |\xi|^{10(1+\ell)^2}\big) \cdot \big(1+|z|\big) \cdot \big( \PP(A) + \EE \big[ \ind_A \cdot |\widehat{X}_{n,\usn+1/n} - \xi|^{10(1+\ell)^2} \big] \big).
\end{align}
Inserting \eqref{key4} into \eqref{key1} and observing that $\int_{\R} (1+|z|) \cdot e^{-z^2/2} \, \dif z < \infty$ completes the proof of part (i).
  
We next prove part (ii). Clearly,
\begin{align*}
&\int_s^1 \EE \big[ \ind_{A \cap A_{n,t}} \cdot |\widehat{X}_{n,\utn+1/n} - \xi|^{\alpha} \big] \, \dif t \\
&= \int_{s}^{\usn+1/n} \EE \big[ \ind_{A \cap A_{n,t}} \cdot |\widehat{X}_{n,\utn+1/n} - \xi|^{\alpha} \big] \, \dif t
+ \int_{\usn+1/n}^{1} \EE \big[ \ind_{A \cap A_{n,t}} \cdot |\widehat{X}_{n,\utn+1/n} - \xi|^{\alpha} \big] \, \dif t.
\end{align*}
  
If $t \in [s,\usn+1/n)$ then $\utn = \usn$ and therefore
\begin{align}\label{key5}
\int_{s}^{\usn+1/n} \EE \big[ \ind_{A \cap A_{n,t}} \cdot |\widehat{X}_{n,\utn+1/n} - \xi|^{\alpha} \big] \, \dif t
&= \int_{s}^{\usn+1/n} \EE \big[ \ind_{A \cap A_{n,t}} \cdot |\widehat{X}_{n,\usn+1/n} - \xi|^{\alpha} \big] \, \dif t \notag \\
&\leq \int_{s}^{\usn+1/n} \EE \big[ \ind_A \cdot |\widehat{X}_{n,\usn+1/n} - \xi|^{\alpha} \big] \, \dif t \notag \\
&\leq \frac{1}{n} \cdot \EE \big[ \ind_A \cdot |\widehat{X}_{n,\usn+1/n} - \xi|^{\alpha} \big].
\end{align}
  
Next, let $t \in [\usn+1/n,1]$. Clearly, we have on $A_{n,t}$,
\[
|\widehat{X}_{n,\utn+1/n} - \xi|
\leq |\widehat{X}_{n,\utn+1/n} - \widehat{X}_{n,t}| + |\widehat{X}_{n,t} - \xi|
\leq |\widehat{X}_{n,\utn+1/n} - \widehat{X}_{n,t}| + |\widehat{X}_{n,t} - \widehat{X}_{n,\utn}|.
\]
Hence, by Lemma \ref{markov},
\begin{align}\label{key6}
&\EE \big[ \ind_{A \cap A_{n,t}} \cdot |\widehat{X}_{n,\utn+1/n} - \xi|^{\alpha} \big] \notag \\
&\leq \EE \big[ \ind_A \cdot \big(|\widehat{X}_{n,\utn+1/n} - \widehat{X}_{n,t}| + |\widehat{X}_{n,t} - \widehat{X}_{n,\utn}|\big)^{\alpha} \big] \notag \\
&= \EE \big[ \ind_A \cdot \EE \big[ \big(|\widehat{X}_{n,\utn+1/n} - \widehat{X}_{n,t}| + |\widehat{X}_{n,t} - \widehat{X}_{n,\utn}|\big)^{\alpha} | \widehat{X}_{n,\usn+1/n} \big] \big].
\end{align}
  
If $t \geq \usn+1/n$ then $\utn \geq \usn+1/n$. Hence, by Lemma \ref{tamprop2} and \ref{markov} we obtain that there exist $c_1,c_2 \in (0,\infty)$ such that for all $t \in [\usn+1/n,1]$ and $\PP^{\widehat{X}_{n,\usn+1/n}}$-almost all $x\in\R$,
\begin{align}\label{key7}
&\EE \big[ \big(|\widehat{X}_{n,\utn+1/n} - \widehat{X}_{n,t}| + |\widehat{X}_{n,t} - \widehat{X}_{n,\utn}|\big)^{\alpha} | \widehat{X}_{n,\usn+1/n} = x \big] \notag \\
&= \EE \big[ \big(|\widehat{X}_{n,\utn-\usn}^x - \widehat{X}_{n,t-\usn-1/n}^x| + |\widehat{X}_{n,t-\usn-1/n}^x - \widehat{X}_{n,\utn-\usn-1/n}^x|\big)^{\alpha} \big] \notag \\
&\leq c_1 \cdot \big(1+|x|^{5 (1+\ell)^2 \alpha}\big) \cdot 1/n
\leq c_2 \cdot \big(1 + |x-\xi|^{5 (1+\ell)^2 \alpha}\big) \cdot 1/n.
\end{align}
It follows from \eqref{key6} and \eqref{key7} that
\begin{align}\label{key8}
&\int_{\usn+1/n}^{1} \EE \big[ \ind_{A \cap A_{n,t}} \cdot |\widehat{X}_{n,\utn+1/n} - \xi|^{\alpha} \big] \, \dif t \notag \\
&\leq \frac{c_2}{n} \cdot \int_{\usn+1/n}^{1} \EE \big[ \ind_A \cdot \big(1 + |\widehat{X}_{n,\usn+1/n} - \xi|^{5 (1+\ell)^2 \alpha}\big) \big] \, \dif t \notag \\
&\leq \frac{c_2}{n} \cdot \big( \PP(A) + \EE \big[ \ind_A \cdot |\widehat{X}_{n,\usn+1/n} - \xi|^{5 (1+\ell)^2 \alpha} \big] \big).
\end{align}
Combining \eqref{key5} with \eqref{key8} completes the proof of part (ii).
\end{proof}

We are ready to establish the main result of this subsection, which provides a $p$-th mean estimate of the Lebesgue measure of the set of times $t$ of a sign change of $\widehat{X}_{n,t}-\xi$ relative to the sign of $\widehat{X}_{n,\utn}-\xi$.

\begin{prop} \label{prop}
Let Assumption \ref{assum2} hold. Let $\xi\in\R$ satisfy $\sigma(\xi) \neq 0$ and $p \in [1,\infty)$. Then there exists $c \in (0,\infty)$ such that for all $n\in\N$,
\[
\bigg(\EE\bigg[ \bigg|\int_0^1 \ind_{\{ (\widehat{X}_{n,t}-\xi) \cdot (\widehat{X}_{n,\utn}-\xi) \leq 0 \}} \, \dif t\bigg|^p \bigg]\bigg)^{1/p} \leq \frac{c}{\sqrt{n}}.
\]
\end{prop}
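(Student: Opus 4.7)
By monotonicity of $L^p$-norms on a probability space, we may assume $p \in \N$. Set $T_n := \int_0^1 \ind_{A_{n,t}} \, \dif t$; by the symmetric integral representation,
\begin{equation*}
\EE[T_n^p] = p! \int_{0 \le t_1 \le \cdots \le t_p \le 1} \PP(A_{n,t_1} \cap \cdots \cap A_{n,t_p}) \, \dif t_1 \cdots \dif t_p.
\end{equation*}
The plan is to integrate out $t_p, t_{p-1},\ldots, t_1$ one at a time, exploiting the key fact that $A_{n,t_1} \cap \cdots \cap A_{n,t_k} \in \F_{t_k}$, and applying Lemma \ref{key} at each stage.

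Concretely, I would prove by backward induction on $k \in \{p, p-1, \ldots, 0\}$ the following claim: there exist a finite set of exponents $E_k \subset [0,\infty)$ and a constant $c_k \in (0,\infty)$, depending only on $p$ and $\ell$, such that for every $n \in \N$, every $s \in [0,1)$, and every $A \in \F_s$,
\begin{equation*}
\int_{s \le u_1 \le \cdots \le u_{p-k} \le 1} \PP\!\bigg(A \cap \bigcap_{i=1}^{p-k} A_{n,u_i}\bigg) \dif u_1 \cdots \dif u_{p-k}
\le \frac{c_k}{n^{(p-k)/2}} \sum_{\alpha \in E_k} \EE\!\big[\ind_A \cdot \big(1 + |\widehat{X}_{n,\usn + 1/n} - \xi|^\alpha\big)\big].
\end{equation*}
The base case $k = p$ is trivial (the left-hand side equals $\PP(A)$ and we take $E_p = \{0\}$). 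For the inductive step from $k+1$ to $k$: fix $s \in [0,1)$ and $A \in \F_s$, apply the induction hypothesis with $s$ replaced by $u_1$ and $A$ replaced by $A \cap A_{n,u_1} \in \F_{u_1}$, then integrate the resulting bound over $u_1 \in [s,1]$. The constant ($\alpha = 0$) piece produces $\int_s^1 \PP(A \cap A_{n,u_1}) \, \dif u_1$, which Lemma \ref{key}(i) bounds by a factor $c/\sqrt{n}$ times $\PP(A) + \EE[\ind_A|\widehat{X}_{n,\usn + 1/n}-\xi|^{10(1+\ell)^2}]$; each piece with exponent $\alpha > 0$ fits exactly the left-hand side of Lemma \ref{key}(ii), yielding a factor $c/n$ at the cost of replacing $\alpha$ by $5(1+\ell)^2 \alpha$. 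Setting $E_k := \{0,\, 10(1+\ell)^2\} \cup \{5(1+\ell)^2 \alpha : \alpha \in E_{k+1},\, \alpha > 0\}$ and adjusting $c_k$, the induction closes (note $\alpha \ge 10(1+\ell)^2 \ge 2$ whenever $\alpha > 0$ in any $E_k$ with $k < p$, so the hypothesis $\alpha \in [2,\infty)$ of Lemma \ref{key}(ii) is satisfied).

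Taking $k = 0$, $s = 0$, $A = \Omega$, one obtains
\begin{equation*}
\EE[T_n^p] \le \frac{c_0}{n^{p/2}} \sum_{\alpha \in E_0} \bigl(1 + \EE[|\widehat{X}_{n,1/n} - \xi|^\alpha]\bigr).
\end{equation*}
Since $E_0$ is finite and all finite moments of $\widehat{X}_{n,1/n}$ are uniformly bounded in $n$ by Lemma \ref{tamprop0}, we obtain $\EE[T_n^p] \le c \, n^{-p/2}$, i.e.\ the desired bound.

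\textbf{Main obstacle.} The principal technical difficulty is the bookkeeping of the moment exponents: each application of Lemma \ref{key}(ii) multiplies an exponent $\alpha$ by the factor $5(1+\ell)^2$, so the elements of $E_0$ can be as large as $(5(1+\ell)^2)^{p} \cdot 10(1+\ell)^2$. For fixed $p$ these exponents are still finite, and Lemma \ref{tamprop0} provides uniform-in-$n$ bounds on every finite moment of $\widehat{X}_{n,i/n}$, so the argument goes through; the bookkeeping, though straightforward, is the main technical content.
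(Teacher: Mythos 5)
Your proposal is correct and is essentially the paper's own argument: the paper likewise reduces to $p\in\N$, writes $\EE[T_n^p]$ as an iterated integral over ordered times, and bounds it by applying Lemma \ref{key}(i) once and Lemma \ref{key}(ii) repeatedly, using the uniform moment bounds on the scheme to absorb the resulting $|\widehat{X}_{n,\cdot}-\xi|^{\alpha}$ terms. The only difference is organizational — the paper runs a forward induction on $p$ reusing the already-proved bounds $a_{n,j}\leq c\,n^{-j/2}$ for the $\PP(A)$ terms spawned by Lemma \ref{key}(ii), whereas you absorb those terms directly in a single backward induction with explicit exponent bookkeeping — which is a legitimate, equally valid arrangement of the same proof.
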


\begin{proof}
Clearly, it suffices to consider only the case $p\in\N$. For $n\in\N$ and $t \in [0,1]$ put $A_{n,t} = \{ (\widehat{X}_{n,t}-\xi) \cdot (\widehat{X}_{n,\utn}-\xi) \leq 0 \}$ as in Lemma \ref{key}, and for $n,p\in\N$ let
\[
a_{n,p} = \EE\bigg[ \bigg(\int_0^1 \ind_{A_{n,t}} \, \dif t\bigg)^p \bigg].
\]
We prove by induction on $p$ that for every $p\in\N$ there exists $c \in (0,\infty)$ such that for all $n\in\N$,
\begin{equation}\label{prop1}
a_{n,p} \leq c \cdot n^{-p/2}.
\end{equation}
  
First assume that $p=1$. Using Lemma \ref{key}(i) with $s=0$ and $A=\Omega$ we obtain that there exist $c_1,c_2 \in (0,\infty)$ such that for all $n\in\N$,
\begin{align*}
a_{n,1} &= \int_0^1 \PP(A_{n,t}) \, \dif t \\
&\leq \frac{c_1}{\sqrt{n}} \cdot \big(1 + \EE\big[ |\widehat{X}_{n,1/n}-\xi|^{10(1+\ell)^2} \big]\big) \\
&\leq \frac{c_2}{\sqrt{n}} \cdot \Big(1 + |\xi|^{10(1+\ell)^2} + \sup_{j\in\N} \EE\big[ \|\widehat{X}_j\|_{\infty}^{10(1+\ell)^2} \big]\Big).
\end{align*}
Observing Lemma \ref{tamprop1} we thus see that \eqref{prop1} holds for $p=1$.

Next, let $q\in\N$ and assume that \eqref{prop1} holds for all $p \in \{ 1,\dots,q \}$. Clearly, for all $n\in\N$,
\[
a_{n,q+1} = (q+1)! \cdot \int_0^1 \int_{t_1}^1 \cdots \int_{t_q}^1 \PP(A_{n,t_1} \cap A_{n,t_2} \cap \cdots \cap A_{n,t_{q+1}}) \, \dif t_{q+1} \dots \, \dif t_2 \, \dif t_1.
\]

First applying Lemma \ref{key}(i) with $A = A_{n,t_1} \cap \cdots \cap A_{n,t_q}$ and $s = t_q$, then applying $(q-1)$-times Lemma \ref{key}(ii) with $A = A_{n,t_1} \cap \cdots \cap A_{n,t_j}$ and $s = t_j$ for $j = q-1,\dots,1$, and finally applying Lemma \ref{key}(ii) with $A=\Omega$ and $s=0$ we conclude that there exist constants $c_3,c_4,c_5 \in (0,\infty)$ such that for all $n\in\N$,
\begin{align*}
a_{n,q+1} &\leq \frac{c_3}{\sqrt{n}} \cdot \bigg(a_{n,q} + \int_0^1 \cdots \int_{t_{q-1}}^1 \EE\big[ \ind_{A_{n,t_1} \cap \cdots \cap A_{n,t_q}} \cdot |\widehat{X}_{n,\underline{t_q}_n+1/n}-\xi|^{10(1+\ell)^2} \big] \, \dif t_q \dots \, \dif t_1\bigg) \\
&\leq c_4 \cdot \bigg(\frac{a_{n,q}}{\sqrt{n}} + \frac{a_{n,q-1}}{n^{3/2}} + \cdots + \frac{a_{n,1}}{n^{q-1/2}} \\
&\qquad \qquad + \frac{1}{n^{q-1/2}} \cdot \int_0^1 \EE\big[ \ind_{A_{n,t_1}} \cdot |\widehat{X}_{n,\underline{t_1}_n+1/n}-\xi|^{2 \cdot 5^q(1+\ell)^{2q}} \big] \, \dif t_1\bigg) \\
&\leq c_4 \cdot \bigg(\frac{a_{n,q}}{\sqrt{n}} + \frac{a_{n,q-1}}{n^{3/2}} + \cdots + \frac{a_{n,1}}{n^{q-1/2}} \\
&\qquad \qquad + \frac{c_5}{n^{q+1/2}} \cdot \Big(1 + |\xi|^{2 \cdot 5^q(1+\ell)^{2q}} + \sup_{j\in\N} \EE\big[ \|\widehat{X}_j\|_{\infty}^{2 \cdot 5^q(1+\ell)^{2q}} \big]\Big)\bigg).
\end{align*}
Employing Lemma \ref{tamprop1} and the induction hypothesis yields the validity of \eqref{prop1} for $p = q+1$, which finishes the proof of the proposition.
\end{proof}

\subsection{The transformed equation}

In this subsection, we provide some estimates for the time-continuous tamed Euler scheme associated to the SDE \eqref{sde2}.

For every $n\in\N$, let $\widehat{Z}_n = (\widehat{X}_{n,t})_{t \in [0,1]}$ denote the time-continuous tamed Euler scheme with step-size $1/n$ associated to the SDE \eqref{sde2}, i.e. $\widehat{Z}_{n,0} = G(x_0)$ and
\[
\widehat{Z}_{n,t} = \widehat{Z}_{n,i/n} + \frac{\widetilde{\mu}(\widehat{Z}_{n,i/n})}{1 + 1/n \cdot |\widetilde{\mu}(\widehat{Z}_{n,i/n})|} \cdot (t-i/n) + \widetilde{\sigma}(\widehat{Z}_{n,i/n}) \cdot (W_t-W_{i/n})
\]
for $t \in (i/n,(i+1)/n]$ and $i \in \{ 0,\dots,n-1 \}$.

\begin{lemma}
Let Assumption \ref{assum2} hold. Then $\widetilde{\mu}$ is locally Lipschitz continuous, $\widetilde{\sigma}$ is Lipschitz continuous, and there exists $c \in (0,\infty)$ such that for all $x,y\in\R$,
\begin{align*}
&(x-y) \cdot (\widetilde{\mu}(x)-\widetilde{\mu}(y)) 
\leq c \cdot |x-y|^2, \\
&|\widetilde{\mu}(x)-\widetilde{\mu}(y)| 
\leq c \cdot \big(1+|x|^{\ell}+|y|^{\ell}\big) \cdot |x-y|.
\end{align*}
\end{lemma}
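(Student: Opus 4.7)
The plan is to exploit that, by Lemma~\ref{transform}(iii), $G(x)=x$ for all $|x|$ exceeding some threshold $N$, so that $\widetilde{\mu}=\mu$ and $\widetilde{\sigma}=\sigma$ outside $[-N,N]$; inside the compact region we will rely on the local Lipschitz continuity of the transformed coefficients and the continuity assertion supplied by Lemma~\ref{transform}(ii).

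First I would establish the local Lipschitz continuity of $\widetilde{\mu}$. Since $G^{-1}$ is globally Lipschitz by Lemma~\ref{transform}(i), it is enough to show that $u\mapsto G^{\prime}(u)\mu(u)+\tfrac12 G^{\dprime}(u)\sigma^2(u)$ is locally Lipschitz on $\R$. Away from the points $\xi_i$ this is immediate because $G^{\prime}$ and $G^{\dprime}$ are Lipschitz on each piece $(\xi_{i-1},\xi_i)$ by Lemma~\ref{transform}(ii) and $\mu,\sigma$ are locally Lipschitz there by (B1),(B2). At a point $\xi_i$, the continuity of the composite function (Lemma~\ref{transform}(ii)) combined with the one-sided local Lipschitz bounds on the two adjacent intervals yields local Lipschitz continuity on a neighborhood of $\xi_i$, using the elementary fact that a function which is continuous on an interval and Lipschitz on two half-intervals meeting at an interior point is itself Lipschitz there.

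For the global Lipschitz continuity of $\widetilde{\sigma}$, since $G^{-1}$ is globally Lipschitz it suffices to prove that $F(u):=G^{\prime}(u)\sigma(u)$ is globally Lipschitz on $\R$. Decomposing
\[
|F(u)-F(v)| \leq \Big(\sup_{w\in\R}G^{\prime}(w)\Big)\cdot |\sigma(u)-\sigma(v)| + |\sigma(v)|\cdot|G^{\prime}(u)-G^{\prime}(v)|,
\]
the first summand is controlled by the global Lipschitz continuity of $\sigma$ from (B2). For the second summand we exploit that $G^{\prime}\equiv 1$ outside $[-N,N]$, so that $G^{\prime}(u)-G^{\prime}(v)=0$ whenever both $|u|,|v|>N$; in the remaining cases at least one argument lies in $[-N,N]$, where $|\sigma|$ is bounded, and the Lipschitz continuity of $G^{\prime}$ finishes the estimate. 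If only one endpoint is outside $[-N,N]$, the intermediate point $\pm N$ is inserted on the segment from $u$ to $v$ to reduce matters to the two clean cases just treated.

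For the global one-sided Lipschitz estimate and the polynomial Lipschitz growth of $\widetilde{\mu}$, I would split into cases according to where $x$ and $y$ lie relative to $[-N,N]$. If $x,y$ lie outside $[-N,N]$ on the same side, then $\widetilde{\mu}=\mu$ on a single piece and (B3) gives both conclusions directly. If $x,y\in[-N,N]$, the local Lipschitz continuity established in Step~1 yields a global Lipschitz constant on $[-N,N]$, from which both inequalities follow with trivial constants (using $|x-y|\leq 2N$ where needed). For the mixed case, say $x\in[-N,N]$ and $y>N$, I would insert $N$ as an intermediate point and write $\widetilde{\mu}(x)-\widetilde{\mu}(y)=[\widetilde{\mu}(x)-\widetilde{\mu}(N)]+[\mu(N)-\mu(y)]$; the polynomial-growth bound follows by absorbing $N^{\ell}$ into the constant. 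The delicate step, and the main obstacle, is the one-sided Lipschitz estimate in this mixed case: after splitting $x-y=(x-N)+(N-y)$ one is left with a cross term $(x-N)(\mu(N)-\mu(y))$ whose naive bound via (B3) would introduce the unwanted factor $|y|^{\ell}$. The remedy is a sign analysis: when $\mu(y)\geq\mu(N)$ the one-sided Lipschitz estimate in (B3) on the piece $(\xi_k,\infty)$ gives $\mu(y)-\mu(N)\leq c(y-N)$ with no polynomial factor, and $|x-N|(y-N)\leq(x-y)^2$; when $\mu(y)<\mu(N)$ the product $(x-N)(\mu(N)-\mu(y))$ is non-positive and can be dropped. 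The opposite-tail case $x<-N<N<y$ is handled analogously by inserting both $-N$ and $N$ as intermediate points and combining the same sign analysis on each side.
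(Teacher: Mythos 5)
Your proposal is correct and follows essentially the same route as the paper: both exploit that $G$ is the identity outside a compact set, so that $\widetilde{\mu}=\mu$ on the two tails where (B3) applies directly, while on the remaining compact interval the (local) Lipschitz continuity of $\widetilde{\mu}$ yields the bounds, and likewise $\widetilde{\sigma}$ is treated via $G^{\prime}\equiv 1$ at infinity. The only difference is completeness: the paper leaves the mixed case (one point in a tail, the other in the middle, or opposite tails) implicit, whereas your cross-term sign analysis --- which amounts to noting that $x\mapsto\widetilde{\mu}(x)-cx$ is non-increasing on each closed piece and hence, by continuity of $\widetilde{\mu}$, on all of $\R$ --- spells out exactly the patching step the paper glosses over.
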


\begin{proof}
It is straightforward to check that $\widetilde{\mu}$ is locally Lipschitz continuous and $\widetilde{\sigma}$ is Lipschitz continuous.

By Lemma \ref{transform} we obtain that there exists $n_0\in\N$ such that for all $x\in\R$ with $|x| > n_0$,
\[
\widetilde{\mu}(x) = \mu(x).
\]
Hence, by Lemma \ref{assum2} we obtain that there exists $c_1 \in (0,\infty)$ such
that for all $x,y \in (-\infty,-n_0\wedge\xi_1)$ and all $x,y \in (n_0\vee\xi_k,\infty)$,
\begin{align}\label{trcoe2}
&(x-y) \cdot (\widetilde{\mu}(x)-\widetilde{\mu}(y)) 
\leq c_1 \cdot |x-y|^2, \notag \\
&|\widetilde{\mu}(x)-\widetilde{\mu}(y)| 
\leq c_1 \cdot \big(1+|x|^{\ell}+|y|^{\ell}\big) \cdot |x-y|.
\end{align}

Using the local Lipschitz continuity of $\widetilde{\mu}$ we obtain that there exists $c_2 \in (0,\infty)$ such that for all $x,y \in [-n_0\wedge\xi_1,n_0\vee\xi_k]$,
\[
|\widetilde{\mu}(x)-\widetilde{\mu}(y)| \leq c_2 \cdot |x-y|,
\]
which, together with \eqref{trcoe2}, completes the proof.
\end{proof}

\begin{lemma}\label{trtamprop}
Let Assumption \ref{assum2} hold and $p \in [1,\infty)$. Then there exists $c \in (0,\infty)$ such that for all $n\in\N$, all $\delta \in [0,1]$ and all $t \in [0,1-\delta]$,
\[
\Big(\EE\Big[ \sup_{s \in [t,t+\delta]} |\widehat{Z}_{n,s}-\widehat{Z}_{n,t}|^p \Big]\Big)^{1/p} \leq c \cdot \sqrt{\delta}.
\]
In particular,
\[
\sup_{n\in\N} \EE\big[ \|\widehat{Z}_n\|_{\infty}^p \big] < \infty.
\]
\end{lemma}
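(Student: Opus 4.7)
The plan is to mirror the proof of Lemma \ref{tamprop1}, substituting the triple $(\mu,\sigma,x_0)$ by $(\widetilde{\mu},\widetilde{\sigma},G(x_0))$. By the preceding lemma, $\widetilde{\mu}$ is globally locally Lipschitz, globally one-sided Lipschitz, and its Lipschitz constant has polynomial growth of degree $\ell$, while $\widetilde{\sigma}$ is globally Lipschitz on $\R$. Thus $(\widetilde{\mu},\widetilde{\sigma})$ satisfies the (B1)--(B3)-type conditions of Assumption \ref{assum2} globally, in effect with no discontinuity points at all. Combined with Lipschitz continuity of $G$ from Lemma \ref{transform}(i) and the moment assumption $\EE[|x_0|^p]<\infty$, this gives $\EE[|G(x_0)|^p]<\infty$ for every $p\in[1,\infty)$. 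Consequently, the entire apparatus of Lemmas \ref{lambda}--\ref{tamprop0} transfers verbatim to $\widehat{Z}_n$, once the constant $\lambda$ and the auxiliary objects $\alpha_{n,i}$, $D_{n,i}$, $\Omega_{n,i}$ are redefined with the transformed coefficients and initial value.

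Concretely, the first step is to record the analog of Lemma \ref{tamprop0}:
\[
\sup_{n\in\N}\sup_{i\in\{0,1,\dots,n\}}\EE\big[|\widehat{Z}_{n,i/n}|^p\big]<\infty,\qquad p\in[1,\infty),
\]
obtained by rerunning the proofs of Lemmas \ref{lambda}--\ref{tamprop0} with $(\widetilde{\mu},\widetilde{\sigma},G(x_0))$ in place of $(\mu,\sigma,x_0)$.

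The second step is the increment estimate itself. Without loss of generality take $p\in[2,\infty)$. Starting from the integral representation of $\widehat{Z}_n$ analogous to \eqref{intrep}, I would apply inequality \eqref{ine}, H\"older's inequality for the drift integral, and the martingale moment inequality \cite[Theorem 1.7.2]{XM} for the stochastic integral, combined with the bounds $|\widetilde{\mu}(x)|\leq c(1+|x|^{1+\ell})$ (extracted from the polynomial-Lipschitz growth by setting $y=0$) and $|\widetilde{\sigma}(x)|\leq c(1+|x|)$ (from global Lipschitz continuity). Exactly as in the proof of Lemma \ref{tamprop1} this yields
\[
\EE\Big[\sup_{s\in[t,t+\delta]}|\widehat{Z}_{n,s}-\widehat{Z}_{n,t}|^p\Big]
\leq c\delta^p\Big(1+\sup_{t\in[0,1]}\EE\big[|\widehat{Z}_{n,\utn}|^{(1+\ell)p}\big]\Big)
+c\delta^{p/2}\Big(1+\sup_{t\in[0,1]}\EE\big[|\widehat{Z}_{n,\utn}|^p\big]\Big).
\]
Combined with the moment bound from step one, this delivers the desired $c\sqrt{\delta}$ rate, and the bound on $\EE[\|\widehat{Z}_n\|_\infty^p]$ follows by choosing $t=0$ and $\delta=1$.

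There is no genuine obstacle beyond bookkeeping: all the heavy lifting was already carried out in Section 3.1 for $\widehat{X}_n$, and the only verification is that the transformed data $(\widetilde{\mu},\widetilde{\sigma},G(x_0))$ meets the same structural hypotheses as $(\mu,\sigma,x_0)$, which is immediate from the preceding lemma together with Lemma \ref{transform}.
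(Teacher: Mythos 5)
Your proposal is correct and follows exactly the route the paper intends: the paper's proof of this lemma consists of the single reference ``See the proof of Lemma \ref{tamprop1}'', i.e.\ rerun that argument with $(\widetilde{\mu},\widetilde{\sigma},G(x_0))$ in place of $(\mu,\sigma,x_0)$, which is precisely what you do, after verifying (as the paper leaves implicit) that the transformed data satisfy the same structural hypotheses via the preceding lemma and Lemma \ref{transform}. No gaps; your write-up just makes explicit the bookkeeping the paper omits.
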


\begin{proof}
See the proof of Lemma \ref{tamprop1}.
\end{proof}

\begin{lemma}\label{trtamprop2}
Let Assumption \ref{assum2} hold and $p \in [1,\infty)$. Then there exists $c \in (0,\infty)$ such that for all $n\in\N$,
\[
\big(\EE\big[ \|Z-\widehat{Z}_n\|_{\infty}^p \big]\big)^{1/p} \leq \frac{c}{\sqrt{n}}.
\]
\end{lemma}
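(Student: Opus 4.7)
The plan is to invoke the $L^p$ strong convergence framework for the tamed Euler scheme developed in \cite{MH}. In the preceding lemma it was established that $\widetilde{\mu}$ satisfies a one-sided Lipschitz condition and a polynomially growing Lipschitz condition, and that $\widetilde{\sigma}$ is globally Lipschitz. Together with $\EE[|G(x_0)|^q] < \infty$ for all $q \in [1,\infty)$ (a consequence of Lemma \ref{transform}(i) and Assumption \ref{assum2}), this places the SDE \eqref{sde2} precisely inside the setting of \cite{MH}. Hence the result is essentially an application of that theory.

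Without loss of generality assume $p \in [2,\infty)$. Subtracting the integral forms of $Z$ and $\widehat{Z}_n$, one writes
\[
Z_t - \widehat{Z}_{n,t}
= \int_0^t \bigg(\widetilde{\mu}(Z_s) - \frac{\widetilde{\mu}(\widehat{Z}_{n,\usn})}{1 + 1/n \cdot |\widetilde{\mu}(\widehat{Z}_{n,\usn})|}\bigg) \dif s
+ \int_0^t \big(\widetilde{\sigma}(Z_s) - \widetilde{\sigma}(\widehat{Z}_{n,\usn})\big) \dif W_s.
\]
I would then apply the It\^o formula to $|Z_t - \widehat{Z}_{n,t}|^p$ and split the drift difference into three pieces:
\[
\widetilde{\mu}(Z_s) - \widetilde{\mu}(\widehat{Z}_{n,s})
\;+\; \widetilde{\mu}(\widehat{Z}_{n,s}) - \widetilde{\mu}(\widehat{Z}_{n,\usn})
\;+\; \widetilde{\mu}(\widehat{Z}_{n,\usn}) - \frac{\widetilde{\mu}(\widehat{Z}_{n,\usn})}{1 + 1/n \cdot |\widetilde{\mu}(\widehat{Z}_{n,\usn})|}.
\]
The first piece, when paired with $Z_s - \widehat{Z}_{n,s}$, is handled by the one-sided Lipschitz bound and yields a term of the form $c |Z_s - \widehat{Z}_{n,s}|^p$ that is absorbed via Gronwall. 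The second piece is estimated through the polynomial growth Lipschitz bound, H\"older's inequality, and the $\sqrt{1/n}$ one-step increment control from Lemma \ref{trtamprop}. The third piece is the taming defect, bounded by $n^{-1}|\widetilde{\mu}(\widehat{Z}_{n,\usn})|^2$, and controlled again by Lemma \ref{trtamprop} together with the polynomial growth of $\widetilde{\mu}$.

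The diffusion term is handled by the Burkholder--Davis--Gundy inequality and the Lipschitz property of $\widetilde{\sigma}$: the $\widetilde{\sigma}(Z_s) - \widetilde{\sigma}(\widehat{Z}_{n,s})$ contribution is absorbed into Gronwall, while the $\widetilde{\sigma}(\widehat{Z}_{n,s}) - \widetilde{\sigma}(\widehat{Z}_{n,\usn})$ contribution is again $O(1/\sqrt{n})$ by Lemma \ref{trtamprop}. Combining these estimates, taking the supremum over $t \in [0,1]$ (using BDG once more), and applying a Gronwall-type argument gives $(\EE[\|Z - \widehat{Z}_n\|_\infty^p])^{1/p} \leq c / \sqrt{n}$. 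The main technical obstacle is the bookkeeping of uniform-in-$n$ moment bounds for $\widehat{Z}_n$ when estimating the taming defect and the polynomial-growth Lipschitz term; all these bounds are supplied by Lemma \ref{trtamprop} and the analogue for the transformed equation of the moment estimates in \cite[Sect.~3]{MH}. Since the proof is a direct adaptation of the argument in \cite{MH} to the transformed SDE, one may simply refer to the proof of \cite[Theorem 3.15]{MH} (or the corresponding $L^p$ convergence statement) to conclude.
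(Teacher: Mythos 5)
Your proposal is correct and matches the paper's approach: the paper's proof consists solely of the citation ``See the proof of \cite[Theorem 1.1]{MH}'', relying—as you do—on the fact that the preceding lemma places $\widetilde{\mu}$ and $\widetilde{\sigma}$ exactly within the hypotheses of the tamed Euler convergence theorem in \cite{MH}. Your additional sketch of the internal argument (It\^o on $|Z_t-\widehat{Z}_{n,t}|^p$, the three-way drift decomposition, BDG plus Gronwall) is a reasonable summary of how that cited proof proceeds, so nothing is missing.
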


\begin{proof}
See the proof of \cite[Theorem 1.1]{MH}.
\end{proof}

Finally, we provide an estimate for the transformed time-continuous tamed Euler scheme $G\circ\widehat{X}_n = (G(\widehat{X}_{n,t}))_{t \in [0,1]}$.

\begin{lemma}\label{trtamprop3}
Let Assumption \ref{assum2} hold and $p \in [1,\infty)$. Then there exists $c \in (0,\infty)$ such that for all $n\in\N$,
\[
\EE\big[ \|G\circ\widehat{X}_n\|_{\infty}^p \big] \leq c.
\]
\end{lemma}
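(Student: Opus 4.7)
The plan is to reduce this estimate directly to Lemma \ref{tamprop1}, using the fact that $G$ is essentially of linear growth. By Lemma \ref{transform}(iii) there exists $n_0 \in \N$ such that $G(x) = x$ for all $x \in \R$ with $|x| > n_0$, and by Lemma \ref{transform}(i) $G$ is continuous, hence bounded on the compact interval $[-n_0,n_0]$. Together, these imply the existence of $c_0 \in (0,\infty)$ such that $|G(x)| \leq c_0 \cdot (1+|x|)$ for all $x \in \R$. (Alternatively, one may argue directly from the Lipschitz continuity of $G$ in Lemma \ref{transform}(i), which yields the same linear bound $|G(x)| \leq |G(0)| + L |x|$ where $L$ is the Lipschitz constant of $G$.)

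Applying this bound pointwise gives
\[
\|G \circ \widehat{X}_n\|_{\infty} = \sup_{t \in [0,1]} |G(\widehat{X}_{n,t})| \leq c_0 \cdot \big(1 + \|\widehat{X}_n\|_{\infty}\big)
\]
for every $n \in \N$. Using the elementary inequality \eqref{ine}, I would then raise to the $p$-th power and take expectations to conclude
\[
\EE\big[ \|G \circ \widehat{X}_n\|_{\infty}^p \big] \leq 2^{p-1} c_0^p \cdot \Big( 1 + \EE\big[ \|\widehat{X}_n\|_{\infty}^p \big] \Big).
\]
The uniform-in-$n$ bound on $\EE[\|\widehat{X}_n\|_\infty^p]$ furnished by Lemma \ref{tamprop1} then yields a constant $c \in (0,\infty)$, independent of $n$, such that $\EE[\|G \circ \widehat{X}_n\|_{\infty}^p] \leq c$, which is the claimed estimate.

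There is no real obstacle here: the result is a direct consequence of the linear-growth property of $G$ (which in fact degenerates to the identity outside a compact set) combined with the already-established $p$-th moment bounds for the tamed Euler scheme. The only point requiring a brief verification is that the two items in Lemma \ref{transform} do combine to give the bound $|G(x)| \leq c_0(1+|x|)$, but this is immediate from compactness of $[-n_0,n_0]$.
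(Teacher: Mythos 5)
Your proof is correct and is essentially the argument the paper intends by pointing to \cite[Lemma~10]{TM}: the Lipschitz continuity of $G$ from Lemma~\ref{transform}(i) gives the linear bound $|G(x)|\leq |G(0)|+L|x|$, and combining this with the uniform-in-$n$ moment bound $\sup_{n\in\N}\EE[\|\widehat{X}_n\|_\infty^p]<\infty$ from Lemma~\ref{tamprop1} immediately yields the claim.
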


\begin{proof}
See the proof of \cite[Lemma 10]{TM}.
\end{proof}

\subsection{\texorpdfstring{$L_{p}$-error estimate}{Lp-error estimate}}

We are now ready to establish the main result of this article.

\begin{theorem}\label{thm}
Let Assumption \ref{assum2} hold and $p \in [1,\infty)$. Then there exists $c \in (0,\infty)$ such that for all $n\in\N$,
\[
\big(\EE\big[ \|X-\widehat{X}_n\|_{\infty}^p \big]\big)^{1/p} \leq \frac{c}{\sqrt{n}}.
\]
\end{theorem}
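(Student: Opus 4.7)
The plan is to use the transformation $G$ from Lemma \ref{transform} to reduce the problem to comparing $\widehat{Z}_n$ with $G\circ\widehat{X}_n$. Since $X = G^{-1}\circ Z$ and $G^{-1}$ is globally Lipschitz by Lemma \ref{transform}(i),
\[
\|X-\widehat{X}_n\|_\infty \leq \|G^{-1}\|_{\mathrm{Lip}}\cdot\big(\|Z-\widehat{Z}_n\|_\infty + \|\widehat{Z}_n - G\circ\widehat{X}_n\|_\infty\big).
\]
The $L_p$-norm of the first summand is $O(n^{-1/2})$ by Lemma \ref{trtamprop2}, so it suffices to establish the same rate for the second summand.

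To analyse $\widehat{Z}_n - G\circ\widehat{X}_n$ I would apply a suitable version of It\^{o}'s formula to $G(\widehat{X}_{n,t})$. Since $G'$ is Lipschitz with Lebesgue density $G''$ that is bounded and piecewise Lipschitz on the intervals $(\xi_{i-1},\xi_i)$, It\^{o}'s formula (applied on the pieces and patched at the finitely many $\xi_i$'s using Lemma \ref{occup} to rule out problems with the occupation at these points) yields
\[
G(\widehat{X}_{n,t}) = G(x_0) + \int_0^t\Big(G'(\widehat{X}_{n,s})\tau_n(\widehat{X}_{n,\usn})+\tfrac12 G''(\widehat{X}_{n,s})\sigma^2(\widehat{X}_{n,\usn})\Big)\,\dif s + \int_0^t G'(\widehat{X}_{n,s})\sigma(\widehat{X}_{n,\usn})\,\dif W_s,
\]
where $\tau_n(x) := \mu(x)/(1+|\mu(x)|/n)$ is the tamed drift. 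Subtracting the integral representation of $\widehat{Z}_{n,t}$ and using the identities $\widetilde{\mu}\circ G = G'\mu+\tfrac12 G''\sigma^2$ and $\widetilde{\sigma}\circ G = G'\sigma$ from Lemma \ref{transform}(ii), the error $E_{n,t} := G(\widehat{X}_{n,t}) - \widehat{Z}_{n,t}$ becomes an integral of drift- and diffusion-difference terms with common frozen argument $\widehat{X}_{n,\usn}$.

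I would then split each integrand into three kinds of pieces: (a) a part that is Lipschitz in $E_{n,\usn}$, driven by the local Lipschitz behaviour of $\widetilde{\mu}$ and $\widetilde{\sigma}$ at $G(\widehat{X}_{n,\usn})$ versus $\widehat{Z}_{n,\usn}$, to be absorbed by a Gronwall argument; (b) a part bounded by $|\widehat{X}_{n,s}-\widehat{X}_{n,\usn}|$ times polynomial factors, which yields $O(n^{-1/2})$ via Lemmas \ref{tamprop1} and \ref{trtamprop} together with the moment bounds of Lemma \ref{tamprop0}; and (c) a taming-defect part coming from the mismatch between $\tau_n$ and the taming of $\widetilde{\mu}$ under the change of variables, which is of order $n^{-1}$ thanks to the same moment bounds. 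Away from the $G''$ jump structure these estimates are routine and mimic the proofs of Lemma \ref{trtamprop2} and the analogous bounds in \cite{MH}, with Burkholder--Davis--Gundy applied to the stochastic integrals and continuous Gronwall closing the loop.

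The main obstacle is the $G''$ contribution on the set where $\widehat{X}_{n,s}$ and $\widehat{X}_{n,\usn}$ lie on opposite sides of some discontinuity point $\xi_i$ of $\mu$: there $G''$ is only bounded, not locally Lipschitz, so $G''(\widehat{X}_{n,s})-G''(\widehat{X}_{n,\usn})$ does not decay with the step size. This is exactly where Proposition \ref{prop} enters: since $\sigma(\xi_i)\neq 0$ for every $i\in\{1,\dots,k\}$, the proposition bounds the $L_p$-norm of $\int_0^1 \ind_{\{(\widehat{X}_{n,t}-\xi_i)(\widehat{X}_{n,\utn}-\xi_i)\leq 0\}}\,\dif t$ by $c\,n^{-1/2}$. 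Combined with the boundedness of $G''$ and the linear growth of $\sigma$, summing over the finitely many $\xi_i$'s, the exceptional-set contribution to $E_n$ is itself $O(n^{-1/2})$ in $L_p$. Combining this with (a)--(c) and invoking a Gronwall argument yields $(\EE[\|E_n\|_\infty^p])^{1/p} \leq c\,n^{-1/2}$, and the first display then completes the proof.
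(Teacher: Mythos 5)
Your proposal follows essentially the same route as the paper: reduce via $G^{-1}$ to comparing $G\circ\widehat{X}_n$ with $\widehat{Z}_n$, invoke Lemma \ref{trtamprop2} for $Z-\widehat{Z}_n$, apply It\^o to $G(\widehat{X}_{n,t})$, isolate the $G^{\dprime}$-jump contribution and control it via Proposition \ref{prop}, and close with Gronwall. One point worth sharpening: item (a) in your split cannot be a literal Lipschitz bound on the drift difference, since $\widetilde{\mu}$ grows superlinearly and is only one-sided Lipschitz; as in the paper (and in \cite{MH}), the Gronwall term must come from applying It\^o to $|G(\widehat{X}_{n,t})-\widehat{Z}_{n,t}|^2$ so that the pairing $E_{n,s}\cdot(\widetilde{\mu}(G(\widehat{X}_{n,s}))-\widetilde{\mu}(\widehat{Z}_{n,s}))$ lets the one-sided condition produce $c|E_{n,s}|^2$ — your wording suggests tracking $E_{n,t}$ directly, which would not absorb this term.
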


\begin{proof}
Without loss of generality we may assume that $p \in [4,\infty)$. For every $n\in\N$ we define a function $u_n : [0,1] \to [0,\infty)$ by
\[
u_n(t) = \EE\Big[ \sup_{s \in [0,t]} |G(\widehat{X}_{n,s})-\widehat{Z}_{n,s}|^p \Big].
\]
Note that the functions $u_n, n\in\N$, are well-defined and bounded due to Lemma \ref{trtamprop} and Lemma \ref{trtamprop3}.

Below we show that there exists $c \in (0,\infty)$ such that for all $n\in\N$ and all $t \in [0,1]$,
\begin{equation}\label{thm1}
u_n(t) \leq c \cdot \bigg(\frac{1}{n^{p/2}} + \sum_{i=1}^k \EE\bigg[\bigg|\int_0^t \ind_{\{ (\widehat{X}_{n,s}-\xi_i) \cdot (\widehat{X}_{n,\usn}-\xi_i) \leq 0 \}} \, \dif s\bigg|^p\bigg] + \int_0^t u_n(s) \, \dif s\bigg).
\end{equation}
Using Lemma \ref{prop} we conclude from \eqref{thm1} that there exists $c \in (0,\infty)$ such that for all $n\in\N$ and all $t \in [0,1]$,
\[
u_n(t) \leq c \cdot \bigg(\frac{1}{n^{p/2}} + \int_0^t u_n(s) \, \dif s\bigg).
\]
By the Gronwall inequality it then follows that there exists $c \in (0,\infty)$ such that for all $n\in\N$,
\begin{equation}\label{thm2}
u_n(1) \leq \frac{c}{n^{p/2}}.
\end{equation}
Using the fact that $G^{-1}$ is Lipschitz continuous, see Lemma \ref{transform}(i), as well as Lemma \ref{trtamprop2} and \eqref{thm2} we conclude that there exists $c_1,c_2 \in (0,\infty)$ such that for all $n\in\N$,
\[
\EE\big[ \|X-\widehat{X}_n\|_{\infty}^p \big] \leq c_1 \cdot \EE\big[ \|Z-G\circ\widehat{X}_n\|_{\infty}^p \big] \leq 2^p \cdot c_1 \cdot \big(\EE\big[ \|Z-\widehat{Z}_n\|_{\infty}^p \big] + u_n(1)\big) \leq \frac{c_2}{n^{p/2}},
\]
which yields the statement of Theorem \ref{thm}.

It remains to prove \eqref{thm1}. Let $n\in\N$. Clearly, for every $t \in [0,1]$,
\[
\widehat{Z}_{n,t} = G(x_0) + \int_0^t \frac{\widetilde{\mu}(\widehat{Z}_{n,\usn})}{1 + 1/n \cdot |\widetilde{\mu}(\widehat{Z}_{n,\usn})|} \, \dif s + \int_0^t \widetilde{\sigma}(\widehat{Z}_{n,\usn}) \, \dif W_s.
\]
Since $G^{\prime}$ is absolutely continuous, we may apply the It\^{o} formula, see e.g. \cite[Problem 3.7.3]{IK}, to obtain that $\PP$-a.s. for all $t \in [0,1]$,
\begin{align*}
G(\widehat{X}_{n,t}) 
&= G(x_0) + \int_0^t \bigg( G^{\prime}(\widehat{X}_{n,s}) \cdot \frac{\mu(\widehat{X}_{n,\usn})}{1 + 1/n \cdot |\mu(\widehat{X}_{n,\usn})|} + \frac12 G^{\dprime}(\widehat{X}_{n,s}) \cdot \sigma^2(\widehat{X}_{n,\usn}) \bigg) \, \dif s \\
&\quad + \int_0^t G^{\prime}(\widehat{X}_{n,s}) \cdot \sigma(\widehat{X}_{n,\usn}) \, \dif W_s.
\end{align*}
The It\^{o} formula gives that $\PP$-a.s. for all $t \in [0,1]$,
\[
|G(\widehat{X}_{n,t})-\widehat{Z}_{n,t}|^2 = \sum_{i=1}^8 V_{n,i,t},
\]
where
\begin{align*}
V_{n,1,t} &= \int_0^t |G^{\prime}(\widehat{X}_{n,s}) \cdot \sigma(\widehat{X}_{n,\usn}) - \widetilde{\sigma}(\widehat{Z}_{n,\usn}))|^2 \, \dif s, \\
V_{n,2,t} &= \int_0^t 2\big(G(\widehat{X}_{n,s})-\widehat{Z}_{n,s}\big) \cdot \big(G^{\prime}(\widehat{X}_{n,s}) \cdot \sigma(\widehat{X}_{n,\usn}) - \widetilde{\sigma}(\widehat{Z}_{n,\usn})\big) \, \dif W_s, \\
V_{n,3,t} &= \int_0^t 2\big(G(\widehat{X}_{n,s})-\widehat{Z}_{n,s}\big) \cdot \big(G^{\prime}(\widehat{X}_{n,s}) - G^{\prime}(\widehat{X}_{n,\usn})\big) \cdot \frac{\mu(\widehat{X}_{n,\usn})}{1 + 1/n \cdot |\mu(\widehat{X}_{n,\usn})|} \, \dif s, \\
V_{n,4,t} &= \int_0^t 2\big(G(\widehat{X}_{n,s})-\widehat{Z}_{n,s}\big) \cdot \big(\big(\widetilde{\mu}(G(\widehat{X}_{n,\usn})) - \widetilde{\mu}(\widehat{Z}_{n,\usn})\big) - \big(\widetilde{\mu}(G(\widehat{X}_{n,s})) - \widetilde{\mu}(\widehat{Z}_{n,s})\big)\big) \, \dif s, \\
V_{n,5,t} &= - \int_0^t 2\big(G(\widehat{X}_{n,s})-\widehat{Z}_{n,s}\big) \cdot G^{\prime}(\widehat{X}_{n,\usn}) \cdot \mu(\widehat{X}_{n,\usn}) \cdot \frac{1/n \cdot |\mu(\widehat{X}_{n,\usn})|}{1 + 1/n \cdot |\mu(\widehat{X}_{n,\usn})|} \, \dif s, \\
V_{n,6,t} &= \int_0^t 2\big(G(\widehat{X}_{n,s})-\widehat{Z}_{n,s}\big) \cdot \widetilde{\mu}(\widehat{Z}_{n,\usn}) \cdot \frac{1/n \cdot |\widetilde{\mu}(\widehat{Z}_{n,\usn})|}{1 + 1/n \cdot |\widetilde{\mu}(\widehat{Z}_{n,\usn})|} \, \dif s, \\
V_{n,7,t} &= \int_0^t 2\big(G(\widehat{X}_{n,s})-\widehat{Z}_{n,s}\big) \cdot \big(\widetilde{\mu}(G(\widehat{X}_{n,s})) - \widetilde{\mu}(\widehat{Z}_{n,s})\big) \, \dif s, \\
V_{n,8,t} &= \int_0^t \big(G(\widehat{X}_{n,s})-\widehat{Z}_{n,s}\big) \cdot \big(G^{\dprime}(\widehat{X}_{n,s}) \cdot \sigma^2(\widehat{X}_{n,\usn}) - G^{\dprime}(\widehat{X}_{n,\usn}) \cdot \sigma^2(\widehat{X}_{n,\usn})\big) \, \dif s.
\end{align*}

Using the H\"{o}lder inequality, the Burkholder--Davis--Gundy inequality, Lemma \ref{tamprop1}, \ref{trtamprop} and \eqref{ine2} we conclude that there exists $c \in (0,\infty)$ such that for all $n\in\N$ and all $t \in [0,1]$,
\begin{align}\label{thm3}
&\EE\Big[ \sup_{s \in [0,t]} |V_{n,i,s}|^{p/2} \Big] \leq \frac{c}{n^{p/2}} + c \cdot \int_0^t u_n(s) \, \dif s, \quad 1 \leq i \leq 4, \notag \\
&\EE\Big[ \sup_{s \in [0,t]} |V_{n,i,s}|^{p/2} \Big] \leq \frac{c}{n^p} + c \cdot \int_0^t u_n(s) \, \dif s, \quad 5 \leq i \leq 6.
\end{align}

Using the one-sided Lipschitz continuity of $\widetilde{\mu}$, we see that there exists $c_1 \in (0,\infty)$ such that for all $n\in\N$ and all $t \in [0,1]$,
\[
V_{n,7,t} \leq \widetilde{V}_{n,7,t}
= c_1 \cdot \int_0^t |G(\widehat{X}_{n,s})-\widehat{Z}_{n,s}|^2 \, \dif s.
\]
Using the H\"{o}lder inequality we conclude that there exists $c_2 \in (0,\infty)$ such that for all $n\in\N$ and all $t \in [0,1]$,
\begin{equation}\label{thm4}
\EE\Big[ \sup_{s \in [0,t]} |\widetilde{V}_{n,7,s}|^{p/2} \Big] \leq c_2 \cdot \int_0^t u_n(s) \, \dif s.
\end{equation}

For estimating $\EE[ \sup_{s \in [0,t]} |V_{n,8,s}|^{p/2} ]$ we put
\[
B = \bigg( \bigcup_{i=1}^{k+1} (\xi_{i-1},\xi_i)^2 \bigg)^c
\]
and we note that 
\[
B = \bigcup_{i=1}^{k} \{ (x,y) \in \R^2 : (x-\xi_i) \cdot (y-\xi_i) \leq 0 \}.
\]
Using Lemma \ref{transform}(ii) and the Lipschitz continuity of $\sigma$, we see that there exists $c \in (0,\infty)$ such that for all $x,y\in\R$,
\[
|G^{\dprime}(x) \cdot \sigma^2(y) - G^{\dprime}(y) \cdot \sigma^2(y)| \leq 
\begin{cases}
c \cdot (1 + y^2) \cdot |x-y|, & (x,y) \in B^c, \\
c \cdot (1 + y^2), & (x,y) \in B.
\end{cases}
\]
Hence, using \eqref{ine} we obtain that there exists $c \in (0,\infty)$ such that for all $n\in\N$ and all $t \in [0,1]$,
\begin{align}\label{thm5}
&\bigg|\int_0^t |G^{\dprime}(\widehat{X}_{n,s}) \cdot \sigma^2(\widehat{X}_{n,\usn}) - G^{\dprime}(\widehat{X}_{n,\usn}) \cdot \sigma^2(\widehat{X}_{n,\usn})| \, \dif s\bigg|^p \notag \\
&\leq c \cdot \bigg(\bigg|\int_0^t \big(1 + \widehat{X}_{n,\usn}^2\big) \cdot |\widehat{X}_{n,s}-\widehat{X}_{n,\usn}| \, \dif s\bigg|^p + \bigg|\int_0^t \big(1 + \widehat{X}_{n,\usn}^2\big) \cdot \ind_{\{ (\widehat{X}_{n,s},\widehat{X}_{n,\usn}) \in B \}} \, \dif s\bigg|^p\bigg).
\end{align}
Using the H\"{o}lder inequality and Lemma \ref{tamprop1} we obtain that there exists $c \in (0,\infty)$ such that for all $n\in\N$ and all $t \in [0,1]$,
\begin{equation}\label{thm6}
\EE\bigg[\bigg|\int_0^t \big(1 + \widehat{X}_{n,\usn}^2\big) \cdot |\widehat{X}_{n,s}-\widehat{X}_{n,\usn}| \, \dif s\bigg|^p\bigg] \leq \frac{c}{n^{p/2}}.
\end{equation}
Furthermore, for all $i \in \{ 1,\dots,k \}$, all $n\in\N$ and all $s \in [0,1]$,
\begin{align*}
|\widehat{X}_{n,\usn}| \cdot \ind_{\{ (\widehat{X}_{n,s}-\xi_i) \cdot (\widehat{X}_{n,\usn}-\xi_i) \leq 0 \}} 
&\leq \big(|\xi_i| + |\widehat{X}_{n,\usn}-\xi_i|\big) \cdot \ind_{\{ (\widehat{X}_{n,s}-\xi_i) \cdot (\widehat{X}_{n,\usn}-\xi_i) \leq 0 \}} \\
&\leq \big(|\xi_i| + |\widehat{X}_{n,\usn}-\widehat{X}_{n,s}|\big) \cdot \ind_{\{ (\widehat{X}_{n,s}-\xi_i) \cdot (\widehat{X}_{n,\usn}-\xi_i) \leq 0 \}},
\end{align*}
which yields that for all $n\in\N$ and all $s \in [0,1]$,
\[
\big(1 + \widehat{X}_{n,\usn}^2\big) \cdot \ind_{\{ (\widehat{X}_{n,s},\widehat{X}_{n,\usn}) \in B \}} \leq \Big(1 + 2\max_{1 \leq i \leq k} \xi_i^2\Big) \cdot \sum_{i=1}^k \ind_{\{ (\widehat{X}_{n,s}-\xi_i) \cdot (\widehat{X}_{n,\usn}-\xi_i) \leq 0 \}} + 2\big(\widehat{X}_{n,\usn}-\widehat{X}_{n,s}\big)^2.
\]
By the latter inequality and Lemma \ref{tamprop1} we conclude that there exists $c \in (0,\infty)$ such that for all $n\in\N$ and all $t \in [0,1]$,
\begin{align}\label{thm7}
&\EE\bigg[\bigg|\int_0^t \big(1 + \widehat{X}_{n,\usn}^2\big) \cdot \ind_{\{ (\widehat{X}_{n,s},\widehat{X}_{n,\usn}) \in B \}} \, \dif s\bigg|^p\bigg] \notag \\
&\leq c \cdot \sum_{i=1}^k \EE\bigg[\bigg|\int_0^t \ind_{\{ (\widehat{X}_{n,s}-\xi_i) \cdot (\widehat{X}_{n,\usn}-\xi_i) \leq 0 \}} \, \dif s\bigg|^p\bigg] + \frac{c}{n^p}.
\end{align}
Combining \eqref{thm5}, \eqref{thm6} and \eqref{thm7} we obtain that there exists $c \in (0,\infty)$ such that for all $n\in\N$ and all $t \in [0,1]$,
\begin{align*}
&\EE\bigg[\bigg|\int_0^t |G^{\dprime}(\widehat{X}_{n,s}) \cdot \sigma^2(\widehat{X}_{n,\usn}) - G^{\dprime}(\widehat{X}_{n,\usn}) \cdot \sigma^2(\widehat{X}_{n,\usn})| \, \dif s\bigg|^p\bigg] \\
&\leq \frac{c}{n^{p/2}} + c \cdot \sum_{i=1}^k \EE\bigg[\bigg|\int_0^t \ind_{\{ (\widehat{X}_{n,s}-\xi_i) \cdot (\widehat{X}_{n,\usn}-\xi_i) \leq 0 \}} \, \dif s\bigg|^p\bigg].
\end{align*}
Hence, using the H\"{o}lder inequality and the fact that for all $a,b\in\R$ and all $\eps \in (0,\infty)$,
\[
ab \leq \frac{\eps}{2} \cdot a^2 + \frac{1}{2\eps} \cdot b^2,
\]
we obtain that there exists $c \in (0,\infty)$ such that for all $n\in\N$, all $t \in [0,1]$ and all $\eps \in (0,\infty)$,
\begin{align*}
&\EE\Big[ \sup_{s \in [0,t]} |V_{n,8,s}|^{p/2} \Big] \\
&\leq \EE\bigg[ \sup_{s \in [0,t]} |G(\widehat{X}_{n,s})-\widehat{Z}_{n,s}|^{p/2} \cdot \bigg|\int_0^t |G^{\dprime}(\widehat{X}_{n,s}) \cdot \sigma^2(\widehat{X}_{n,\usn}) - G^{\dprime}(\widehat{X}_{n,\usn}) \cdot \sigma^2(\widehat{X}_{n,\usn})| \, \dif s\bigg|^{p/2} \bigg] \\
&\leq \sqrt{u_n(t)} \cdot \bigg(\EE\bigg[ \bigg|\int_0^t |G^{\dprime}(\widehat{X}_{n,s}) \cdot \sigma^2(\widehat{X}_{n,\usn}) - G^{\dprime}(\widehat{X}_{n,\usn}) \cdot \sigma^2(\widehat{X}_{n,\usn})| \, \dif s\bigg|^p \bigg]\bigg)^{1/2} \\
&\leq \frac{\eps}{2} \cdot u_n(t) + \frac{c}{2\eps} \cdot \bigg(\frac{1}{n^{p/2}} + \sum_{i=1}^k \EE\bigg[\bigg|\int_0^t \ind_{\{ (\widehat{X}_{n,s}-\xi_i) \cdot (\widehat{X}_{n,\usn}-\xi_i) \leq 0 \}} \, \dif s\bigg|^p\bigg]\bigg),
\end{align*}
which, together with \eqref{thm3} and \eqref{thm4}, yields that there exists $c \in (0,\infty)$ such that for all $n\in\N$, all $t \in [0,1]$ and all $\eps \in (0,\infty)$,
\[
u_n(t) \leq c \cdot \bigg(\eps \cdot u_n(t) + \frac{1+1/\eps}{n^{p/2}} + \sum_{i=1}^k \EE\bigg[\bigg|\int_0^t \ind_{\{ (\widehat{X}_{n,s}-\xi_i) \cdot (\widehat{X}_{n,\usn}-\xi_i) \leq 0 \}} \, \dif s\bigg|^p\bigg] + \int_0^t u_n(s) \, \dif s\bigg).
\]
Let $\eps = 1/(2c)$ then we obtain that for all $n\in\N$ and all $t \in [0,1]$,
\[
u_n(t) \leq 2c \cdot \bigg(\frac{1+2c}{n^{p/2}} + \sum_{i=1}^k \EE\bigg[\bigg|\int_0^t \ind_{\{ (\widehat{X}_{n,s}-\xi_i) \cdot (\widehat{X}_{n,\usn}-\xi_i) \leq 0 \}} \, \dif s\bigg|^p\bigg] + \int_0^t u_n(s) \, \dif s\bigg).
\]
This finishes the proof of \eqref{thm1}.
\end{proof}

\section{Numerical experiment}

In this section, we provide a numerical example, testing the $L_2$-error rate of the the tamed Euler scheme.

We choose $x_0 = 1$, 
\[
\mu(x) = \ind_{(1,\infty)}(x) - x^5 \quad
\textrm{and} \quad
\sigma(x) = x
\]
for all $x\in\R$. The SDE \eqref{sde1} thus reads as
\begin{equation}\label{sde4}
\begin{aligned}
\dif X_t &= \big(\ind_{(1,\infty)}(X_t) - (X_t)^5\big) \, \dif t + X_t \, \dif W_t, \quad t \geq 0, \\
X_0 &= 1.
\end{aligned}
\end{equation}

\begin{figure}[ht]
\centering
\includegraphics[width=0.6\linewidth]{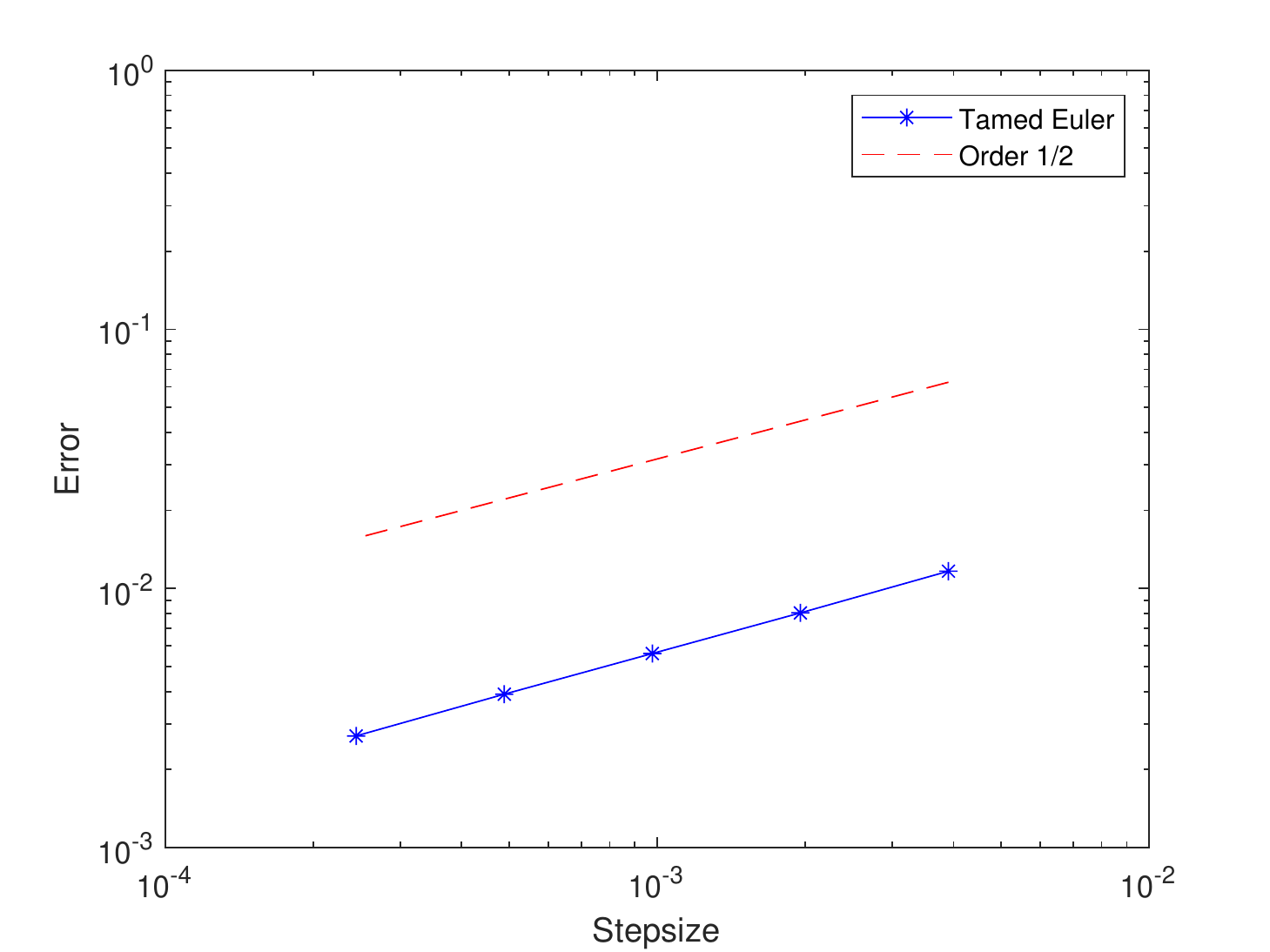}
\caption{Root mean square approximation error versus time-stepsize.}
\label{fig}
\end{figure}

To clearly display the convergence rate of the tamed Euler scheme at the time $T = 1$, we plot our approximation of the root mean-square errors as a function of the stepsize $h = 1/n$ in log-log scale in Figure \ref{fig}, where the expectation is approximated by the mean of $10^5$ independent realizations. Since there is no explicit solution available for the SDE \eqref{sde4}, we use the tamed Euler scheme with a fine stepsize $h = 2^{-16}$ to obtain the reference solution. As predicted, the tamed Euler scheme gives errors that decrease proportional to $h$, indicating that the scheme converges strongly with the standard order $1/2$ to the exact solution of the SDE.

\bibliographystyle{acm}
\bibliography{bibfile}

\end{document}